\newtheorem{theorem}{Theorem}
\newtheorem{lemma}[theorem]{Lemma}
\newtheorem{corollary}[theorem]{Corollary}
\newtheorem{example}[theorem]{Example}
\newcommand\myatop[2]{\genfrac{}{}{0pt}{}{#1}{#2}}
\newcommand{\rank}{{\mathrm{rank}}}
\newcommand{\lcm}{{\mathrm{lcm}}}
\newcommand{\gf}{{\mathrm{GF}}}
\newcommand{\support}{{\mathrm{Suppt}}}
\newcommand{\dist}{{\mathrm{dist}}}
\newcommand{\wt}{{\mathtt{wt}}}
\newcommand{\cP}{{\mathcal{P}}} 
\newcommand{\cB}{{\mathcal{B}}}
\newcommand{\C}{{\mathcal{C}}}
\newcommand{\A}{{\mathfrak{A}}}
\newcommand{\bzero}{{\mathbf{0}}}
\newcommand{\bone}{{\mathbf{1}}}
\newcommand{\bD}{{\mathbb{D}}}
\begin{document}

\begin{frontmatter}



\title{A spectral characterisation of $t$-designs and its applications}


\author[cho]{Eun-Kyung Cho}
\ead{ekcho@pusan.ac.kr} 
\author[cding]{Cunsheng Ding}
\ead{cding@ust.hk}
\author[hyun]{Jong Yoon Hyun}
\ead{hyun33@kias.re.kr}


\address[cho]{Department of Mathematics, Pusan National University, Republic of Korea}                                                  

\address[cding]{Department of Computer Science
                                                  and Engineering, The Hong Kong University of Science and Technology,
                                                  Clear Water Bay, Kowloon, Hong Kong, China}                                                  
\address[hyun]{Korea Institute for Advanced Study (KIAS), Seoul, Republic of Korea}


\begin{abstract}
There are two standard approaches to the construction of $t$-designs. The first one is based on permutation 
group actions on certain base blocks. The second one is based on coding theory. The objective of this paper is to give a spectral characterisation of all $t$-designs by introducing a characteristic Boolean 
function of a $t$-design. The spectra of the characteristic functions of $(n-2)/2$-$(n, n/2, 1)$ Steiner systems are determined and properties of such designs are proved. Delsarte's characterisations of orthogonal arrays and $t$-designs, which are two special cases of 
Delsarte's characterisation of $T$-designs in association schemes, are slightly 
extended into two spectral characterisations. Another characterisation of $t$-designs 
by Delsarte and Seidel is also extended into a spectral one. These spectral characterisations are then compared with the new spectral characterisation 
of this paper.      
\end{abstract}

\begin{keyword}
Association scheme \sep Boolean function \sep Steiner system \sep $t$-design \sep Walsh transform.

\MSC  05B05 \sep 51E10 \sep 94B15 

\end{keyword}

\end{frontmatter}

\section{Introduction}

Let $\cP$ be a set of $n \ge 1$ elements, and let $\cB$ be a set of $k$-subsets of $\cP$, where $k$ is
a positive integer with $1 \leq k \leq n$. Let $t$ be a positive integer with $t \leq k$. The pair
$\bD = (\cP, \cB)$ is called a $t$-$(n, k, \lambda)$ {\em design\index{design}}, or simply {\em $t$-design\index{$t$-design}}, if every $t$-subset of $\cP$ is contained in exactly $\lambda$ elements of
$\cB$. The elements of $\cP$ are called points, and those of $\cB$ are referred to as blocks.
We usually use $b$ to denote the number of blocks in $\cB$.  A $t$-design is called {\em simple\index{simple}} if $\cB$ does not contain repeated blocks. In this paper, we consider only simple 
$t$-designs.  A $t$-design is called {\em symmetric\index{symmetric design}} if $n = b$. It is clear that $t$-designs with $k = t$ or $k = n$ always exist. Such $t$-designs are {\em trivial}. In this paper, we consider only $t$-designs with $n > k > t$.
A $t$-$(n, k, \lambda)$ design is referred to as a {\em Steiner system\index{Steiner system}} if $t \geq 2$ and $\lambda=1$, and is denoted by $S(t, k, n)$.  

The existence and constructions of $t$-designs have been a fascinating topic of research for about one hundred and fifty years   
\cite{AK92,BJL,HBComb,HP03,MS77,Tonchev,Tonchevhb}. One fundamental construction is the group action approach 
\cite[Chapter III]{BJL}, which employs transitive or homogeneous permutation groups. The fatal limitation of 
this approach lies in the fact that highly transitive or homogeneous permutation groups 
other than the symmetric and alternating groups do not exist   
\cite[Chapter V]{BJL}. Another fundamental construction is based on error-correcting codes \cite{AK92,Tonchev,Tonchevhb}. This approach makes use of the automorphism group of a code or the Assmus-Mattson Theorem, and has also limitations. By now no infinite family of $4$-designs is directly constructed from codes. There are numerous constructions of $t$-designs with flexible parameters in the literature and important progresses on the existence of $t$-designs have been made \cite{Teirlinck,Wilson72a,Wilson72b,Wilson75}. 
A characterisation of $t$-designs was given in Delsarte's thesis and is a special case (the Johnson scheme case) of a characterisation of $T$-designs in association schemes 
\cite{Dels73}, which is not a spectral characterisation. 

The main objective of this paper is to present a spectral characterisation of $t$-$(n, k, \lambda)$ designs. This is done by studying the characteristic Boolean function of a $t$-$(n, k, \lambda)$ design. As one application of this characterisation, we will determine the spectra of the 
characteristic functions of $(n-2)/2$-$(n, n/2, 1)$ Steiner systems, and prove properties of such designs. We will 
also show two applications of $(n-2)/2$-$(n, n/2, 1)$ Steiner systems in coding theory. 
As a byproduct, we will extend a characterisation of $t$-designs by Delsarte and another 
one by Delsarte and Seidel into spectral characterisations and will then compare them with the spectral characterisation of this paper. It will be shown that the characterisation of 
$t$-designs presented in this paper is much simpler. 

\section{Krawtchouk polynomials and their properties} 

In this section, we introduce Krawchouk polynomials and summarize their properties, which will be 
needed in subsequent sections. A proof of these results could be found in \cite[Ch. 5, Sections 2 and 7]{MS77}. 

Let $n$ be a positive integer, and let $x$ be a variable taking nonnegative values. The Krawtchouk polynomial is defined by 
\begin{eqnarray}
P_k(x)=\sum_{j=0}^k (-1)^j \binom{x}{j} \binom{n-x}{k-j} 
\end{eqnarray} 
where $0 \leq k \leq n$ and 
$$ 
\binom{x}{i}=\frac{x(x-1) \cdots (x-i+1)}{i!}.  
$$ 
It is easily seen that 
\begin{eqnarray}
(1+z)^{n-x} (1-z)^x = \sum_{k=0}^n P_k(x)z^k.  
\end{eqnarray} 

The following alternative expressions will be useful later. 

\begin{theorem}\label{thm-KP191}
Let notation be the same as before. 
\begin{itemize}
\item $P_k(x)=\sum_{j=0}^k (-2)^j \binom{n-j}{k-j}  \binom{x}{j}. $
\item  $P_k(x)=\sum_{j=0}^k (-1)^j 2^{k-j} \binom{n-k+j}{j}  \binom{n-x}{k-j}. $
\end{itemize} 
\end{theorem}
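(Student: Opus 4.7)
The plan is to derive both identities directly from the generating function
$(1+z)^{n-x}(1-z)^x = \sum_{k=0}^n P_k(x) z^k$ stated just above, obtaining the second as a formal consequence of the first via the reflection $P_k(n-x) = (-1)^k P_k(x)$.

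For the first identity, I would write $1-z = (1+z) - 2z$ and expand
$$(1-z)^x = \sum_{j\ge 0}\binom{x}{j}(1+z)^{x-j}(-2z)^j.$$
Multiplying through by $(1+z)^{n-x}$ cancels the variable exponent $x-j$ and leaves
$$(1+z)^{n-x}(1-z)^x = \sum_{j\ge 0}\binom{x}{j}(-2)^j z^j (1+z)^{n-j}.$$
Extracting the coefficient of $z^k$ yields $P_k(x) = \sum_{j=0}^{k}(-2)^j\binom{x}{j}\binom{n-j}{k-j}$, with the truncation at $j=k$ coming from $\binom{n-j}{k-j}=0$ for $j>k$. Strictly speaking, the binomial expansion of $(1-z)^x$ is first justified for integer $x \in \{0,1,\dots,n\}$; since both sides of the resulting identity are polynomials in $x$ of degree at most $k$ and agree at infinitely many integers, they coincide as polynomial identities in $x$. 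This mild polynomial-continuation step is the only point that needs any care.

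For the second identity, I would first record the reflection relation, obtained by replacing $z$ with $-z$ in the generating function:
$$\sum_k P_k(n-x) z^k = (1+z)^x(1-z)^{n-x} = \sum_k P_k(x)(-z)^k,$$
so $P_k(n-x) = (-1)^k P_k(x)$. Applying the first identity at $n-x$ gives
$$(-1)^k P_k(x) = P_k(n-x) = \sum_{j=0}^{k}(-2)^j\binom{n-j}{k-j}\binom{n-x}{j}.$$
Reindexing $j \mapsto k-j$ in this sum converts $\binom{n-j}{k-j}\binom{n-x}{j}$ into $\binom{n-k+j}{j}\binom{n-x}{k-j}$, and the sign/power bookkeeping $(-1)^k(-2)^{k-j} = (-1)^j 2^{k-j}$ then produces the claimed second expression. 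The whole argument is routine: the only obstacle is careful tracking of signs and the polynomial continuation in $x$, neither of which is substantive.
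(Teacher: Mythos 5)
Your proof is correct. The paper gives no proof of this theorem at all --- it simply cites MacWilliams and Sloane --- and your generating-function argument (writing $1-z=(1+z)-2z$, extracting the coefficient of $z^k$, justifying the identity for general $x$ by polynomial continuation from $x\in\{0,1,\ldots,n\}$, and then obtaining the second expression from the first via the reflection $P_k(n-x)=(-1)^kP_k(x)$ and the reindexing $j\mapsto k-j$) is the standard derivation and checks out, including the sign bookkeeping $(-1)^k(-2)^{k-j}=(-1)^j2^{k-j}$.
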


The orthogonality of Krawtchouk polynomials is documented below. 

\begin{theorem}
For nonnegative integers $r$ and $s$, 
\begin{eqnarray}
\sum_{i=0}^n \binom{n}{i}P_r(i) P_s(i) = 2^n \binom{n}{r} \delta_{r,s},    
\end{eqnarray} 
where $\delta_{r,s}=1$ if $r=s$ and  $\delta_{r,s}=0$ if $r \neq s$.  
\end{theorem}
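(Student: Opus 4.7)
The plan is to use the generating-function identity
\[
(1+z)^{n-x}(1-z)^x = \sum_{k=0}^n P_k(x)\, z^k
\]
already stated in the excerpt, applied simultaneously in two independent variables $y$ and $z$, and then to compare coefficients of $y^r z^s$ on the two sides of a single closed-form identity.

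Concretely, I would first form the double sum
\[
S(y,z) \;=\; \sum_{i=0}^{n} \binom{n}{i}\,(1+y)^{n-i}(1-y)^{i}\,(1+z)^{n-i}(1-z)^{i}.
\]
Plugging in the generating-function identity for each factor and exchanging the order of summation, $S(y,z)$ becomes the bivariate generating function
\[
S(y,z) \;=\; \sum_{r=0}^{n}\sum_{s=0}^{n} \Bigl(\sum_{i=0}^{n}\binom{n}{i} P_r(i)P_s(i)\Bigr) y^r z^s,
\]
so the coefficient of $y^r z^s$ in $S(y,z)$ is precisely the quantity we want to evaluate.

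On the other hand, grouping the terms of $S(y,z)$ by $i$ gives
\[
S(y,z) \;=\; \sum_{i=0}^{n} \binom{n}{i} \bigl[(1+y)(1+z)\bigr]^{n-i}\bigl[(1-y)(1-z)\bigr]^{i},
\]
and the binomial theorem collapses this to
\[
S(y,z) \;=\; \bigl[(1+y)(1+z)+(1-y)(1-z)\bigr]^{n} \;=\; (2+2yz)^{n} \;=\; 2^{n}\sum_{r=0}^{n}\binom{n}{r}(yz)^{r}.
\]
Since only diagonal monomials $y^r z^r$ appear, the coefficient of $y^r z^s$ equals $2^n\binom{n}{r}\delta_{r,s}$, which matches the left-hand side computed above and yields the claim.

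There is no real obstacle here; the main thing to be careful about is the bookkeeping in the step where the single binomial sum over $i$ is collapsed via $(a+b)^n$ with $a=(1+y)(1+z)$ and $b=(1-y)(1-z)$, after which the simplification $a+b=2+2yz$ is immediate. Everything else is just matching coefficients of $y^r z^s$ in a polynomial identity, which is legitimate because both sides are polynomials of degree $\le n$ in each variable.
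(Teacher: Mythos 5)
Your proof is correct. The paper itself gives no proof of this orthogonality relation --- it simply cites MacWilliams and Sloane --- and your bivariate generating-function argument, collapsing $\sum_i \binom{n}{i}\bigl[(1+y)(1+z)\bigr]^{n-i}\bigl[(1-y)(1-z)\bigr]^{i}$ to $(2+2yz)^n$ and comparing coefficients of $y^r z^s$, is exactly the standard derivation found in that reference; every step checks out.
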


\begin{theorem}
For nonnegative integers $r$ and $s$, 
$$ 
\binom{n}{i}  P_s(i) =  
\binom{n}{s}  P_i(s). 
$$
\end{theorem}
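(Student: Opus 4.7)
The plan is to prove the identity $\binom{n}{i}P_s(i)=\binom{n}{s}P_i(s)$ via a two-variable generating function argument, exploiting the formula $(1+z)^{n-x}(1-z)^x=\sum_k P_k(x)z^k$ already quoted in the excerpt. The point is that the asserted identity is a statement about coefficients of $w^iz^s$ in a symmetric polynomial, so it should come out of computing a mixed bivariate generating function two different ways.

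First I would form $F(w,z):=\sum_{i=0}^n\sum_{s=0}^n \binom{n}{i}P_s(i)\,w^iz^s$ and sum over $s$ first, using the generating function to get $\sum_s P_s(i)z^s=(1+z)^{n-i}(1-z)^i$. Then the remaining sum in $i$ is a binomial-theorem expansion, giving $F(w,z)=\sum_i\binom{n}{i}\bigl((1-z)w\bigr)^i(1+z)^{n-i}=(1+z+w-wz)^n$. Next I would compute $G(w,z):=\sum_{i,s}\binom{n}{s}P_i(s)\,w^iz^s$ by reversing the order: sum over $i$ first to get $\sum_i P_i(s)w^i=(1+w)^{n-s}(1-w)^s$, then sum over $s$ to obtain $G(w,z)=(1+w+z-wz)^n$. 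Since the polynomials $(1+z+w-wz)^n$ and $(1+w+z-wz)^n$ are manifestly equal, comparing the coefficients of $w^iz^s$ in $F$ and $G$ yields the stated identity.

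There is really no obstacle in this proof; the only thing to watch is making sure the range of summation in the definition of $P_k(x)$ can safely be extended to all nonnegative $j$ (which it can, since $\binom{i}{j}=0$ for $j>i$ and likewise for the other factor), so that the bivariate sums factor cleanly. As an alternative route one could argue directly from the defining formula: multiplying $P_s(i)=\sum_{j\geq 0}(-1)^j\binom{i}{j}\binom{n-i}{s-j}$ by $\binom{n}{i}$ and applying the factorial identity $\binom{n}{i}\binom{i}{j}\binom{n-i}{s-j}=\binom{n}{s}\binom{s}{j}\binom{n-s}{i-j}$ (both sides equal $n!/[j!\,(s-j)!\,(i-j)!\,(n-s-i+j)!]$) term by term produces $\binom{n}{s}P_i(s)$. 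I would present the generating function version as the main argument since it is shorter and makes transparent why the symmetry $i\leftrightarrow s$ is forced by the symmetry $w\leftrightarrow z$ of $(1+w+z-wz)^n$.
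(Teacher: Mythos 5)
Your proof is correct. Note that the paper itself gives no proof of this identity (it is quoted from MacWilliams--Sloane, Ch.~5), so there is nothing to compare against; both of your routes are standard and complete. The generating-function argument is clean: summing the quoted identity $(1+z)^{n-x}(1-z)^x=\sum_k P_k(x)z^k$ against $\binom{n}{i}w^i$ does give $\bigl(1+z+w(1-z)\bigr)^n=(1+w+z-wz)^n$, which is manifestly symmetric under $w\leftrightarrow z$, and extracting the coefficient of $w^iz^s$ yields exactly $\binom{n}{i}P_s(i)=\binom{n}{s}P_i(s)$. Your fallback via the factorial identity $\binom{n}{i}\binom{i}{j}\binom{n-i}{s-j}=\binom{n}{s}\binom{s}{j}\binom{n-s}{i-j}$ is equally valid and is essentially the coefficient-level shadow of the same symmetry.
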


\begin{theorem}\label{thm-july164}
For nonnegative integers $r$ and $s$, 
\begin{eqnarray}
\sum_{i=0}^n P_r(i) P_i(s) = 2^n \delta_{r,s}.     
\end{eqnarray} 
\end{theorem}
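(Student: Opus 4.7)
The plan is to derive this identity directly from the two preceding theorems, which together give everything needed. The key observation is that the reciprocity relation $\binom{n}{i} P_s(i) = \binom{n}{s} P_i(s)$ lets us convert the unweighted sum $\sum_{i=0}^n P_r(i) P_i(s)$ into the weighted orthogonality sum from the earlier theorem, which already has a known value.

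Concretely, I would proceed as follows. Assume first that $s \le n$, so $\binom{n}{s} \ne 0$. Using the reciprocity identity, rewrite
\begin{equation*}
P_i(s) \;=\; \frac{\binom{n}{i}}{\binom{n}{s}}\, P_s(i).
\end{equation*}
Substituting into the sum to be evaluated gives
\begin{equation*}
\sum_{i=0}^n P_r(i) P_i(s) \;=\; \frac{1}{\binom{n}{s}} \sum_{i=0}^n \binom{n}{i} P_r(i) P_s(i).
\end{equation*}
By the orthogonality relation (the theorem two statements above), the right-hand sum equals $2^n \binom{n}{r} \delta_{r,s}$. Hence
\begin{equation*}
\sum_{i=0}^n P_r(i) P_i(s) \;=\; \frac{2^n \binom{n}{r} \delta_{r,s}}{\binom{n}{s}}.
\end{equation*}
The Kronecker delta forces $r=s$ in the nonzero case, so $\binom{n}{r}/\binom{n}{s}=1$, and the right-hand side collapses to $2^n \delta_{r,s}$, as claimed.

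There is essentially no obstacle here; the one thing to be mildly careful about is not dividing by zero, but since $0 \le s \le n$ is the standing assumption on the index of Krawtchouk polynomials, $\binom{n}{s}$ is a positive integer and the division is legitimate. Everything else is a one-line substitution, which is why I would not expect this short proof to be the main technical content — the real work lives in the spectral characterisation results to follow.
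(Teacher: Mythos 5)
Your proof is correct. Note that the paper itself gives no proof of this theorem: all of the Krawtchouk identities in Section~2 (apart from the last one, on $P_k(n-x)$) are stated without proof and referred to MacWilliams--Sloane, so there is no in-paper argument to compare against. Your derivation --- substituting the reciprocity relation $\binom{n}{i}P_s(i)=\binom{n}{s}P_i(s)$ to turn the unweighted sum into the weighted orthogonality sum $\sum_{i}\binom{n}{i}P_r(i)P_s(i)=2^n\binom{n}{r}\delta_{r,s}$, and then cancelling $\binom{n}{r}/\binom{n}{s}=1$ on the diagonal --- is the standard route and is complete; your remark that $\binom{n}{s}>0$ under the standing convention $0\le s\le n$ disposes of the only possible objection. (Strictly, the theorem's phrase ``nonnegative integers'' should be read as $0\le r,s\le n$, since $P_r$ is only defined for $r\le n$ in this paper; you have implicitly and correctly adopted that reading.)
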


\begin{theorem}\label{thm-july163}
Let $u \in \gf(2)^n$ with Hamming weight $\wt(u)=i$. Then 
$$ 
\sum_{\myatop{v \in \gf(2)^n}{\wt(v)=k}} (-1)^{u \cdot v} = P_k(i),  
$$ 
where $u \cdot v$ is the standard inner product of $u$ and $v$. 
\end{theorem}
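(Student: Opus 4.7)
The plan is to partition the sum on the left-hand side according to how the support of $v$ intersects the support of $u$, and then recognize the resulting combinatorial expression as the definition of $P_k(i)$.

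First I would fix $u \in \gf(2)^n$ with $\wt(u)=i$, so that $u$ has exactly $i$ coordinates equal to $1$ and $n-i$ coordinates equal to $0$. For any $v \in \gf(2)^n$, let $j = |\support(u) \cap \support(v)|$ denote the number of coordinates where both $u$ and $v$ have a $1$. Then the standard inner product satisfies
\begin{equation*}
u \cdot v \equiv j \pmod 2,
\end{equation*}
so $(-1)^{u \cdot v} = (-1)^j$, depending only on $j$ (not on the specific $v$).

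Next I would count, for each admissible value of $j$, how many vectors $v$ with $\wt(v)=k$ satisfy $|\support(u) \cap \support(v)| = j$. Such a $v$ is obtained by choosing $j$ coordinates from the $i$ positions inside $\support(u)$ and $k-j$ coordinates from the $n-i$ positions outside $\support(u)$, giving $\binom{i}{j}\binom{n-i}{k-j}$ choices. Summing over $j$ and using the previous observation yields
\begin{equation*}
\sum_{\myatop{v \in \gf(2)^n}{\wt(v)=k}} (-1)^{u \cdot v} = \sum_{j=0}^{k} (-1)^j \binom{i}{j}\binom{n-i}{k-j},
\end{equation*}
where terms with $j > i$ or $k-j > n-i$ vanish automatically because of the binomial coefficients. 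Comparing with the definition of $P_k(x)$ at $x=i$ gives exactly $P_k(i)$.

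There is no real obstacle here; the only subtlety worth stating explicitly is that the range of $j$ in the displayed sum may be taken as $0 \le j \le k$ (with vanishing boundary terms) so that it matches the defining formula of the Krawtchouk polynomial verbatim. This also makes the proof independent of whether $k \le i$ or $k > i$.
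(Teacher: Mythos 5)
Your proof is correct and is the standard argument (partitioning the weight-$k$ vectors $v$ by the size $j$ of $\support(u)\cap\support(v)$, counting $\binom{i}{j}\binom{n-i}{k-j}$ vectors in each class, and matching the resulting sum with the defining formula of $P_k$); the paper itself gives no proof, citing MacWilliams--Sloane, and the proof found there is essentially the one you wrote. Your remark that the terms with $j>i$ or $k-j>n-i$ vanish, so the sum can be taken over $0\le j\le k$ verbatim, is exactly the right point to make explicit.
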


The next theorem documents further basic properties of the Krawtchouk polynomials. 

\begin{theorem}\label{thm-ms21}
Let notation be the same as before. 
\begin{itemize}
\item $\sum_{k=0}^n \binom{n-k}{n-j} P_k(x)=2^j \binom{n-x}{j}.$ 
\item $P_k(i)=(-1)^iP_{n-k}(i)$, $0 \leq i \leq n$. 
\item $P_{n/2}(n/2)= (-1)^{n/4} \binom{n/2}{n/4}$ if $n \equiv 0 \pmod{4}$. 
\item $P_n(n)=(-1)^n$. 
\item $P_k(1)=\frac{n-2k}{n} \binom{n}{k}.$
\item $P_k(0)=\binom{n}{k}.$
\end{itemize}
\end{theorem}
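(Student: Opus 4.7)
The plan is to derive all six items from the generating function identity
\[
(1+z)^{n-x}(1-z)^x \;=\; \sum_{k=0}^n P_k(x)\, z^k
\]
together with, in one case, direct substitution into the defining sum. Items (4), (5), (6) are elementary and I would dispatch them first: for (6) only the $j=0$ term survives in the defining sum; for (5) only $j=0,1$ contribute and the identity $\binom{n-1}{k}-\binom{n-1}{k-1}=\frac{n-2k}{n}\binom{n}{k}$ finishes it; for (4) I would set $x=n$ in the generating function to get $(1-z)^n$, read off $P_k(n)=(-1)^k\binom{n}{k}$, and specialise to $k=n$.

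For item (2), the key move is the substitution $z\mapsto 1/z$ in the generating function. Using $(1+1/z)^{n-i}(1-1/z)^i = z^{-n}(-1)^i (1+z)^{n-i}(1-z)^i$, the left side expands as $\sum_k P_k(i) z^{-k}$ while the right side expands as $(-1)^i\sum_m P_m(i) z^{m-n}$; equating the coefficient of $z^{-k}$ on both sides yields $P_k(i)=(-1)^i P_{n-k}(i)$. Item (3) then follows by evaluating the generating function at $x=n/2$: $(1+z)^{n/2}(1-z)^{n/2}=(1-z^2)^{n/2}$, whose coefficient of $z^{n/2}$ exists only when $n\equiv 0\pmod 4$ (forcing $m=n/4$) and equals $(-1)^{n/4}\binom{n/2}{n/4}$.

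Item (1) is the main obstacle, since it is not a single coefficient extraction but an identity of sums. My plan is to write $\binom{n-k}{n-j}=\binom{n-k}{j-k}=[z^{j-k}](1+z)^{n-k}=[z^j]\,z^k(1+z)^{n-k}$, so that
\[
\sum_{k=0}^n \binom{n-k}{n-j} P_k(x) \;=\; [z^j]\,(1+z)^n \sum_{k=0}^n P_k(x)\!\left(\frac{z}{1+z}\right)^{\!k}.
\]
Substituting $w=z/(1+z)$ into the generating function gives $1+w=(1+2z)/(1+z)$ and $1-w=1/(1+z)$, so the inner sum equals $(1+2z)^{n-x}/(1+z)^n$. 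The factors $(1+z)^n$ cancel and I am left with $[z^j](1+2z)^{n-x}=2^j\binom{n-x}{j}$, which is exactly (1). This substitution trick is the only non-mechanical step; once it is in place, every remaining claim reduces to reading off a binomial coefficient.
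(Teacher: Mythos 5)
Your proof is correct and complete; note that the paper itself offers no proof of this theorem, simply deferring to MacWilliams--Sloane, and your generating-function derivation is essentially the standard route taken there. Every step checks out, including the one genuinely non-mechanical move in item (1): with $w=z/(1+z)$ one indeed gets $(1+z)^n\sum_k P_k(x)w^k=(1+2z)^{n-x}$, whose coefficient of $z^j$ is $2^j\binom{n-x}{j}$, and the boundary cases $\binom{n-k}{n-j}=\binom{n-k}{j-k}=0$ for $k>j$ are consistent on both sides.
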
 

\begin{theorem}\label{thm-july161}
Let notation be the same as before. We have 
$$ 
P_k(x)=(-1)^k P_k(n-x). 
$$
\end{theorem}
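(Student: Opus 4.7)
The plan is to prove the identity by exploiting the generating function
$(1+z)^{n-x}(1-z)^x = \sum_{k=0}^n P_k(x)z^k$ stated in the preliminaries. The idea is that the substitution $x \mapsto n-x$ and the substitution $z \mapsto -z$ have essentially the same effect on the left-hand side, up to swapping the roles of the two factors, so comparing coefficients of $z^k$ should immediately yield the claimed sign $(-1)^k$.

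Concretely, first I would write down the generating function with $n-x$ in place of $x$:
\begin{equation*}
(1+z)^{x}(1-z)^{n-x} = \sum_{k=0}^n P_k(n-x)\,z^k.
\end{equation*}
Next I would substitute $z \mapsto -z$ into the original generating function to obtain
\begin{equation*}
(1-z)^{n-x}(1+z)^{x} = \sum_{k=0}^n (-1)^k P_k(x)\,z^k.
\end{equation*}
The left-hand sides of these two displays are visibly equal, so equating coefficients of $z^k$ gives $P_k(n-x) = (-1)^k P_k(x)$, which is equivalent to the stated identity.

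As a sanity check, I would also verify the identity directly from the defining sum $P_k(x) = \sum_{j=0}^k (-1)^j \binom{x}{j}\binom{n-x}{k-j}$ by reindexing $j \mapsto k-j$: this turns $\binom{n-x}{j}\binom{x}{k-j}$ (the corresponding term in $P_k(n-x)$) into $(-1)^{k-j}\binom{x}{j}\binom{n-x}{k-j}$, and pulling out the factor $(-1)^k$ recovers $(-1)^k P_k(x)$. I do not anticipate a genuine obstacle here; the only subtlety is keeping track of the sign from the substitution $z \mapsto -z$ (or, in the combinatorial route, from the reindexing), which is a one-line bookkeeping matter rather than a real difficulty.
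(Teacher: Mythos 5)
Your proposal is correct. In fact, your ``sanity check'' at the end --- reindexing the defining sum $P_k(n-x)=\sum_{j=0}^k(-1)^j\binom{n-x}{j}\binom{x}{k-j}$ via $j\mapsto k-j$ and pulling out $(-1)^k$ --- is exactly the paper's own proof, so you have reproduced their argument verbatim as a by-product. Your primary route via the generating function $(1+z)^{n-x}(1-z)^x=\sum_{k=0}^n P_k(x)z^k$ is a genuinely different (and equally valid) presentation: substituting $x\mapsto n-x$ swaps the two factors, substituting $z\mapsto -z$ does the same while introducing the factor $(-1)^k$ in the coefficient of $z^k$, and comparing coefficients finishes the job. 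The generating-function argument buys a cleaner conceptual explanation of where the sign comes from and avoids any index bookkeeping, at the small cost of invoking the displayed identity from Section~2 (which, for non-integer $x$, one should read as an identity of polynomials in $x$, established by agreement at the integer points $0\le x\le n$); the paper's direct reindexing is entirely self-contained from the definition. Either argument is complete and there is no gap.
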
 

\begin{proof}
By definition, 
\begin{eqnarray*}
P_k(n-x) = \sum_{j=0}^k (-1)^j \binom{n-x}{j} \binom{x}{k-j}. 
\end{eqnarray*} 
Substituting $k-j$ with $i$, we get 
\begin{eqnarray*}
P_k(n-x) = \sum_{i=0}^k (-1)^{k-i} \binom{n-x}{k-i} \binom{x}{i} =(-1)^k P_k(x).   
\end{eqnarray*}
\end{proof}

\section{Basics of $t$-designs} 

In this paper, we will consider $t$-designs with the point set $\cP=\{1,2, \ldots, n\}$, where $n$ 
is a positive integer. For simplicity, we use $[i..j]$ to denote the set $\{i, i+1, \ldots, j\}$ 
for any two positive integers $i$ and $j$ with $i \leq j$. For an integer $i$ with $0 \leq i \leq 
n$, denote by $\binom{\cP}{i}$ the set of all $i$-subsets of $\cP$. 

We will need the following lemmas later \cite[p. 15]{BJL}. 

\begin{lemma}\label{lem-designbasicp} 
Let $\bD$ be a $t$-$(n, k, \lambda)$ design. Let $s$ be an integer with $1 \leq s \leq t \leq k$. 
Then $\bD$ is also an $s$-$(n, k, \lambda_s)$ design, where 
\begin{eqnarray}\label{eqn-fundms}
\lambda_s = \lambda \frac{\binom{n-s}{t-s}}{\binom{k-s}{t-s}}. 
\end{eqnarray} 
In addition, 
\begin{eqnarray}
b := \lambda_0 = \lambda \frac{\binom{n}{t}}{\binom{k}{t}}  
\end{eqnarray} 
is the number of blocks in the design $\bD$.  
\end{lemma}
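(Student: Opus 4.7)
The plan is to prove Lemma~\ref{lem-designbasicp} by a standard double-counting argument applied to a fixed $s$-subset of $\cP$.

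First I would fix an arbitrary $s$-subset $S \in \binom{\cP}{s}$ and introduce the quantity $\lambda_s(S)$ defined as the number of blocks $B \in \cB$ with $S \subseteq B$. The goal is to show that $\lambda_s(S)$ does not depend on the choice of $S$ and equals the claimed expression; once this is established, the design property at level $s$ follows by definition. To do this, I would count in two ways the set
\[
N(S) = \{(T,B) : T \in \tbinom{\cP}{t},\ S \subseteq T \subseteq B,\ B \in \cB\}.
\]

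For the first count, I would fix $T$ first: the number of $t$-subsets $T$ of $\cP$ containing $S$ is $\binom{n-s}{t-s}$ (we choose the remaining $t-s$ points from $\cP \setminus S$), and for each such $T$ the $t$-design hypothesis gives exactly $\lambda$ blocks containing $T$. Hence $|N(S)| = \lambda \binom{n-s}{t-s}$. For the second count, I would fix $B$ first: the number of blocks $B$ containing $S$ is $\lambda_s(S)$ by definition, and for each such $B$ the number of $t$-subsets $T$ with $S \subseteq T \subseteq B$ is $\binom{k-s}{t-s}$, since we must choose the remaining $t-s$ points from the $k-s$ points of $B \setminus S$. Equating the two counts yields
\[
\lambda_s(S)\binom{k-s}{t-s} = \lambda \binom{n-s}{t-s},
\]
and since $1 \leq s \leq t \leq k$ the binomial coefficient $\binom{k-s}{t-s}$ is nonzero, so $\lambda_s(S)$ is indeed independent of $S$ and equal to the stated formula~\eqref{eqn-fundms}.

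Finally, for the block count $b$, I would apply the same argument (or, formally, extend the derivation to $s=0$ by taking $S = \emptyset$, which is the unique $0$-subset of $\cP$), so the number of blocks equals $\lambda_0 = \lambda \binom{n}{t}/\binom{k}{t}$. No step here is really difficult; the only mild subtlety is to observe that the double-counting identity depends only on $|S|=s$, so the derived parameter $\lambda_s$ is well-defined, which is exactly what gives the $s$-design property.
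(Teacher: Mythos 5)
Your double-counting argument is correct and is exactly the standard proof of this classical fact, which the paper itself does not prove but simply cites from \cite[p.~15]{BJL}. The counting of pairs $(T,B)$ with $S \subseteq T \subseteq B$, the observation that $\binom{k-s}{t-s} \neq 0$ since $s \leq t \leq k$, and the extension to $s=0$ via $S=\emptyset$ are all sound.
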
 

Let $\bD=(\cP, \cB)$ be a $t$-$(n, k, \lambda)$ design. Let $\overline{\cB}$ be the set of 
the complements of all the blocks $B$ in $\cB$, and let $\overline{\bD}=(\cP, \overline{\cB})$. 

\begin{lemma}\label{lem-complementdesign} 
Let $\bD=(\cP, \cB)$ be a $t$-$(n, k, \lambda)$ design. Then $\overline{\bD}=(\cP, \overline{\cB})$ is 
an $s$-$(n, n-k, \overline{\lambda}_s)$ design for all $1 \leq s \leq t$, where 
\begin{eqnarray}
\overline{\lambda}_s = \sum_{i=0}^s (-1)^i \binom{s}{i} \lambda_i.  
\end{eqnarray}  
In particular, 
$$ 
\overline{\lambda}_t:=\frac{\lambda \binom{n-k}{t}}{\binom{k}{t}}. 
$$
\end{lemma}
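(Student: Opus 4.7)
The plan is to count, for a fixed $s$-subset $S \subseteq \cP$ with $1 \le s \le t$, the number of blocks of $\overline{\bD}$ that contain $S$. Since $\overline{B}$ ranges over complements of blocks $B \in \cB$, the condition $S \subseteq \overline{B}$ is equivalent to $S \cap B = \emptyset$. So it suffices to count blocks of $\cB$ disjoint from $S$ and show that this number depends only on $s$.

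First I would set up inclusion--exclusion on the point set $S$. For each $T \subseteq S$ let $A_T = \{B \in \cB : T \subseteq B\}$. By Lemma~\ref{lem-designbasicp}, $|A_T|$ depends only on $|T|$, and equals $\lambda_{|T|}$ (with $\lambda_0 = b$), because $\bD$ is simultaneously an $s'$-$(n,k,\lambda_{s'})$ design for every $0\le s' \le t$. Then
\begin{eqnarray*}
\#\{B \in \cB : S \cap B = \emptyset\}
= \sum_{T \subseteq S} (-1)^{|T|} |A_T|
= \sum_{i=0}^{s} (-1)^i \binom{s}{i} \lambda_i,
\end{eqnarray*}
which is independent of the particular choice of $S$. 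This proves that $\overline{\bD}$ is an $s$-$(n, n-k, \overline{\lambda}_s)$ design with $\overline{\lambda}_s$ as claimed.

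For the "in particular" assertion at $s=t$, rather than simplifying the alternating sum by hand I would use a clean double count of pairs $(S,B)$ with $|S|=t$, $B \in \cB$, and $S \cap B = \emptyset$. Counting by $B$, each of the $b$ blocks contributes $\binom{n-k}{t}$ such $t$-subsets of $\cP\setminus B$, giving $b\binom{n-k}{t}$. Counting by $S$, each of the $\binom{n}{t}$ choices contributes $\overline{\lambda}_t$ (by the first part of the lemma applied with $s=t$). Equating the two counts and using $b = \lambda \binom{n}{t}/\binom{k}{t}$ from Lemma~\ref{lem-designbasicp} yields
\begin{eqnarray*}
\overline{\lambda}_t = \frac{b\binom{n-k}{t}}{\binom{n}{t}} = \frac{\lambda \binom{n-k}{t}}{\binom{k}{t}}.
\end{eqnarray*}

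There is no serious obstacle here; the only thing to be slightly careful about is to invoke Lemma~\ref{lem-designbasicp} uniformly for all $s' \le t$ (including $s'=0$) so that every term $\lambda_{|T|}$ in the inclusion--exclusion is legitimately constant, and to note that the double count for the particular case relies on the $s=t$ case of what has just been proved.
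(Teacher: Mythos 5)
Your proof is correct. The paper does not actually prove this lemma — it is quoted as a known fact from the design-theory literature (the citation to p.~15 of the Beth--Jungnickel--Lenz book) — so there is no in-paper argument to compare against; your inclusion--exclusion over the subsets $T \subseteq S$, justified by invoking Lemma~\ref{lem-designbasicp} to make each $|A_T| = \lambda_{|T|}$ well defined for all $0 \le |T| \le t$, together with the double count of disjoint pairs $(S,B)$ for the case $s=t$, is the standard and complete derivation of both assertions.
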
 

The design $\overline{\bD}$ is called the \textit{complementary design} of $\bD$. We will employ the two 
forgoing lemmas later.  

Let $\bD=(\cP, \cB)$ be a $t$-$(n, k, \lambda)$ design. Let $i$ and $j$ be two nonnegative integers, 
and let $X=\{p_1, p_2, \ldots, p_{i+j}\}$ be a set of distinct points. Denote by $\lambda_{(i,j)}$ 
the number of blocks $B_\ell$ of $\bD$ such that 
$$ 
B_\ell \cap \{p_1, p_2, \ldots, p_{i+j}\}=Y:=\{p_1, p_2, \ldots, p_i\}.  
$$ 
These numbers $\lambda_{(i,j)}$ are called \textit{block intersection numbers}, 
and depend on not only $i$ and $j$, but also the specific points in $X$. 
However, under certain conditions these intersection numbers are dependent 
of $i$ and $j$ only. Specifically, we have the following \cite[p. 101]{BJL}. 

\begin{theorem}\label{thmBJL101}
Let $\bD=(\cP, \cB)$ be a $t$-$(n, k, \lambda)$ design. Let $i$ and $j$ be nonnegative integers. 
Then the number $\lambda_{(i,j)}$ depends only on $i$ and $j$, but not the points in $X$ and $Y$ if $i+j 
\leq t$ or $\lambda=1$ and $X$ is contained in some block of $\bD$.   
\end{theorem}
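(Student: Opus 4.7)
The natural approach is inclusion--exclusion on the ``forbidden'' part $X\setminus Y=\{p_{i+1},\dots,p_{i+j}\}$. For a subset $U\subseteq\cP$, let $N(U)$ denote the number of blocks of $\bD$ containing $U$. A block $B_\ell$ satisfies $B_\ell\cap X=Y$ precisely when $Y\subseteq B_\ell$ and $B_\ell\cap(X\setminus Y)=\emptyset$, so sifting over which points of $X\setminus Y$ a block happens to pick up gives
\begin{equation*}
\lambda_{(i,j)} \;=\; \sum_{h=0}^{j}(-1)^h \sum_{\substack{Z\subseteq X\setminus Y\\|Z|=h}} N(Y\cup Z).
\end{equation*}
The entire problem reduces to showing, under either of the two hypotheses, that $N(Y\cup Z)$ depends only on $|Y\cup Z|=i+h$, not on the actual points.

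In the case $i+j\le t$, every set $Y\cup Z$ appearing above has cardinality $i+h\le t$, so Lemma~\ref{lem-designbasicp} immediately yields $N(Y\cup Z)=\lambda_{i+h}$. The inner sum therefore collapses to $\binom{j}{h}\lambda_{i+h}$, and one obtains
\begin{equation*}
\lambda_{(i,j)} \;=\; \sum_{h=0}^{j}(-1)^h\binom{j}{h}\lambda_{i+h},
\end{equation*}
which depends only on $i$ and $j$ (and the parameters of $\bD$).

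The Steiner-system case $\lambda=1$ with $X$ contained in some block $B^{\ast}$ is handled by splitting the sum according to whether $i+h\le t$ or $i+h>t$. For subsets with $i+h\le t$, Lemma~\ref{lem-designbasicp} still applies, so $N(Y\cup Z)=\lambda_{i+h}$. For subsets with $i+h>t$, the hypothesis $Y\cup Z\subseteq X\subseteq B^{\ast}$ shows $N(Y\cup Z)\ge 1$; on the other hand, picking any $t$-subset $T$ of $Y\cup Z$ and invoking $\lambda=1$ forces every block containing $Y\cup Z$ to be the unique block through $T$, so $N(Y\cup Z)=1$. In either subcase $N(Y\cup Z)$ depends only on $i+h$, so $\lambda_{(i,j)}$ again depends only on $i$ and $j$.

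The main obstacle is the second case: one has to resist applying Lemma~\ref{lem-designbasicp} outside its stated range $s\le t$ and must instead exploit the uniqueness of the block through a $t$-subset together with the witness block $B^{\ast}$ to control the contribution of those $Y\cup Z$ whose size exceeds $t$. Once this subcase split is carried out cleanly, the inclusion--exclusion identity does the rest.
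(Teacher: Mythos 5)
Your proof is correct. The paper itself does not prove this statement---it is quoted from Beth--Jungnickel--Lenz with a page reference---so there is no in-paper argument to compare against; but your inclusion--exclusion over the subsets $Z\subseteq X\setminus Y$ is exactly the standard derivation, and it is the same mechanism that underlies the ``triangular formula'' $\lambda_{(i,j)}=\lambda_{(i,j+1)}+\lambda_{(i+1,j)}$ which the paper invokes later (iterating that recursion reproduces your alternating sum $\sum_{h}(-1)^h\binom{j}{h}N$-values). Both cases are handled correctly: for $i+j\le t$ every $N(Y\cup Z)$ is a genuine $\lambda_{i+h}$ by Lemma~\ref{lem-designbasicp}, and in the Steiner case your two-sided bound---existence of a containing block from $Y\cup Z\subseteq X\subseteq B^{\ast}$, uniqueness from $\lambda=1$ applied to any $t$-subset of $Y\cup Z$---is precisely the point that makes the hypothesis ``$X$ is contained in some block'' necessary, and you have identified and used it correctly. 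No gaps.
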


We first have the following result.   

\begin{lemma}\label{lem-hdbooks} 
Let $\bD=(\cP, \cB)$ be a $t$-$(n, k, \lambda)$ design. Let $i$ and $j$ be nonnegative integers. 
If $0 \leq i + j \leq t$, then 
$$ 
\lambda_{(i,j)}=\frac{\lambda \binom{n-i-j}{k-i}}{\binom{n-t}{k-t}}. 
$$ 
\end{lemma}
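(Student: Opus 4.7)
The plan is to prove the lemma by a straightforward inclusion--exclusion on the ``forbidden'' points. Write $Y=\{p_1,\dots,p_i\}$ and $Z=\{p_{i+1},\dots,p_{i+j}\}$, so that $\lambda_{(i,j)}$ counts blocks $B\in\cB$ with $Y\subseteq B$ and $B\cap Z=\emptyset$. For each $\ell=1,\dots,j$ let $\mathcal{A}_\ell$ be the family of blocks containing $Y\cup\{p_{i+\ell}\}$. Then
\begin{equation*}
\lambda_{(i,j)} \;=\; \bigl|\{B\in\cB:Y\subseteq B\}\bigr| \;-\; \bigl|\mathcal{A}_1\cup\cdots\cup\mathcal{A}_j\bigr|.
\end{equation*}
Because $i+j\le t$, every intersection $\mathcal{A}_{\ell_1}\cap\cdots\cap\mathcal{A}_{\ell_r}$ consists of the blocks containing a fixed $(i+r)$-subset with $i+r\le t$, and by Lemma \ref{lem-designbasicp} this count depends only on $i+r$ and equals $\lambda_{i+r}$. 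Standard inclusion--exclusion therefore yields
\begin{equation*}
\lambda_{(i,j)} \;=\; \sum_{\ell=0}^{j} (-1)^\ell \binom{j}{\ell}\lambda_{i+\ell}.
\end{equation*}

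Next, I would rewrite $\lambda_s=\lambda\binom{n-s}{t-s}/\binom{k-s}{t-s}$ from Lemma \ref{lem-designbasicp} in the equivalent symmetric form
\begin{equation*}
\lambda_s \;=\; \frac{\lambda\binom{n-s}{k-s}}{\binom{n-t}{k-t}},
\end{equation*}
which one verifies by expanding both expressions as ratios of factorials; the denominator $\binom{n-t}{k-t}$ is independent of $s$, so it factors out of the inclusion--exclusion sum. The proof then reduces to the purely combinatorial identity
\begin{equation*}
\sum_{\ell=0}^{j}(-1)^\ell\binom{j}{\ell}\binom{n-i-\ell}{k-i-\ell} \;=\; \binom{n-i-j}{k-i}.
\end{equation*}

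I would establish this identity by induction on $j$. The base case $j=0$ is trivial, and the inductive step splits $\binom{j+1}{\ell}=\binom{j}{\ell}+\binom{j}{\ell-1}$, applies the induction hypothesis to each of the two resulting sums (the second after shifting the index $\ell\mapsto\ell+1$ and with the parameters $n$ and $k$ both decreased by $1$), and finishes with a single application of Pascal's rule $\binom{n-i-j}{k-i}-\binom{n-i-j-1}{k-i-1}=\binom{n-i-j-1}{k-i}$. Combining this identity with the inclusion--exclusion expression gives the claimed formula.

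I do not expect any serious obstacle: the only subtle point is that inclusion--exclusion requires the counts of blocks through each intermediate subset to be ``design--regular'', which is exactly what the hypothesis $i+j\le t$ guarantees via Lemma \ref{lem-designbasicp}. Once that is in place, the remaining work is the binomial identity, which is routine by induction.
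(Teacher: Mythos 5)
Your proof is correct. The paper itself states Lemma \ref{lem-hdbooks} without proof (citing it as a standard fact from the design-theory literature), and your argument --- inclusion--exclusion over the forbidden points to get $\lambda_{(i,j)}=\sum_{\ell=0}^{j}(-1)^\ell\binom{j}{\ell}\lambda_{i+\ell}$, followed by the rewriting $\lambda_s=\lambda\binom{n-s}{k-s}/\binom{n-t}{k-t}$ and the alternating binomial identity --- is exactly the standard textbook derivation; all three steps (the regularity of the intersection counts guaranteed by $i+j\le t$, the factorial verification, and the inductive proof of the identity via Pascal's rule) check out.
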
 

The following facts about these $\lambda_{(i,j)}$ are well known: 
\begin{itemize}
\item $\lambda_{(i,0)}=\lambda_i$ for $0 \leq i \leq t$. 
\item $\lambda_{(0,i)}=\overline{\lambda}_i$ for $0 \leq i \leq t$.  
\item $\lambda_{(i, j)} = \lambda_{(i, j+1)} + \lambda_{(i+1, j)}$ for $i+j \leq t$, which is called the \textit{triangular 
formula}.   
\end{itemize} 

Consider now a $t$-$(n, t+1, 1)$ design $\bD$. Let $X$ be any 
block of $\bD$ and let $Y$ be an $i$-subset of $X$. Denote by $\lambda_{(i, t+1-i)}(X, Y)$ the number of blocks $B_j$ in $\cB$ such that 
$$ 
B_j \cap X = Y, 
$$ 
where $X$ is a block in $\cB$ and $Y$ is $i$-subset of $X$. 
By Theorem \ref{thmBJL101}, these numbers $\lambda_{(i, t+1-i)}(X, Y)$ depend only on $i$ and $t$. 
Hence, the triangular formula above still holds for $0 \leq i +j \leq t+1$ \cite[p. 9]{AK92}. 

We have then the following theorem. 

\begin{theorem}\label{thm-mymay22}
Let $\bD$ be a $t$-$(n, t+1, 1)$ design $\bD$. Let $X$ be any 
block of $\bD$ and let $Y$ be a $j$-subset of $X$. Then 
\begin{eqnarray}
\lambda_{(t-(j-1), j)}(X, Y)=\frac{ (-1)^{j-1} \sum_{\ell=0}^{j-1} (-1)^\ell \binom{n-t}{\ell+1}}{n-t} 
+(-1)^j 
\end{eqnarray} 
for $1 \leq j \leq t+1$. 
\end{theorem}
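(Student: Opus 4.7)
The plan is to exploit the triangular formula, extended to the level $i+j=t+1$ as noted just after Theorem~\ref{thmBJL101}, together with Lemma~\ref{lem-hdbooks}, to set up a first-order downward recurrence for the intersection numbers at that level and then solve it by iteration. Since $\bD$ has $\lambda=1$ and $X$ is a block, the number $\lambda_{(i,\,t+1-i)}(X,Y)$ depends only on $i$, so I would abbreviate it as $g_i$, and the quantity to be computed is $g_{t-(j-1)}$.

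First I would pin down the boundary value $g_{t+1}=1$: if $Y=X$ then any block $B$ with $B\cap X=X$ must contain $X$, and since $|B|=t+1=|X|$ this forces $B=X$. Next, applying the triangular formula at level $i+j=t$, which reads
$$\lambda_{(i,\,t-i)} \;=\; \lambda_{(i,\,t+1-i)} + \lambda_{(i+1,\,t-i)} \;=\; g_i+g_{i+1},$$
and evaluating the left-hand side via Lemma~\ref{lem-hdbooks} with $k=t+1$, $\lambda=1$ yields the recurrence
$$g_i \;=\; \frac{\binom{n-t}{t+1-i}}{n-t} \;-\; g_{i+1}.$$
Iterating this downward from $i=t+1-j$ all the way to $i=t+1$ and using $g_{t+1}=1$ produces
$$g_{t+1-j} \;=\; \sum_{k=0}^{j-1}\frac{(-1)^k\binom{n-t}{j-k}}{n-t} \;+\; (-1)^j.$$

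The final step is a routine reindexing: setting $\ell=j-1-k$ converts $\binom{n-t}{j-k}$ into $\binom{n-t}{\ell+1}$, and the sign $(-1)^{j-1-\ell}=(-1)^{j-1}(-1)^\ell$ pulls out the global factor $(-1)^{j-1}$, reproducing exactly the expression stated in the theorem. The substantive ingredients are the boundary value $g_{t+1}=1$ (which uses the block-size equaling $t+1$ in an essential way) and the extension of the triangular formula to the level $i+j=t+1$; after that, the argument is signed-sum bookkeeping. A quick sanity check at $j=1$ gives $g_t=\binom{n-t}{1}/(n-t)-1=0$, consistent with the fact that in a Steiner system with $\lambda=1$ the block $X$ is the unique block through any $t$-subset of itself, so no other block can meet $X$ in exactly those $t$ points.
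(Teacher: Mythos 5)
Your proof is correct and follows essentially the same route as the paper's: both iterate the triangular formula at the level $i+j=t+1$ down to the boundary value $\lambda_{(t+1,0)}(X,X)=1$, evaluating the level-$t$ terms via Lemma \ref{lem-hdbooks}, and then reindex the resulting alternating sum. Your explicit justification of the boundary value and the $j=1$ sanity check are nice additions, but the argument is the same.
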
  

\begin{proof}
With the triangular formula, we have 
$$ 
\lambda_{(t-(j-1), j)}(X, Y) = (-1)^j \lambda_{(t+1, 0)}(X, \emptyset) + \sum_{\ell=0}^{j-1} (-1)^\ell \lambda_{(t-(j-1)+\ell, j-1-\ell)}
$$ 
for $1 \leq j \leq t+1$. By definition, $\lambda_{(t+1, 0)}(X, \emptyset)=1$. The desired conclusion then follows 
from Lemma \ref{lem-hdbooks}. 
\end{proof}

\section{A spectral characterization of $t$-designs} 

A Boolean function with $n$ variables is a function $f(x_1, x_2, \ldots, x_n)$ from $\gf(2)^n$ 
to $\{0, 1\}$, which is viewed as a subset of the set of real numbers. In other words, Boolean 
functions in this paper are special real-valued functions unless otherwise stated.  
Let $x=(x_1, x_2, \ldots, x_n)$. The first kind of Walsh transform $\hat{f}$ of $f$ 
is defined by 
\begin{eqnarray}\label{eqn-firstWalshT}
\hat{f}(w)=\sum_{x \in \gf(2)^n} f(x) (-1)^{w\cdot x}, 
\end{eqnarray} 
where $w=(w_1, w_2, \ldots, w_n) \in \gf(2)^n$, $w \cdot x =\sum_{i=1}^n w_ix_i$ is the standard 
inner product in the vector space $\gf(2)^n$. The multiset $\{\hat{f}(w): w \in \gf(2)^n\}$ 
is called the spectra of $f(x)$. 
It is easily verified that the 
inverse transform is given by 
\begin{eqnarray}\label{eqn-101010}
f(x)=\frac{1}{2^n} \sum_{w \in \gf(2)^n} \hat{f}(w) (-1)^{w\cdot x}.  
\end{eqnarray}  

The support $\support(f)$ of $f$ is defined by 
$$ 
\support(f)=\{u \in \gf(2)^n: f(u)=1\} \subseteq \gf(2)^n. 
$$ 
The mapping $f \mapsto \support(f)$ is a one-to-one correspondence from the set of all Boolean functions 
with $n$ variables to the power set of $\gf(2)^n$. The weight $\wt(f)$ of $f$ is defined to be the cardinality of 
$\support(f)$.

The support of a vector $b=(b_1, b_2, \ldots, b_n) \in \gf(2)^n$ is defined by 
$$ 
\support(b)=\{1 \leq i \leq n: b_i=1\} \subseteq [1..n],  
$$ 
where $[i..j]$ denotes the set $\{i, i+1, \ldots, j\}$ for two nonnegative integers $i$ and $j$ with $i \leq j$. It is obvious that the mapping 
\begin{eqnarray}
\varphi: b \mapsto \support(b) 
\end{eqnarray} 
is a one-to-one correspondence from $\gf(2)^n$ to $2^{[1..n]}$, which denotes the power set of $[1..n]$. 

Let $\cP=[1..n]$ be a set of $n \ge 1$ elements, and let $\cB=\{B_i: 1 \leq i \leq b\}$ be 
a set of $k$-subsets of $\cP$, where $k$ is a positive integer with $1 \leq k \leq n$, and $b$ is a  
positive integer. The pair $\bD = (\cP, \cB)$ is called an incidence structure. The \emph{characteristic function} 
of the incidence structure $\bD$, denoted by $f_{\bD}(x)$, is the Boolean function of $n$ variables with support 
\begin{eqnarray}\label{eqn-supportIncidance}
\left\{\varphi^{-1}(B_i): 1 \leq i \leq b \right\}.
\end{eqnarray}

We are now ready to present a spectral characterization of $t$-designs. 

\begin{theorem}\label{thm-maincharacterisationthm}
Let $\bD=(\cP, \cB)$ be an incidence structure, where the point set $\cP=[1..n]$,  
the block set $\cB=\{B_1, B_2, \ldots, B_b\}$, the block size $|B_i|$ is $k$, and $k$ and $b$ 
are positive integers. Then $\bD$ is a $t$-$(n, k, \lambda)$ design if and only if 
for each integer $h$ with $0 \leq h \leq t$, 
\begin{eqnarray}\label{eqn-necsuff}
\hat{f}_{\bD}(w)= \frac{\lambda \sum_{i=0}^h (-1)^i \binom{h}{i} \binom{n-h}{k-i} }{\binom{n-t}{k-t}} 
= \frac{\lambda P_k(h) }{\binom{n-t}{k-t}} 
\end{eqnarray} 
for all $w \in \gf(2)^n$ with $\wt(w)=h$. 
\end{theorem}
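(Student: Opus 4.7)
The starting observation is that for any $w\in \gf(2)^n$ with $\support(w)=W$ and any block $B_i$, one has $w\cdot \varphi^{-1}(B_i)\equiv |W\cap B_i|\pmod 2$, so
\[
\hat f_{\bD}(w)=\sum_{i=1}^{b}(-1)^{|W\cap B_i|}.
\]
Thus the whole statement is really a statement about intersection distributions of $W$ with the blocks. I would prove the two directions separately.

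\noindent\emph{Forward direction.}  Assume $\bD$ is a $t$-$(n,k,\lambda)$ design and fix $W\subseteq\cP$ with $|W|=h\le t$. Grouping the blocks by the value $j=|W\cap B_i|$, and further grouping by the specific $j$-subset $Y=W\cap B_i$, I would write
\[
\hat f_{\bD}(w)=\sum_{j=0}^{h}(-1)^{j}\!\!\sum_{\substack{Y\subseteq W\\|Y|=j}}\lambda_{(j,h-j)}.
\]
Since $j+(h-j)=h\le t$, Lemma \ref{lem-hdbooks} applies and gives $\lambda_{(j,h-j)}=\lambda\binom{n-h}{k-j}/\binom{n-t}{k-t}$, which is independent of the chosen $Y$. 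Pulling this out and using $\binom{h}{j}$ choices of $Y$ yields
\[
\hat f_{\bD}(w)=\frac{\lambda}{\binom{n-t}{k-t}}\sum_{j=0}^{h}(-1)^{j}\binom{h}{j}\binom{n-h}{k-j}.
\]
Because $\binom{h}{j}=0$ for $j>h$ (and $h\le t\le k$), the inner sum is exactly the defining expression of $P_k(h)$, completing the forward direction.

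\noindent\emph{Converse.}  Assume the spectral condition holds for all $w$ with $\wt(w)\le t$. For any $S\subseteq\cP$ let $\alpha_S=\#\{i:S\subseteq B_i\}$; my goal is to show that $\alpha_S$ depends only on $|S|$ when $|S|\le t$, and that the value at $|S|=t$ is $\lambda$. The key algebraic identity I would exploit is
\[
(-1)^{|W\cap B_i|}=\prod_{p\in W}\!\bigl(1-2\cdot\mathbb{1}_{p\in B_i}\bigr)=\sum_{S\subseteq W}(-2)^{|S|}\mathbb{1}_{S\subseteq B_i},
\]
which, summed over $i$, gives
\[
\hat f_{\bD}(w)=\sum_{s=0}^{h}(-2)^{s}\!\!\sum_{\substack{S\subseteq W\\|S|=s}}\alpha_S.
\]
I would then induct on $h=|W|$. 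The base case $h=0$ recovers $\alpha_\emptyset=b$. For the inductive step, all inner sums with $s<h$ are $\binom{h}{s}\alpha_s$ by the inductive hypothesis, while the $s=h$ term is the single quantity $(-2)^{h}\alpha_W$; since the left side is, by hypothesis, independent of the particular $W$ of weight $h$, so is $\alpha_W$. This produces constants $\lambda_0,\lambda_1,\dots,\lambda_t$ with $\alpha_S=\lambda_{|S|}$, which is exactly the $t$-design property. Finally, to pin down $\lambda_t=\lambda$, I would invoke the forward direction for the $t$-$(n,k,\lambda_t)$ design just constructed, compare with the prescribed value $\lambda P_k(h)/\binom{n-t}{k-t}$ at $h=0$ (where $P_k(0)=\binom{n}{k}\neq 0$), and conclude $\lambda_t=\lambda$.

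\noindent\emph{Main obstacle.}  The forward direction is essentially bookkeeping once Lemma \ref{lem-hdbooks} is invoked; the only slightly delicate point is the second equality in \eqref{eqn-necsuff}, which rests on matching the intersection-number sum with the definition of $P_k(h)$. The non-trivial content of the theorem lies in the converse, and the one step that requires care is the inversion from the values $\{\hat f_{\bD}(w):\wt(w)\le t\}$ back to the quantities $\alpha_S$; the expansion $(-1)^{|W\cap B_i|}=\sum_{S\subseteq W}(-2)^{|S|}\mathbb{1}_{S\subseteq B_i}$ makes this transparent and turns the argument into a clean induction on $|W|$.
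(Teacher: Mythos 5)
Your forward direction is the same as the paper's: group the blocks by the intersection pattern with $\support(w)$, apply Lemma \ref{lem-hdbooks} to each class, and recognise the resulting alternating sum as $P_k(h)$ (padding the sum from $h$ up to $k$ using $\binom{h}{i}=0$ for $i>h$). Your converse, however, is genuinely different and cleaner. The paper inducts on $h$ by tracking the single count $e=|\{j:|\support(w)\cap B_j|=h\}|$, expressing the lower intersection counts through an inclusion--exclusion recursion in the $\lambda_i$ and then evaluating a double sum to isolate $e$ with coefficient $(h+1)(-1)^h$. You instead expand $(-1)^{|W\cap B_i|}=\sum_{S\subseteq W}(-2)^{|S|}\mathbb{1}_{S\subseteq B_i}$, which writes $\hat f_{\bD}(w)$ directly as $\sum_{s}(-2)^{s}\sum_{|S|=s}\alpha_S$; the leading coefficient $(-2)^{h}$ on $\alpha_W$ is visibly nonzero, so constancy of the spectrum at weight $h$ immediately forces constancy of $\alpha_W$, and the induction is transparent. (This expansion is the combinatorial shadow of the first identity in Theorem \ref{thm-KP191}, so the two arguments are ultimately computing the same thing, but yours avoids the bookkeeping.) One further point in your favour: to pin down $\lambda$ you compare the two expressions at $h=0$, where $P_k(0)=\binom{n}{k}\neq 0$ is guaranteed, whereas the paper compares at $h=t$, where $P_k(t)$ can in principle vanish (e.g.\ $k=n/2$ with $t$ odd), so your closing step is actually the more careful one. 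The only blemishes are notational: in the inductive step the inner sums for $s<h$ equal $\binom{h}{s}\lambda_s$ (you wrote $\alpha_s$), and you should say explicitly that the base case $h=0$ of the induction needs no hypothesis since $\alpha_\emptyset=b$ trivially depends only on $|\emptyset|$. Neither affects correctness.
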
 

\begin{proof}
We first prove the necessity of the conditions in (\ref{eqn-necsuff}). Assume that $\bD$ is a $t$-$(n, k, \lambda)$ design. 
Let $w$ be 
a vector in $\gf(2)^n$ with $\wt(w)=h$, where $0 \leq h \leq t$. The inner product $w \cdot \varphi^{-1}(B_i)$ 
is given by 
$$ 
w \cdot \varphi^{-1}(B_i) = |\support(w) \cap B_i| \bmod 2.  
$$ 
Note that $|\support(w) \cap B_i|$ takes on only values in the following set 
$$ 
\{h, h-1, \ldots, 1, 0\}. 
$$
It then follows from Lemma \ref{lem-hdbooks} that 
$$ 
|\{1 \leq j \leq b: |\support(w) \cap B_j|=i\}|=\binom{h}{i} \lambda_{(i, h-i)}
=\frac{\lambda \binom{h}{i} \binom{n-h}{k-i}}{\binom{n-t}{k-t}},  
$$ 
where $0 \leq i \leq h$. Note that $h \leq t \leq k$. By convention, $\binom{h}{i}=0$ 
if $i >h$. 
We now deduce that 
\begin{eqnarray*}
\hat{f}_{\bD}(w) &=& 
\sum_{i=0}^h (-1)^i |\{1 \leq j \leq b: |\support(w) \cap B_j|=i\}| \\
&=& \frac{\lambda \sum_{i=0}^h (-1)^i \binom{h}{i} \binom{n-h}{k-i} }{\binom{n-t}{k-t}} \\ 
&=& \frac{\lambda \sum_{i=0}^k (-1)^i \binom{h}{i} \binom{n-h}{k-i} }{\binom{n-t}{k-t}} \\
&=& \frac{\lambda P_k(h) }{\binom{n-t}{k-t}} . 
\end{eqnarray*}
This proves the necessity of the conditions in (\ref{eqn-necsuff}).

We now prove the sufficiency of the conditions in (\ref{eqn-necsuff}) 
by induction. 
We first prove that $\bD$ is a $1$-$(n, k, \lambda_1)$ design. For each $w$ in $\gf(2)^n$ 
with weight $1$, the conditions in (\ref{eqn-necsuff}) in the case $h=1$ say that 
$$ 
\hat{f}_{\bD}(w)=\lambda \frac{P_k(1)}{\binom{n-t}{k-t}}.  
$$ 
The first alternative expression of the Krawtchouk polynomial given in Theorem \ref{thm-KP191} yields 
$$ 
P_k(1)=\binom{n}{k} -2 \binom{n-1}{k-1}. 
$$ 
We have then 
\begin{eqnarray}\label{eqn-May19a}
\hat{f}_{\bD}(w)=\lambda \frac{\binom{n}{k} - 2 \binom{n-1}{k-1}}{\binom{n-t}{k-t}}.  
\end{eqnarray} 
By the definition of binomial coefficients, 
\begin{eqnarray*}
\binom{n}{k}  \binom{k}{t} 
&=& \frac{n!}{k! (n-k)!}    \frac{k!}{t! (k-t)!} \\
&=&  \frac{n!}{t! (n-k)! (k-t)!} \\
&=&  \frac{n!}{t! (n-t)!}   \frac{(n-t)!}{(n-k)! (k-t)!}  \\
&=& \binom{n}{t}  \binom{n-t}{k-t}.    
\end{eqnarray*}  
Consequently, 
\begin{eqnarray}\label{eqn-May19b}
\frac{\binom{n}{k}}{\binom{n-t}{k-t}} = \frac{\binom{n}{t}}{\binom{k}{t}}. 
\end{eqnarray} 
Similarly, one can prove that 
\begin{eqnarray}\label{eqn-May19c}
\frac{\binom{n-1}{k-1}}{\binom{n-t}{k-t}} = \frac{\binom{n-1}{t-1}}{\binom{k-1}{t-1}}. 
\end{eqnarray} 
Plugging (\ref{eqn-May19b}) and  (\ref{eqn-May19c}) into (\ref{eqn-May19a}), we obtain 
$$ 
\hat{f}_{\bD}(w)=\lambda_0-2\lambda_1=b-2\lambda_1. 
$$
Suppose that $\support(w)=\{i\}$, where $1 \leq i \leq n$. Assume that $i$ is incident 
with $u$ blocks in $\cB$. It then follows from the definition of $\hat{f}_{\bD}(w)$ that 
$$ 
\hat{f}_{\bD}(w)=b-2u. 
$$
Consequently, $u=\lambda_1$, which is independent of $i$. By definition, $\bD$ is a 
$1$-$(n, k, \lambda_1)$ design. 

Suppose now that $\bD$ is an $s$-$(n, k, \lambda_s)$ design for all 
$s$ with $1 \leq s \leq h-1$ and $h \leq t$. We now prove that it is also an $h$-$(n, k, \lambda_h)$ 
design. Let $w$ be a vector in $\gf(2)^n$ with Hamming weight $h$. 
Let
$$ 
e = |\{1 \leq j \leq b: |\support(w) \cap B_j| =h\}|.
$$ 
Then by induction hypothesis,  
we have 
\begin{eqnarray*}
\lefteqn{|\{1 \leq j \leq b: |\support(w) \cap B_j| = i\}|} \\ 
&=& \binom{h}{i} \lambda_{i} - 
\binom{h}{i+1}\lambda_{i+1} + \cdots + (-1)^{h-1-i} \binom{h}{h-1} \lambda_{h-1} 
+(-1)^{h-i} \binom{h}{h} e\\
&=& \sum_{j=i}^{h-1} \binom{h}{j}(-1)^{j-i} \lambda_{j}+(-1)^{h-i}e
\end{eqnarray*}
for all $i$ with $0 \leq i \leq h-1$. 
As a result, we obtain 
\begin{eqnarray*}
\hat{f}_{\bD}(w) 
&=& \sum_{i=1}^b (-1)^{\varphi^{-1}(B_i) \cdot w} \\
&=& \sum_{i=0}^h  (-1)^i  |\{1 \leq j \leq b: |\support(w) \cap B_j|=i\}|  \\
&=& (-1)^h  |\{1 \leq j \leq b: |\support(w) \cap B_j|=h\}| \\ 
& & + \sum_{i=0}^{h-1}  (-1)^i  |\{1 \leq j \leq b: |\support(w) \cap B_j|=i\}|   \\ 
&=& (-1)^h  e  + \sum_{i=0}^{h-1}  (-1)^i  \left\{\sum_{j=i}^{h-1} \binom{h}{j}(-1)^{j-i} \lambda_{j}+(-1)^{h-i}e \right\}  \\ 
&=& (-1)^h  e  
+ \sum_{i=0}^{h-1} \sum_{j=i}^{h-1} \binom{h}{j}(-1)^{j} \lambda_{j} +
\sum_{i=0}^{h-1} (-1)^h e \\
&=& (-1)^h  e  
+ \sum_{i=0}^{h-1} \sum_{j=i}^{h-1} \binom{h}{j}(-1)^{j} \lambda_{j} + h (-1)^{h} e \\
&=& (h+1) (-1)^{h} e + \sum_{i=0}^{h-1} \sum_{j=i}^{h-1} \binom{h}{j}(-1)^{j} \lambda_{j}.
\end{eqnarray*} 
By the conditions in (\ref{eqn-necsuff}), $\hat{f}_{\bD}(w)$ is a constant for all $w$ with $\wt(w)=h$. 
Since every quantity in the above equation is fixed except for $e$,
this value $e$ is also a constant for all $w$ with $\wt(w)=h$. 
Consequently, $\bD$ is an $h$-$(n, k, e)$ design.
By induction, $\bD$ is a $t$-$(n, k, \tilde{\lambda})$ design for some $\tilde{\lambda}$.
Thus for $w \in \gf(2)^{n}$ with Hamming weight $t$,
we have 
$$\hat{f}_{\bD}(w) = \frac{\tilde{\lambda} P_k(t) }{\binom{n-t}{k-t}} 
= \frac{\lambda P_k(t) }{\binom{n-t}{k-t}}$$
by the conditions in (\ref{eqn-necsuff}), and using the fact that $\bD$ is a $t$-$(n, k, \tilde{\lambda})$ design.
Thus we have $\tilde{\lambda} = \lambda$, and 
we can conclude that $\bD$ is a $t$-$(n, k, \lambda)$ design. 
The proof is then completed.    
\end{proof}

\begin{example}[Fano plane in finite geometry]\label{exam-fanofunc} 
Let $\cP=\{1, 2, 3, 4, 5, 6, 7\}$ and 
$$ 
\cB=\{\{1, 2, 3\}, \{1, 4, 5\}, \{1, 6, 7\}, \{2, 4, 7\}, \{2, 5, 6\},  \{3, 4, 6\}, \{3, 5, 7\}\}. 
$$ 
Then $\bD=(\cP, \cB)$ is a $2$-$(7, 3, 1)$ design, i.e., Steiner triple system $S(2, 3, 7)$. 

The characteristic function $f_{\bD}$ of $\bD$ is given by 
\begin{eqnarray*}
&&
 x_1  x_2  x_3  x_4  x_5  x_6  x_7 +  \\
&&  x_1  x_2  x_3  x_4 +  x_1  x_2  x_3  x_5 +  x_1  x_2  x_3  x_6 +  x_1  x_2  x_3  x_7 +  x_1  x_2  x_4  x_5 +
     x_1  x_2  x_4  x_7 +  x_1  x_2  x_5  x_6 +  \\
&&     x_1  x_2  x_6  x_7 +  x_1  x_3  x_4  x_5 +  x_1  x_3  x_4  x_6 +  x_1  x_3  x_5  x_7 +  x_1  x_3  x_6  x_7 +
     x_1  x_4  x_5  x_6 +  x_1  x_4  x_5  x_7  +  \\ 
&&     x_1  x_4  x_6  x_7 +  x_1  x_5  x_6  x_7 +  x_2  x_3  x_4  x_6 +  x_2  x_3  x_4  x_7 +
     x_2  x_3  x_5  x_6 +  x_2  x_3  x_5  x_7 +  x_2  x_4  x_5  x_6 +  \\ 
&&     x_2  x_4  x_5  x_7 +  x_2  x_4  x_6  x_7 +  x_2  x_5  x_6  x_7 + 
     x_3  x_4  x_5  x_6 +  x_3  x_4  x_5  x_7 +  x_3  x_4  x_6  x_7  +  x_3  x_5  x_6  x_7   + \\ 
&&      
 +  x_1  x_2  x_3 +  x_1  x_4  x_5 +  x_1  x_6  x_7 +  x_2  x_4  x_7 +  x_2  x_5  x_6 +  x_3  x_4  x_6 + x_3  x_5  x_7, 
\end{eqnarray*}
where the additions and multiplications are over $\gf(2)$. 

\end{example} 

\begin{example} 
Let $\cP=[1..12]$, and let $\bD=(\cP, \cB)$ be the Steiner system $S(5, 6, 12)$. Then the characteristic 
function of $\bD$ is given by 
\begin{eqnarray*}
f_{\bD}(x)= \sum_{(i_1, i_2, i_3, i_4, i_5, i_6) \in \cB} x_{i_1}x_{i_2}x_{i_3} x_{i_4}x_{i_5} x_{i_6} 
+ \sum_{(i_1, i_2, i_3, i_4, i_5, i_6, i_7) \in \binom{\cP}{7}} x_{i_1}x_{i_2}x_{i_3} x_{i_4}x_{i_5} x_{i_6}x_{i_7}, 
\end{eqnarray*} 
where $\binom{\cP}{7}$ denotes the set of all $7$-subsets of $[1..12]$. Hence, the algebraic form of 
$f_{\bD}(x)$ has $924$ terms, but looks interesting in the sense that it is compact and simple. 
\end{example}

\section{The algebraic normal form of the characteristic function $f_{\bD}$ of $t$-designs $\bD$} 

Recall that Boolean functions are real-valued functions taking on only 
the two integers $0$ and $1$. 
Let $f(x)$ be a Boolean function from $\gf(2)^n$ to $\{0,1\}$. Suppose that the support of $f$ 
is $\{v_1, \ldots, v_b\}$, where $b$ is a positive integer. Let $v_i=(v_{i,1}, v_{i,2}, \ldots, v_{i,n}) \in \gf(2)^n$ 
for each $i$, and let $\bar{v}_{i,j}=v_{i,j}+1 \in \gf(2)$ for all $i$ and $j$. Let $B(d,n)$ denote the set 
of all $d$-subsets of $[1..n]$.

By definition, 
\begin{eqnarray}\label{eqn-anf}
f(x) &=& \sum_{i=1}^b (x_1 +\bar{v}_{i, 1})(x_2 + \bar{v}_{i, 2}) \cdots (x_n + \bar{v}_{i, n}) \nonumber \\ 
     &=& \sum_{d=0}^n \sum_{\{i_1, \ldots, i_d\} \in B(d, n)} \left( \sum_{i=1}^b \prod_{j \in [1..n] \setminus \{i_1, \ldots, i_d\}} \bar{v}_{i,j}\right)   \prod_{j=1}^d x_{i_j},   
\end{eqnarray}
where all the additions and multiplications are over $\gf(2)$, and an empty product is defined to be $1$ by 
convention. The expression in (\ref{eqn-anf}) is called 
the \textit{algebraic normal form} of $f$. The expression $ \prod_{j=1}^d x_{i_j}$ is called a term of 
degree $d$ in the algebraic normal form, which appears in the form if and only if its coefficient is $1$.  

We will need the following lemma when we study the algebraic normal forms of the characteristic function $f_{\bD}$ of $t$-designs $\bD$ later \cite[p. 15]{BJL}.  

\begin{lemma}\label{lem-Stinson} 
Suppose that $(\cP, \cB)$ is a $t$-$(n, k, \lambda)$ design. Suppose that $Y \subseteq \cP$, 
where $|Y| = s \leq t$. Then there are exactly $\lambda_s$ blocks in $\cB$ that contain all 
the points in $Y$, where the $\lambda_s$ is defined in (\ref{eqn-fundms}).  
\end{lemma}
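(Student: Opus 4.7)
The plan is to give a short double-counting argument, which in fact essentially recovers Lemma \ref{lem-designbasicp}. Fix an $s$-subset $Y\subseteq\cP$ and let $N$ denote the number of blocks of $\cB$ that contain $Y$. The idea is to count in two ways the set
\[
\mathcal{S}=\{(T,B)\;:\;Y\subseteq T\subseteq B,\ |T|=t,\ B\in\cB\}.
\]

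First I would count $\mathcal{S}$ by choosing $T$ first. The number of $t$-subsets $T$ of $\cP$ containing the fixed $s$-set $Y$ is $\binom{n-s}{t-s}$, and the definition of a $t$-$(n,k,\lambda)$ design tells us that each such $T$ lies in exactly $\lambda$ blocks of $\cB$. This gives $|\mathcal{S}|=\lambda\binom{n-s}{t-s}$. Next I would count $\mathcal{S}$ by choosing $B$ first. A block $B$ contributes to $\mathcal{S}$ only if $Y\subseteq B$, and given such a $B$, the number of $t$-subsets $T$ with $Y\subseteq T\subseteq B$ is $\binom{k-s}{t-s}$, since we must adjoin $t-s$ further points from the $k-s$ points of $B\setminus Y$. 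Hence $|\mathcal{S}|=N\binom{k-s}{t-s}$.

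Equating the two counts gives
\[
N=\lambda\,\frac{\binom{n-s}{t-s}}{\binom{k-s}{t-s}}=\lambda_s,
\]
which is precisely the quantity defined in (\ref{eqn-fundms}). Since this value depends only on $s$ (and not on the specific choice of $Y$), we obtain the desired conclusion.

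There is no real obstacle here: the argument is the classical double-counting proof that a $t$-design is automatically an $s$-design for every $s\le t$, and the only thing to be careful about is the correct use of the hypothesis $s\le t$, which is what guarantees that $\binom{k-s}{t-s}$ is nonzero and that we may invoke the $t$-design property on $T$.
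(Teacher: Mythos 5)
Your double-counting argument is correct and is the standard proof of this fact; the paper itself states Lemma \ref{lem-Stinson} without proof, citing \cite[p.~15]{BJL}, so there is nothing to compare against beyond noting that your argument is the canonical one (and indeed the same computation underlying Lemma \ref{lem-designbasicp}). The one hypothesis you rightly flag, $s\le t\le k$, is exactly what makes both counts of $\mathcal{S}$ legitimate and $\binom{k-s}{t-s}$ nonzero, so the proof is complete.
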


\begin{theorem}\label{thm-anmprop}
Let $\bD=([1..n], \cB)$ be a $t$-$(n, k, \lambda)$ design, where $n \geq k \geq t \geq 1$, and let $f_{\bD}$ be the characteristic 
function of $\bD$. Then we have the following regarding the algebraic normal form of $f_{\bD}$: 
\begin{itemize}
\item All terms of degree no more than $k-1$ vanish. 
\item A term $x_{i_1} x_{i_2} \cdots x_{i_k}$ appears if and only if $\{i_1, i_2, \ldots, i_k\}$ is 
      a block in $\cB$, where $\{i_1, i_2, \ldots, i_k\}$ is a $k$-subset of $[1..n]$. Hence, there are exactly $b$ terms of degree $k$ in the algebraic normal form.  
\item For each $h$ with $1 \leq h \leq t$, either all terms of degree $n-h$ appear or none of them 
      appears, depending on the parity of $\overline{\lambda_h}$.        
\item The term $x_1x_2 \cdots x_n$ of degree $n$ appears if and only if $b$ is odd.  
\end{itemize}
\end{theorem}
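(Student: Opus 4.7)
The plan is to start from the algebraic normal form displayed in equation (\ref{eqn-anf}) and simplify the coefficient of a generic monomial in a combinatorial way. Writing $v_i = \varphi^{-1}(B_i)$, the entry $\bar v_{i,j}$ equals $1$ exactly when $j \notin B_i$. Therefore, for any $S = \{i_1, \ldots, i_d\} \subseteq [1..n]$, the product $\prod_{j \in [1..n]\setminus S} \bar v_{i,j}$ equals $1$ if and only if $v_{i,j} = 0$ for every $j \notin S$, which is exactly the condition $B_i \subseteq S$. Consequently the coefficient of $\prod_{j \in S} x_j$ in $f_{\bD}$ reduces to
\begin{equation*}
c_S \;=\; \bigl|\{\,i : B_i \subseteq S\,\}\bigr| \pmod 2.
\end{equation*}
This one identity is the engine that drives all four bullets; once it is in hand the remaining work is just interpretation.

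Next I would go through the four cases in order. For $d \le k-1$, no $k$-subset $B_i$ can sit inside a set of size $d$, so $c_S = 0$, proving the first bullet. For $d = k$, the containment $B_i \subseteq S$ with $|B_i| = |S| = k$ forces $B_i = S$; since $\bD$ is simple, $c_S = 1$ iff $S \in \cB$, and there are exactly $b$ such monomials, which handles the second bullet. For $d = n - h$ with $1 \le h \le t$, write $Y = [1..n] \setminus S$, a set of size $h \le t$. Then the blocks $B_i \subseteq S$ are precisely the blocks of $\bD$ disjoint from $Y$, i.e.\ the blocks counted by $\lambda_{(0,h)} = \overline{\lambda}_h$; equivalently, by Lemma~\ref{lem-Stinson} applied to the complementary design $\overline{\bD}$ (which is an $h$-$(n, n-k, \overline{\lambda}_h)$ design by Lemma~\ref{lem-complementdesign}), the number of blocks of $\overline{\bD}$ containing $Y$ is $\overline{\lambda}_h$. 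In particular $c_S \equiv \overline{\lambda}_h \pmod 2$ is the same for every $S$ of size $n-h$, giving the all-or-nothing dichotomy. Finally, for $d = n$ the unique set $S = [1..n]$ contains every block, so $c_{[1..n]} \equiv b \pmod 2$, which is the last bullet.

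There is essentially no obstacle; the proof is a short bookkeeping exercise once the coefficient is recognised as a count of blocks contained in $S$. The only mildly delicate point is the third bullet, where one must invoke either the basic intersection number identity $\lambda_{(0,h)} = \overline{\lambda}_h$ (listed immediately after Theorem~\ref{thmBJL101}) or, equivalently, Lemma~\ref{lem-Stinson} applied to $\overline{\bD}$; both are available, so the argument stays self-contained.
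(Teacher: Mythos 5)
Your proposal is correct and follows essentially the same route as the paper: both start from the expansion in (\ref{eqn-anf}), observe that $\bar v_{i,j}=1$ exactly when $j\notin B_i$ so that the coefficient of $\prod_{j\in S}x_j$ counts (mod $2$) the blocks contained in $S$, and then read off the four bullets using simplicity for degree $k$ and Lemma~\ref{lem-complementdesign} for degree $n-h$. The only difference is presentational — you isolate the identity $c_S=|\{i:B_i\subseteq S\}|\bmod 2$ once up front, whereas the paper re-derives the relevant instance inside each case — which is a slightly cleaner organisation of the same argument.
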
 

\begin{proof}
Let $\bD=(\cP, \cB)$, where $\cB=\{B_1, B_2, \ldots, B_b\}$. Let 
$$ 
\varphi^{-1}(B_i)=(b_{i,1}, b_{i,2}, \ldots, b_{i, n}) \in \gf(2)^n 
$$ 
for $1 \leq i \leq b$. Let $\bar{b}_{i,j}=b_{i,j}+1$ for all $i$ and $j$. 
Denote by $B_{(d, n)}$ the set of all $d$-subsets of $[1..n]$.  

It follows from (\ref{eqn-anf}) that  
\begin{eqnarray}
f_{\bD}(x) &=& \sum_{i=1}^b \prod_{j=1}^n \bar{b}_{i,j} + b \prod_{i=1}^n x_i \nonumber \\
& & + 
\sum_{d=1}^{n-1} \sum_{\{i_1, \ldots, i_d\} \in B_{(d, n)}} \left( \sum_{i=1}^b \prod_{j \in [1..n]  \setminus \{i_1, \ldots, i_d\}} \bar{b}_{i,j} \right) \prod_{j=1}^d x_{i_j}.    
\end{eqnarray}
Since $k \geq 1$, for each fixed $i$ one of $\bar{b}_{i,j}$ must be zero. Consequently, the constant 
term 
$$ 
 \sum_{i=1}^b \prod_{j=1}^n \bar{b}_{i,j}=0. 
$$ 
Note that the coefficient of the term $\prod_{h=1}^d x_{i_h}$ is 
\begin{eqnarray}\label{eqn-coeffsumm}
\sum_{i=1}^b \prod_{j \in [1..n] \setminus \{i_1, \ldots, i_d\}} \bar{b}_{i,j}. 
\end{eqnarray}
Consider now the case that $1 \leq d \leq k-1$. In this case, $n -d > n-k$. It then follows that 
$$ 
\prod_{j \in [1..n] \setminus \{i_1, \ldots, i_d\}} \bar{b}_{i,j}=0 
$$ 
for each $i$ with $1 \leq i \leq b$. We then deduce that the sum in (\ref{eqn-coeffsumm}) is zero. 
This completes the proof of the conclusion in the first part. 

We now prove the conclusion of the second part. Consider any $k$-subset $\{i_1, i_2, \ldots, i_k\}$ 
of $[1..n]$ and the corresponding product $x_{i_1} x_{i_2} \cdots x_{i_k}$ whose 
coefficient is 
\begin{eqnarray}\label{eqn-termkkk}
\sum_{i=1}^b \bar{b}_{i, j_1} \bar{b}_{i, j_2} \cdots \bar{b}_{i, j_{n-k}},  
\end{eqnarray} 
where $\{j_1, j_2, \ldots, j_{n-k}\}=[1..n]  \setminus \{i_1, i_2, \ldots, i_k\}$. 
Since $\bD$ is simple, the summation in (\ref{eqn-termkkk}) is $1$ if and only if 
for exactly one $i$ with $1 \leq i \leq b$ the vector $(b_{i, j_1}, b_{i, j_2}, \ldots, b_{i, j_{n-k}})$ 
is the all-zero vector, which is the same as that the vector $(b_{i, i_1}, b_{i, i_2}, \ldots, b_{i, i_{k}})$ 
is the all-one vector. The desired conclusion in the second part then follows.  

We then prove the conclusion in the third part. Let $1 \leq h \leq t$. The coefficient of the term 
$x_{i_1} x_{i_2} \cdots x_{i_{n-h}}$ is 
$$ 
\sum_{i=1}^b \prod_{u=1}^h \bar{b}_{i, j_u}, 
$$ 
where $\{j_1, j_2, \ldots, j_h\}=[1..n]  \setminus \{i_1, i_2, \ldots, i_{n-h}\}$. 
By Lemma \ref{lem-complementdesign}, the total number of $\varphi^{-1}(B_i)$ such that $\prod_{u=1}^h \bar{b}_{i, j_u}=1$ 
is equal to 
$ 
\overline{\lambda_h},
$  
which depends on $h$ and is independent of the specific elements in $\{j_1, j_2, \ldots, j_h\}$. 
Hence, the conclusion of the third part follows.  
The last conclusion is obvious. 
\end{proof} 

Note that Theorem \ref{thm-anmprop} does not give information on terms of degree between $k+1$ and 
$n-t-1$ in the algebraic normal form of $f_{\bD}(x)$ of a $t$-design $\bD$. In Example \ref{exam-fanofunc}, 
Theorem \ref{thm-anmprop} gives information on all terms of degree in $\{0,1,2,3, 5,6,7\}$, but not terms 
of degree $4$. In fact, in the algebraic normal form in Example \ref{exam-fanofunc} only $28$ out of $35$ 
terms of degree $4$ appear.

\section{Properties of the spectra of the characteristic function $f_{\bD}$ of $t$-designs} 

Our task in this section is to provide further information on the spectra of the characteristic 
function $f_{\bD}$ of $t$-designs, in addition to the information given in Theorem \ref{thm-maincharacterisationthm}. Such information may be useful in settling the existence of certain $t$-designs.

The following lemma will be employed later in this paper, and can be proved easily.  

\begin{lemma}\label{lem-WalshProperties} 
Let $f(x)$ be a Boolean function with $n$ variables. Then 
\begin{enumerate}
\item $\sum_{w \in \gf(2)^n} \hat{f}(w) =2^n f(0)$; and 
\item $\sum_{w \in \gf(2)^n} \hat{f}(w)^2 = 2^{n} \sum_{z \in \gf(2)^n} f(z) =2^n \wt(f)$.
\end{enumerate} 
\end{lemma}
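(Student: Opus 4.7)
The plan is to derive both identities directly from the definition of the Walsh transform in (\ref{eqn-firstWalshT}) together with the inverse formula in (\ref{eqn-101010}). Neither identity requires anything beyond an exchange of summation order and the orthogonality of the characters of $\gf(2)^n$, so I do not anticipate any serious obstacle; the routine calculations below will suffice.

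For part (1), the cleanest route is to specialise the inversion formula (\ref{eqn-101010}) to $x = \bzero$. Since $w \cdot \bzero = 0$ for every $w \in \gf(2)^n$, we obtain
\begin{eqnarray*}
f(\bzero) \;=\; \frac{1}{2^n} \sum_{w \in \gf(2)^n} \hat{f}(w) (-1)^{w \cdot \bzero} \;=\; \frac{1}{2^n} \sum_{w \in \gf(2)^n} \hat{f}(w),
\end{eqnarray*}
which upon multiplication by $2^n$ gives the required identity.

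For part (2), I would expand the square using the definition of $\hat{f}(w)$, interchange the three resulting sums, and apply the standard orthogonality relation
\begin{eqnarray*}
\sum_{w \in \gf(2)^n} (-1)^{w \cdot u} \;=\; \begin{cases} 2^n, & u = \bzero, \\ 0, & u \neq \bzero, \end{cases}
\end{eqnarray*}
which is immediate from the fact that for any nonzero $u$, exactly half of the vectors $w$ satisfy $w \cdot u = 0$. Concretely,
\begin{eqnarray*}
\sum_{w \in \gf(2)^n} \hat{f}(w)^2
&=& \sum_{w} \sum_{x, y \in \gf(2)^n} f(x) f(y) (-1)^{w \cdot (x+y)} \\
&=& \sum_{x, y \in \gf(2)^n} f(x) f(y) \sum_{w \in \gf(2)^n} (-1)^{w \cdot (x+y)} \\
&=& 2^n \sum_{x \in \gf(2)^n} f(x)^2.
\end{eqnarray*}
Since $f$ is $\{0,1\}$-valued, $f(x)^2 = f(x)$, and $\sum_x f(x) = |\support(f)| = \wt(f)$, yielding both forms of the second identity. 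The only place any care is needed is in justifying the interchange of summations, which is automatic because all sums are finite.
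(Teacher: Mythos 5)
Your proof is correct. The paper omits a proof entirely (it merely remarks that the lemma ``can be proved easily''), and your argument --- inversion at $x=\bzero$ for part (1) and Parseval via character orthogonality for part (2) --- is exactly the standard computation the authors have in mind; nothing needs to be added.
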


\begin{lemma}\label{lem-complementaryWalsh} 
Let $\bD=(\cP, \cB)$ be an incidence structure, where the point set $\cP=[1..n]$,  
the block set $\cB=\{B_1, B_2, \ldots, B_b\}$, the block size $|B_i|$ is $k$, and $k$ and $b$ 
are positive integers. Let $f_{\bD}$ be the characteristic function of $\bD$. Then 
\begin{eqnarray}
\hat{f}_{\bD}(\bar{w})=(-1)^k \hat{f}_{\bD}(w), 
\end{eqnarray}
where $w \in \gf(2)^n$ and $\bar{w}=\bone + w$ which is the complement of $w$. 
\end{lemma}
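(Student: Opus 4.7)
The plan is to unfold both Walsh transforms directly from the definition and exploit the fact that every block has the same Hamming weight $k$, so that replacing $w$ by its complement introduces a uniform sign $(-1)^k$ on every term of the sum.

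First, I would observe that since the support of $f_{\bD}$ is $\{\varphi^{-1}(B_i):1\le i\le b\}$, the Walsh transform simplifies to
\[
\hat{f}_{\bD}(w)=\sum_{x\in\gf(2)^n}f_{\bD}(x)(-1)^{w\cdot x}=\sum_{i=1}^{b}(-1)^{w\cdot\varphi^{-1}(B_i)}.
\]
Applied to $\bar w=\mathbf{1}+w$ this gives $\hat{f}_{\bD}(\bar w)=\sum_{i=1}^{b}(-1)^{(\mathbf{1}+w)\cdot\varphi^{-1}(B_i)}$.

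Next I would compute $\mathbf{1}\cdot\varphi^{-1}(B_i)$. Since $\varphi^{-1}(B_i)$ is the characteristic vector of the block $B_i$, this inner product equals $|B_i|\bmod 2=k\bmod 2$. Therefore $(\mathbf{1}+w)\cdot\varphi^{-1}(B_i)\equiv k+w\cdot\varphi^{-1}(B_i)\pmod 2$ for every $i$, and the common factor $(-1)^k$ can be pulled out of the sum:
\[
\hat{f}_{\bD}(\bar w)=(-1)^{k}\sum_{i=1}^{b}(-1)^{w\cdot\varphi^{-1}(B_i)}=(-1)^{k}\hat{f}_{\bD}(w).
\]

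There is no real obstacle here; the only ingredient beyond the definition of $\hat{f}_{\bD}$ is the uniform block size. The brevity of the argument is actually the point: the constancy of $|B_i|=k$ forces the sign change produced by complementation to be the same on every block, so the Walsh spectrum of $f_{\bD}$ has a clean symmetry $w\leftrightarrow\bar w$ with a global sign $(-1)^k$. This will be useful in later sections, for instance when pairing spectral values of weight $h$ with those of weight $n-h$.
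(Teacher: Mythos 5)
Your proof is correct and follows essentially the same route as the paper: both reduce $\hat{f}_{\bD}$ to a sum of $(-1)^{w\cdot\varphi^{-1}(B_i)}$ over the blocks and use $\bone\cdot\varphi^{-1}(B_i)=|B_i|\bmod 2=k\bmod 2$ to extract the global sign $(-1)^k$. Your write-up merely spells out the intermediate step the paper leaves implicit.
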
 

\begin{proof}
Note that $|B_i|=k$ for each $i$ with $1 \leq i \leq b$. By definition, we have 
\begin{eqnarray*}
\hat{f}_{\bD}(\bar{w})=\sum_{i=1}^b (-1)^{(\bone +w) \cdot \varphi^{-1}(B_i)} = (-1)^k \sum_{i=1}^b (-1)^{w \cdot \varphi^{-1}(B_i)} 
= (-1)^k \hat{f}_{\bD}(w). 
\end{eqnarray*}
\end{proof}

\begin{theorem}\label{thm-zerospectrum}
Let $\bD=(\cP, \cB)$ be a $t$-$(n, n/2, \lambda)$ design, where $n$ is even. Then $\hat{f}_{\bD}(w)=0$ for all $w \in \gf(2)^n$ with 
$\wt(w)$ being odd and $1 \leq \wt(w) \leq t$.  
\end{theorem}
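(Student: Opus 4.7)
The plan is to apply the spectral characterisation (Theorem \ref{thm-maincharacterisationthm}) directly and reduce the claim to a vanishing statement about a Krawtchouk polynomial value, which then follows from the symmetry identity in Theorem \ref{thm-ms21}.

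First, I would note that since $\bD$ is a $t$-$(n, n/2, \lambda)$ design, Theorem \ref{thm-maincharacterisationthm} tells us that for every $w \in \gf(2)^n$ with $\wt(w) = h$ and $0 \leq h \leq t$,
\[
\hat{f}_{\bD}(w) \;=\; \frac{\lambda \, P_{n/2}(h)}{\binom{n-t}{\,n/2-t\,}}.
\]
Since the denominator is a nonzero positive integer, vanishing of $\hat{f}_{\bD}(w)$ for odd $h$ in the range $1 \leq h \leq t$ is equivalent to the vanishing of $P_{n/2}(h)$ for such $h$.

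Next, I would invoke the symmetry relation in the second bullet of Theorem \ref{thm-ms21}, namely
\[
P_k(i) \;=\; (-1)^i P_{n-k}(i), \qquad 0 \leq i \leq n.
\]
Specialising to $k = n/2$ (which forces $n - k = n/2$), this becomes
\[
P_{n/2}(h) \;=\; (-1)^h P_{n/2}(h).
\]
If $h$ is odd, then $(-1)^h = -1$, hence $2 P_{n/2}(h) = 0$, so $P_{n/2}(h) = 0$ for every odd $h$ (in particular for every odd $h$ with $1 \leq h \leq t$).

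Combining these two observations finishes the argument: for any $w \in \gf(2)^n$ with odd Hamming weight $h$ satisfying $1 \leq h \leq t$, we have $P_{n/2}(h) = 0$ and therefore $\hat{f}_{\bD}(w) = 0$. There is no genuine obstacle here; the entire content is in recognising that the hypothesis $k = n/2$ forces $k = n-k$, which turns the general Krawtchouk symmetry into an involution that kills the odd-index values. Everything else is a direct substitution into the spectral formula already established.
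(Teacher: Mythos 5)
Your proof is correct and follows exactly the paper's argument: the paper likewise deduces $P_{n/2}(i)=0$ for odd $i$ from the symmetry $P_k(i)=(-1)^iP_{n-k}(i)$ in Theorem \ref{thm-ms21} and then applies Theorem \ref{thm-maincharacterisationthm}. You have simply written out the two substitution steps the paper leaves implicit.
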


\begin{proof}
By the second part of Theorem \ref{thm-ms21}, $P_{n/2}(i)=0$ for all odd $i$ with $0 \leq i \leq n$. The desired conclusion then 
follows from Theorem \ref{thm-maincharacterisationthm}.  
\end{proof}

\section{The spectra of the characteristic function $f_{\bD}(x)$ of $\frac{n-2}{2}$-$\left(n, \frac{n}{2}, 1\right)$ designs}\label{sec-SpectraSteinerAll} 

In this section, we determine the spectra of the characteristic 
function $f_{\bD}$ of $\frac{n-2}{2}$-$\left(n, \frac{n}{2}, 1\right)$ designs.

\subsection{Necessary conditions for the existence of a $t$-$(n, k, \lambda)$ design}

As a corollary of Lemma \ref{lem-designbasicp}, we have the following. 

\begin{corollary}\label{cor-divisibility} 
If a $t$-$(n, k, \lambda)$ design exists, then 
\begin{eqnarray}\label{eqn-tdesignnecessty}
\binom{k-i}{t-i} \mbox{ divides } \lambda \binom{n-i}{t-i}
\end{eqnarray}
for all integer $i$ with $0 \leq i \leq t$. 
\end{corollary}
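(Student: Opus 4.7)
The plan is to derive the divisibility condition directly from Lemma \ref{lem-designbasicp} by invoking the fact that the block-counting numbers $\lambda_s$ must be non-negative integers. First I would fix an integer $i$ with $0 \le i \le t$ and apply Lemma \ref{lem-designbasicp} with $s = i$ to conclude that the underlying $t$-$(n,k,\lambda)$ design is simultaneously an $i$-$(n,k,\lambda_i)$ design with
\[
\lambda_i \;=\; \lambda \,\frac{\binom{n-i}{t-i}}{\binom{k-i}{t-i}}.
\]

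Next, I would appeal to the combinatorial meaning of $\lambda_i$: it counts the number of blocks through a fixed $i$-subset of $\cP$, and therefore is a non-negative integer. Consequently the rational expression on the right must be an integer, which is precisely the statement that $\binom{k-i}{t-i}$ divides $\lambda\binom{n-i}{t-i}$. Since $i$ was arbitrary in $[0..t]$, the divisibility condition (\ref{eqn-tdesignnecessty}) holds for every such $i$, completing the argument.

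There is essentially no obstacle here; the only subtlety worth flagging is that the denominator $\binom{k-i}{t-i}$ is nonzero for $0 \le i \le t \le k$, so the quotient is well-defined before one invokes integrality. The boundary cases $i=t$ (yielding $\binom{k-t}{0}=1 \mid \lambda$, trivially true) and $i=0$ (yielding $\binom{k}{t} \mid \lambda\binom{n}{t}$, the usual necessary condition on $b$) should be noted as consistency checks rather than as separate arguments.
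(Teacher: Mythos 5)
Your proof is correct and follows exactly the route the paper intends: Corollary \ref{cor-divisibility} is stated there as an immediate consequence of Lemma \ref{lem-designbasicp}, with the integrality of $\lambda_i$ (as a count of blocks through a fixed $i$-subset) supplying the divisibility. Your added remarks on the nonvanishing denominator and the boundary cases $i=0$ and $i=t$ are sound consistency checks but not needed beyond that.
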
 

As a corollary of Theorem \ref{thm-maincharacterisationthm}, we have also the following. 

\begin{corollary}\label{cor-divisibility2}  
If a $t$-$(n, k, \lambda)$ design exists, then 
\begin{eqnarray}\label{eqn-tdesignnecessty2}
\binom{n-t}{k-t} \mbox{ divides } \lambda P_k(h)
\end{eqnarray}
for all integer $h$ with $0 \leq h \leq t$. 
\end{corollary}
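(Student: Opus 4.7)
The plan is to observe that this is essentially immediate from the spectral characterisation, provided one notes the integrality of the Walsh transform values.

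First, I recall that by definition (equation (\ref{eqn-firstWalshT}) applied to $f_{\bD}$), for any $w \in \gf(2)^n$,
\begin{eqnarray*}
\hat{f}_{\bD}(w) = \sum_{i=1}^{b} (-1)^{w \cdot \varphi^{-1}(B_i)}.
\end{eqnarray*}
Each summand is $\pm 1$, so $\hat{f}_{\bD}(w)$ is an integer for every $w$.

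Now suppose a $t$-$(n, k, \lambda)$ design $\bD$ exists. For any integer $h$ with $0 \leq h \leq t$, pick any $w \in \gf(2)^n$ with $\wt(w) = h$ (such $w$ exists since $h \leq t \leq k \leq n$). Theorem \ref{thm-maincharacterisationthm} then gives
\begin{eqnarray*}
\hat{f}_{\bD}(w) = \frac{\lambda P_k(h)}{\binom{n-t}{k-t}}.
\end{eqnarray*}
Since the left-hand side is an integer, the right-hand side is too, which is precisely the assertion that $\binom{n-t}{k-t}$ divides $\lambda P_k(h)$.

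There is essentially no obstacle here; the only thing worth being careful about is the trivial boundary case $h = 0$, where $w$ is the zero vector and $\hat{f}_{\bD}(0) = b$, which matches $\lambda P_k(0)/\binom{n-t}{k-t} = \lambda \binom{n}{k}/\binom{n-t}{k-t} = \lambda \binom{n}{t}/\binom{k}{t} = b$ by Lemma \ref{lem-designbasicp} (using the identity (\ref{eqn-May19b}) that was derived in the proof of Theorem \ref{thm-maincharacterisationthm}), so the statement is consistent at $h=0$. For $h \geq 1$ the claim is a new divisibility constraint coming from the $\pm 1$-valued nature of the Walsh summands, complementary to the classical divisibility conditions of Corollary \ref{cor-divisibility}.
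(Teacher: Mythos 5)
Your proof is correct and matches the paper's intended argument: the paper simply states the corollary as an immediate consequence of Theorem \ref{thm-maincharacterisationthm}, the implicit point being exactly the one you make explicit, namely that $\hat{f}_{\bD}(w)$ is an integer (a sum of $\pm 1$ terms) while the theorem forces it to equal $\lambda P_k(h)/\binom{n-t}{k-t}$. Your added care at $h=0$ is a harmless consistency check and does not change the argument.
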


Note that the divisibility conditions in (\ref{eqn-tdesignnecessty}) should be equivalent to 
those in (\ref{eqn-tdesignnecessty2}) if a $t$-$(n, k, \lambda)$ exists. It is open if they are 
equivalent.

The next result is a special case of Corollary \ref{cor-divisibility} \cite[p. 102]{HBComb}, 
and is equivalent to the conditions in Corollary \ref{cor-divisibility}.  

\begin{corollary}\label{cor-divisibilitySpecial} 
If a $t$-$(n, t+1, 1)$ design exists, then 
\begin{eqnarray}\label{eqn-tdesignnecesstySpecial}
\gcd(n-t, \lcm(1,2, \ldots, t+1))=1. 
\end{eqnarray} 
\end{corollary}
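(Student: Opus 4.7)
The plan is to prove the contrapositive: I will assume that some prime $p$ with $p \leq t+1$ divides $n-t$ and derive a contradiction with Corollary \ref{cor-divisibility}. Since $\lcm(1,2,\ldots,t+1)$ is exactly the product of the maximal prime powers not exceeding $t+1$, the gcd in (\ref{eqn-tdesignnecesstySpecial}) equals $1$ iff no prime $p \leq t+1$ divides $n-t$.

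The key step is to choose the index $i$ in Corollary \ref{cor-divisibility} so that the left-hand side $\binom{k-i}{t-i}$ becomes exactly $p$. With $k=t+1$ and $\lambda=1$, we have $\binom{k-i}{t-i} = \binom{t+1-i}{t-i} = t+1-i$. Setting $i = t-p+1$ (which is a legal choice because $0 \le i \le t$ whenever $1 \le p \le t+1$), this left-hand side becomes $p$. The divisibility condition from Corollary \ref{cor-divisibility} then reads
\begin{equation*}
p \,\,\Big|\,\, \binom{n-i}{t-i} \;=\; \binom{n-t+p-1}{\,p-1\,}.
\end{equation*}

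The main obstacle, and the heart of the argument, is to show that this divisibility actually fails when $p \mid n-t$. I would do this by reducing modulo $p$:
\begin{equation*}
\binom{n-t+p-1}{p-1} \;=\; \frac{(n-t+1)(n-t+2)\cdots(n-t+p-1)}{(p-1)!}.
\end{equation*}
Since $n-t \equiv 0 \pmod p$, the factors in the numerator are congruent to $1, 2, \ldots, p-1 \pmod p$, so their product is congruent to $(p-1)! \pmod p$. Dividing by $(p-1)!$ (which is a unit mod $p$ by Wilson's theorem or simply by being coprime to $p$), we obtain $\binom{n-t+p-1}{p-1} \equiv 1 \pmod p$. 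In particular $p \nmid \binom{n-t+p-1}{p-1}$, contradicting the necessary condition from Corollary \ref{cor-divisibility}.

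Hence no prime $p \le t+1$ divides $n-t$, which is exactly the statement $\gcd(n-t, \lcm(1,2,\ldots,t+1)) = 1$. The argument is short once the right value of $i$ is chosen; the only subtlety is the index bookkeeping to ensure the chosen $i$ lies in the admissible range $0 \leq i \leq t$, which is automatic from $1 \leq p \leq t+1$.
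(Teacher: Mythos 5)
Your proof is correct, and it follows exactly the route the paper indicates: the paper states this corollary without proof, merely noting that it is a special case of Corollary \ref{cor-divisibility} and citing the Handbook, whereas you actually supply the derivation (the choice $i=t-p+1$ making $\binom{k-i}{t-i}=p$, and the computation $\binom{n-t+p-1}{p-1}\equiv 1 \pmod p$ when $p\mid n-t$). The index bookkeeping and the reduction mod $p$ are both sound, so this fills in the details the paper omits rather than taking a different approach.
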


The following follows from Corollary \ref{cor-divisibilitySpecial}.  

\begin{theorem}\label{thm-fourthfactor}
If an $(n-2)/2$-$(n, n/2, 1)$ design exists for even $n \geq 4$, then $n \equiv 0 \pmod{4}$ and 
$(n+2)/2$ is a prime. 
\end{theorem}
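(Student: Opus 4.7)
The plan is to apply Corollary \ref{cor-divisibilitySpecial} directly, with the specializations $t = (n-2)/2$ and $k = t+1 = n/2$, and then extract the two claimed consequences by elementary number-theoretic reasoning.

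First, note that with $t = (n-2)/2$, we have $n - t = (n+2)/2$ and $t+1 = n/2$. Corollary \ref{cor-divisibilitySpecial} therefore tells us that the existence of such a design forces
\[
\gcd\!\left(\tfrac{n+2}{2},\ \lcm(1,2,\ldots,\tfrac{n}{2})\right)=1.
\]
Set $m := (n+2)/2$, so the condition reads $\gcd(m,\lcm(1,\ldots,m-1))=1$. Because $n\geq 4$ we have $m\geq 3$.

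Second, I would derive primality of $m$. Suppose $m$ were composite (or $1$, which is ruled out by $m \geq 3$). Then $m$ admits a prime divisor $p$ with $p \leq m-1 = n/2$, whence $p \mid \lcm(1,\ldots,n/2)$, contradicting the gcd condition. Hence $m = (n+2)/2$ is prime.

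Third, I would rule out $n \equiv 2 \pmod{4}$. Since $n$ is even, either $n \equiv 0$ or $n \equiv 2 \pmod 4$. In the latter case, $n/2$ is odd, so $m = n/2 + 1$ is even; combined with $n\geq 6$ (since $n=2$ is excluded by $n\geq 4$ and $n\equiv 2 \pmod 4$ means $n\neq 4$), this gives $m \geq 4$ and $m$ even, contradicting primality. Therefore $n \equiv 0 \pmod{4}$.

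There is no genuine obstacle here: the statement is a clean specialization of Corollary \ref{cor-divisibilitySpecial} followed by the observation that the only $m \geq 3$ coprime to every smaller positive integer is a prime, plus a parity check. The only thing to be mildly careful about is the small case $n=4$, where $m = 3$ is prime and $n \equiv 0 \pmod 4$, so both conclusions hold vacuously in the sense that the argument goes through without modification.
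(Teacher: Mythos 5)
Your proof is correct and follows exactly the route the paper intends: the paper simply asserts that the theorem ``follows from Corollary \ref{cor-divisibilitySpecial},'' and your specialization $t=(n-2)/2$, $k=t+1=n/2$, $n-t=(n+2)/2$ together with the elementary observations (a composite $m\ge 3$ shares a prime factor with $\lcm(1,\ldots,m-1)$, and an even $m\ge 4$ cannot be prime) supplies precisely the omitted details.
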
 

Later in this paper we will make use of the fact that $n \equiv 0 \pmod{4}$ from time to time. 
The next two theorems are from \cite[p. 102]{HBComb}, and document some necessary conditions of 
the existence of Steiner systems. These bounds are derived from the Johnson bounds for constant 
weight codes. 

\begin{theorem}
If a $t$-$(n, k, 1)$ design exists, then 
\begin{eqnarray}\label{eqn-filter2}
\binom{k}{t-1} \frac{k-t}{n-k-1} \leq 
\left\lfloor \frac{k}{t-1} \left\lfloor \frac{k-1}{t-2} \left\lfloor \cdots \left\lfloor  \frac{k-t+3}{2}    \right\rfloor \cdots \right\rfloor \right\rfloor \right\rfloor 
\end{eqnarray} 
and 
\begin{eqnarray}\label{eqn-filter3}
\binom{k}{k-t+1} \frac{k-t}{n-k-1} \leq 
\left\lfloor \frac{k}{k-t+1} \left\lfloor \frac{k-1}{k-t} \left\lfloor \cdots \left\lfloor  \frac{t+1}{2}    \right\rfloor \cdots \right\rfloor \right\rfloor \right\rfloor.  
\end{eqnarray} 
\end{theorem}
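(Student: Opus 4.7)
The plan is to translate the block set of a $t$-$(n,k,1)$ design into a binary constant-weight code and then invoke the Johnson bound for constant-weight codes, as the paragraph preceding the theorem hints. First, I would associate to each block $B_i$ its characteristic vector $\varphi^{-1}(B_i) \in \gf(2)^n$. These form a binary code $C$ of length $n$, constant weight $k$, and $|C| = b = \binom{n}{t}/\binom{k}{t}$ codewords, the latter equality coming from Lemma~\ref{lem-designbasicp}. Since $\lambda = 1$, any two distinct blocks meet in at most $t-1$ points (otherwise a common $t$-subset would lie in two blocks), so distinct codewords have Hamming distance at least $2(k-t+1)$. Setting $\delta := k-t+1$ and $d := 2\delta$, I obtain $b \leq A(n, d, k)$, where $A(n,d,w)$ denotes the maximum size of a binary constant-weight-$w$ code of length $n$ and minimum distance $d$.

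Next, I would invoke two forms of the Johnson bound. The first iterates
$$A(n, 2\delta, w) \;\leq\; \left\lfloor (n/w)\,A(n-1, 2\delta, w-1)\right\rfloor,$$
with base case $A(m, 2\delta, \delta) = \lfloor m/\delta \rfloor$, which is valid because weight-$\delta$ codewords at mutual distance $\geq 2\delta$ have pairwise disjoint supports. Iterating down from $w=k$ to $w=\delta$ produces a nested-floor upper bound on $A(n,2\delta,k)$. The second form is obtained by applying the same recursion to the complementary code (constant weight $n-k$, same minimum distance) and invoking the symmetry $A(n,d,k) = A(n,d,n-k)$.

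To match these Johnson bounds against the stated inequalities, I would insert $b = \binom{n}{t}/\binom{k}{t}$ on the left-hand side and then algebraically reorganize the right-hand side. The idea is to collect all $n$-dependent factors into the single outer coefficient $(k-t)/(n-k-1)$, which, after multiplying by $\binom{k}{t-1}$ (or $\binom{k}{k-t+1}$), becomes the displayed left-hand factor. The remaining nested floors then reindex into ratios $(k-i)/(t-1-i)$ (respectively $(k-i)/(k-t+1-i)$), giving the right-hand sides of (3.7) and (3.8). Inequality (3.7) comes from the first Johnson form; (3.8) arises analogously from the complementary version, exploiting $\binom{k}{t-1} = \binom{k}{k-t+1}$.

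The main obstacle is the delicate floor-function bookkeeping in this last step. Because $\lfloor A/B\rfloor$ generally differs from $A/B$, the rearrangement cannot be purely algebraic: one must use that the intermediate replication numbers $\lambda_i$ from Lemma~\ref{lem-designbasicp} are integers whenever the design actually exists, which allows certain floors to be dropped, factored, or reindexed at the right moments. Keeping track of which pieces of the Johnson recursion land inside the innermost floors and which collapse into the outer coefficient $\binom{k}{t-1}(k-t)/(n-k-1)$ is the combinatorial heart of the argument, and this is precisely the computation recorded in \cite[p.~102]{HBComb}, from which the theorem is quoted.
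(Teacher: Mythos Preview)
The paper itself does not prove this theorem; it merely quotes it from \cite[p.~102]{HBComb} with the remark that ``these bounds are derived from the Johnson bounds for constant weight codes.'' So there is no in-paper proof to compare against, and your overall plan of invoking Johnson is consistent with the authors' one-line hint.

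That said, your concrete implementation has a real gap. Applying the iterated Johnson bound directly to the length-$n$ constant-weight code $C=\{\varphi^{-1}(B_i)\}$ with $\delta=k-t+1$ yields
\[
b \;\le\; \left\lfloor \frac{n}{k}\left\lfloor \frac{n-1}{k-1}\cdots\left\lfloor \frac{n-t+1}{k-t+1}\right\rfloor\cdots\right\rfloor\right\rfloor,
\]
but a Steiner system attains this bound \emph{with equality}: each partial product is an integer $\lambda_i$ by Lemma~\ref{lem-designbasicp}, so every floor is inert and the right-hand side collapses to $b$ itself. No amount of ``algebraic reorganisation'' will extract a nontrivial inequality from $b\le b$, and you certainly cannot shuffle $n$-dependent factors through nested floors to leave only $k$'s inside. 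The obstacle you flag as ``delicate bookkeeping'' is in fact fatal for this route.

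Look at the right-hand sides of (\ref{eqn-filter2}) and (\ref{eqn-filter3}): they are precisely the Johnson bounds for $A(k,4,t-1)$ and $A(k,4,k-t+1)$, i.e.\ for codes of length $k$, not $n$, and with $\delta=2$. So the argument must manufacture, from the Steiner system, an auxiliary constant-weight code living inside a single block (length $k$) with minimum distance $4$ and at least $\binom{k}{t-1}\frac{k-t}{n-k-1}$ codewords; the two inequalities then follow from the two complementary Johnson recursions applied to \emph{that} code. Your proposal never constructs this auxiliary code, and that is the missing idea.
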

 
\begin{theorem}
Let $t=2h+\delta$ with $\delta \in \{0, 1\}$. If a $t$-$(n, k, 1)$ design exists, then 
\begin{eqnarray}\label{eqn-filter4}
\binom{n}{t} \geq \left( \frac{n}{k}\right)^{\delta} \binom{n-\delta}{h} \binom{k}{t}. 
\end{eqnarray}
\end{theorem}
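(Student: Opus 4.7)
The plan is to rewrite (\ref{eqn-filter4}) as a lower bound on the block count and then apply the Ray--Chaudhuri--Wilson generalized Fisher inequality, a classical consequence of the Johnson framework for constant-weight codes that matches the paper's context. Dividing both sides by $\binom{k}{t}$ and using $b=\binom{n}{t}/\binom{k}{t}$, the claim is equivalent to
\begin{equation*}
b \;\ge\; \Bigl(\tfrac{n}{k}\Bigr)^{\delta}\binom{n-\delta}{h},
\end{equation*}
so the task reduces to proving this single lower bound on $b$.

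When $\delta=0$ (so $t=2h$), the design is already of even strength, and the Ray--Chaudhuri--Wilson bound with $s=h$ gives $b\ge\binom{n}{h}$ directly, which is precisely the claim. When $\delta=1$ (so $t=2h+1$), I would pass to a derived design: pick any point $p\in\cP$ and let $\bD_p$ consist of the blocks through $p$ with $p$ deleted. By Lemma \ref{lem-designbasicp} this is a $2h$-$(n-1,k-1,1)$ design whose block count equals $\lambda_1=bk/n$. Applying the $\delta=0$ case to $\bD_p$ gives $\lambda_1\ge\binom{n-1}{h}$, so $b=(n/k)\lambda_1\ge(n/k)\binom{n-1}{h}$, which is the desired inequality. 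Multiplying back by $\binom{k}{t}$ in each case recovers the stated form.

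The main obstacle is the Ray--Chaudhuri--Wilson inequality itself. If it is invoked as a known theorem the argument collapses to a few lines; for a self-contained proof the work would be either the classical rank argument on the $\binom{n}{h}\times b$ inclusion matrix of $h$-subsets versus blocks (linear independence over $\mathbb{Q}$ being forced by the $\lambda_{|S\cap B|}$ counts from Lemma \ref{lem-hdbooks}), or a Johnson-type iteration $A(n,2\delta',k)\le\lfloor(n/k)A(n-1,2\delta',k-1)\rfloor$ on the block-indicator code of length $n$, constant weight $k$, and minimum distance $2(k-t+1)$. Everything else, in particular the derived-design reduction and the identity $\lambda_1=bk/n$, is elementary.
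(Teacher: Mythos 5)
The paper does not actually prove this theorem: it is quoted from \cite[p.~102]{HBComb} with only the remark that it is derived from the Johnson bounds for constant-weight codes, so your proposal supplies a proof rather than paralleling one in the text. Your route --- rewrite the claim as $b \ge (n/k)^{\delta}\binom{n-\delta}{h}$ via $b=\binom{n}{t}/\binom{k}{t}$ from Lemma~\ref{lem-designbasicp}, apply the Ray--Chaudhuri--Wilson inequality directly when $\delta=0$, and pass to the derived design at a point (a $2h$-$(n-1,k-1,1)$ design with $\lambda_1=bk/n$ blocks) when $\delta=1$ --- is the standard derivation of exactly this form of the generalized Fisher inequality, and the elementary steps (the reduction, the derived-design parameters, the identity $\lambda_1=bk/n$) are all correct. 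Two caveats. First, Ray--Chaudhuri--Wilson carries the side condition $n\ge k+h$ (and $n-1\ge(k-1)+h$ for the derived design), which you do not verify; it is in fact automatic here, since by Theorem~\ref{thm-trivialdesigns} a design with $n\le k+t$ has all $k$-subsets as blocks and hence $\lambda=\binom{n-t}{k-t}>1$ when $n>k>t$, so an existing $t$-$(n,k,1)$ design in the paper's standing range forces $n>k+t\ge k+h$. Second, of your two suggested self-contained proofs of the key inequality, only the rank argument on the $\binom{n}{h}\times b$ inclusion matrix of $h$-subsets versus blocks is viable; a Johnson-type iteration bounds constant-weight codes from \emph{above} and cannot deliver the required lower bound on $b$. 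With Ray--Chaudhuri--Wilson invoked as the classical black box you explicitly flag, the argument is complete and sound.
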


The following result is a fundamental result whose proof can be found in \cite[p. 103]{BJL}. 

\begin{theorem}\label{thm-trivialdesigns}
Every $t$-$(n, k, \lambda)$ design with $n \leq k +t$ is trivial in the sense that all $k$-subsets 
occur as blocks. 
\end{theorem}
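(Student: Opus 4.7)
My plan is to prove the statement by applying the block intersection number machinery (Theorem \ref{thmBJL101} and Lemma \ref{lem-hdbooks}) to the complement of an arbitrary $k$-subset. The key observation is that under the hypothesis $n \leq k+t$, the complement of any $k$-subset of $\cP$ is small enough ($\leq t$ points) that the uniform intersection formula kicks in.

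Concretely, I would fix an arbitrary $k$-subset $K \subseteq \cP$ and set $X = \cP \setminus K$, so that $|X| = n - k \leq t$. I would then consider $\lambda_{(0,\,n-k)}$ with respect to this $X$ and the empty subset $Y = \emptyset$: by definition this counts the number of blocks $B \in \cB$ satisfying $B \cap X = \emptyset$, equivalently $B \subseteq K$. Since $|B| = k = |K|$, this forces $B = K$, so $\lambda_{(0,\,n-k)}$ equals the number of times $K$ occurs as a block. Because $0 + (n-k) \leq t$, Theorem \ref{thmBJL101} guarantees that this count is the same for every choice of $X$ (and hence every choice of $K$), and Lemma \ref{lem-hdbooks} gives the explicit value
\[
\lambda_{(0,\,n-k)} = \frac{\lambda \binom{n-(n-k)}{k-0}}{\binom{n-t}{k-t}} = \frac{\lambda \binom{k}{k}}{\binom{n-t}{k-t}} = \frac{\lambda}{\binom{n-t}{k-t}}.
\]

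To finish, I would invoke the standing assumption of the paper that all $t$-designs under consideration are simple. Simplicity forces the number of occurrences of any fixed $k$-subset as a block to be either $0$ or $1$. Since the count $\lambda_{(0,\,n-k)}$ is independent of the chosen $K$, it is either $0$ for all $k$-subsets (impossible, as $\cB$ is nonempty) or $1$ for all $k$-subsets. In the latter case every $k$-subset of $\cP$ occurs as a block, which is the assertion of triviality; as a by-product one also recovers $\lambda = \binom{n-t}{k-t}$.

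There is essentially no obstacle here: the only subtlety is making sure the boundary case $i+j = t$ is legitimately covered by Theorem \ref{thmBJL101} and Lemma \ref{lem-hdbooks} (it is, since the hypothesis is $i+j \leq t$), and invoking simplicity to force the count to be $0$ or $1$ rather than an arbitrary non-negative integer. Without the simplicity assumption the argument would only yield that every $k$-subset occurs with the same multiplicity $\lambda/\binom{n-t}{k-t}$, which is a slightly weaker notion of triviality.
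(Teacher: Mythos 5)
Your proof is correct. Note that the paper does not actually prove Theorem \ref{thm-trivialdesigns}; it only cites \cite[p.~103]{BJL}, and your argument via the intersection number $\lambda_{(0,\,n-k)}$ of the complement of an arbitrary $k$-subset is the standard one, built exactly from the tools the paper has already assembled (Theorem \ref{thmBJL101} and Lemma \ref{lem-hdbooks}). The hypothesis $n\le k+t$ enters precisely where it must, to guarantee $0+(n-k)\le t$ so that $\lambda_{(0,\,n-k)}=\lambda/\binom{n-t}{k-t}>0$ uniformly over all $k$-subsets, and invoking the paper's standing simplicity assumption to conclude that this constant positive multiplicity equals $1$ is the right finishing step.
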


\begin{theorem}\label{thm-mytrivialdesign}
The only $t$-$(n, k, \lambda)$ design with $t \geq n/2$ is the trivial $t$-$(n, n, 1)$ design $(\cP, \cB)$ 
with $\cP=[1..n]$ and $\cB=\{[1..n]\}$. 
\end{theorem}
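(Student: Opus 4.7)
The plan is to derive the statement as an essentially immediate corollary of Theorem~\ref{thm-trivialdesigns}; the only substantive work is to extract the inequality $n \leq k+t$ from the hypothesis $t \geq n/2$.

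First I would note that any $t$-$(n,k,\lambda)$ design satisfies $k \geq t$ by definition of a $t$-design. Combining this with the hypothesis $t \geq n/2$ gives
\[
k+t \;\geq\; 2t \;\geq\; n,
\]
so the hypothesis $n \leq k+t$ of Theorem~\ref{thm-trivialdesigns} is automatically satisfied. Invoking that theorem forces every $k$-subset of $\cP$ to appear as a block, i.e.\ $\cB = \binom{\cP}{k}$, and the parameter count in Lemma~\ref{lem-designbasicp} then pins down $\lambda = \binom{n-t}{k-t}$.

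Next I would specialise to $k=n$: here $\binom{\cP}{n} = \{\cP\} = \{[1..n]\}$ consists of a single block, $\lambda = \binom{n-t}{n-t} = 1$, and one recovers exactly the trivial $t$-$(n,n,1)$ design of the statement.

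The main point of care, which I view as the only genuine obstacle, is linguistic rather than mathematical: Theorem~\ref{thm-trivialdesigns} uses ``trivial'' to mean ``every $k$-subset is a block,'' whereas the Introduction reserves ``trivial'' for $k \in \{t,n\}$. In the writeup I would be explicit about this distinction and invoke the paper's standing convention $n > k > t$ in order to rule out the intermediate complete designs and collapse the conclusion of Theorem~\ref{thm-trivialdesigns} unambiguously to the case $k = n$ asserted in the claim.
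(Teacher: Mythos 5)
Your route is genuinely different from the paper's. The paper argues via Theorem~\ref{thm-maincharacterisationthm} and Lemma~\ref{lem-complementaryWalsh}: when $t \geq n/2$ every weight $h$ satisfies $h \leq t$ or $h \geq n-t$, so the entire Walsh spectrum of $f_{\bD}$, and hence $f_{\bD}$ itself, is forced by the parameters. You instead reduce to Theorem~\ref{thm-trivialdesigns} via $k+t \geq 2t \geq n$. That reduction, together with the conclusions $\cB = \binom{\cP}{k}$ and $\lambda = \binom{n-t}{k-t}$, is correct and clean, and it is a legitimately more elementary path than the spectral one.

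However, your final step fails. The convention $n > k > t$ cannot ``rule out the intermediate complete designs and collapse the conclusion to $k=n$''; it does exactly the opposite. The complete designs $\binom{\cP}{k}$ with $t < k < n$ are precisely the designs that convention admits, whereas the claimed conclusion $k=n$ is the case it excludes. Concretely, the set of all $4$-subsets of $[1..6]$ is a simple $3$-$(6,4,3)$ design with $t = 3 \geq n/2$ and $k \neq n$, so no argument along your lines (or indeed any argument) can reach the stated conclusion without an extra hypothesis excluding complete designs with $k<n$. In fairness, the paper's own proof glosses over the same point: uniqueness of the spectrum for each fixed $(n,k,\lambda,t)$ shows there is at most one design with those parameters, but does not force $k=n$. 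So you have correctly located the only real difficulty, but the fix you propose does not close it.
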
 

\begin{proof}
Suppose $\bD=(\cP, \cB)$ is a $t$-$(n, k, \lambda)$ design with $t \geq \lfloor n/2 \rfloor$. Then by Theorem 
\ref{thm-maincharacterisationthm} and Lemma \ref{lem-complementaryWalsh}, the Walsh spectra of 
$f_{\bD}$ is unique. Hence, the Boolean function $f_{\bD}$ is uniquely determined. This means that it must 
be the trivial $t$-$(n, n, 1)$ design $(\cP, \cB)$ 
with $\cP=[1..n]$ and $\cB=\{[1..n]\}$.   
\end{proof}

We remark that the conclusion of Theorem \ref{thm-mytrivialdesign} is stronger than that of Theorem 
\ref{thm-trivialdesigns} in this special case. 

Note that for $t$-$(n, k, \lambda)$ designs, we have $1 \leq t \leq k \leq n$. In view of Theorems 
\ref{thm-trivialdesigns} and \ref{thm-mytrivialdesign}, the most interesting designs are 
$(n-2)/2$-$(n, n/2, \lambda)$ designs for even $n$ and $(n-3)/2$-$(n, k, \lambda)$ 
designs for odd $n$ and $k \in \{(n-1)/2, (n+1)/2\}$.

\subsection{The spectra of the characteristic function $f_{\bD}(x)$ of $(n-2)/2$-$(n, n/2, 1)$  
Steiner systems}\label{sec-may27} 

Theorem \ref{thm-maincharacterisationthm} and Lemma \ref{lem-complementaryWalsh} show that $\hat{f}_{\bD}(w)$ 
is known for all $w \in \gf(2)^n$ except those with $\wt(w)=n/2$ when $\bD$ is an $(n-2)/2$-$(n, n/2, 1)$  
Steiner system. In this section, we determine $\hat{f}_{\bD}(w)$ for all $w \in \gf(2)^n$ with 
$\wt(w)=n/2$. By Theorem \ref{thm-fourthfactor}, $n=2p-2$ for a prime $p \geq 3$.   

\begin{theorem}
Let $\bD=(\cP, \cB)$ be an incidence structure, where the point set $\cP=[1..n]$,  
the block set $\cB=\{B_1, B_2, \ldots, B_b\}$, the block size $|B_i|$ is $k$, and $k$ and $b$ 
are positive integers. If $\bD$ is an $(n-2)/2$-$(n, n/2, 1)$ design, where $n$ is even, then 
\begin{eqnarray}\label{eqn-1str}
\sum_{\myatop{w \in \gf(2)^n}{\wt(w)=n/2}} \hat{f}_{\bD}(w)  = 
- \frac{4}{n+2} \sum_{h=0}^{(n-2)/2} \binom{n}{h} P_{n/2}(h), 
\end{eqnarray}
and
\begin{eqnarray}\label{eqn-2str}
\sum_{\myatop{w \in \gf(2)^n}{\wt(w)=n/2}} \left(\hat{f}_{\bD}(w)\right)^2 = 
2^{n} b -  \frac{8}{(n+2)^2}   \sum_{h=0}^{(n-2)/2} \binom{n}{h} \left(P_{n/2}(h)\right)^2.
\end{eqnarray} 
\end{theorem}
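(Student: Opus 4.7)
The plan is to exploit the fact that, for an $(n-2)/2$-$(n,n/2,1)$ Steiner system, the Walsh transform $\hat{f}_{\bD}(w)$ is already determined for \emph{every} weight $h \neq n/2$, and then to back out the sum and sum of squares over the ``unknown'' middle layer $\wt(w)=n/2$ via Lemma~\ref{lem-WalshProperties}.

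First I would use Theorem~\ref{thm-maincharacterisationthm} with $t=(n-2)/2$, $k=n/2$, $\lambda=1$, giving $\binom{n-t}{k-t}=\binom{(n+2)/2}{1}=(n+2)/2$, to conclude
\[
\hat{f}_{\bD}(w)=\frac{2P_{n/2}(h)}{n+2}\qquad\text{for all }w\in\gf(2)^n\text{ with }\wt(w)=h\le (n-2)/2.
\]
Next, by Theorem~\ref{thm-fourthfactor}, $n\equiv 0\pmod 4$, so $n/2$ is even and $(-1)^{n/2}=1$. Lemma~\ref{lem-complementaryWalsh} then gives $\hat{f}_{\bD}(\bar w)=\hat{f}_{\bD}(w)$, and Theorem~\ref{thm-july161} gives $P_{n/2}(h)=P_{n/2}(n-h)$. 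Combining these, the displayed formula above in fact holds for every $h\in\{0,1,\ldots,n\}\setminus\{n/2\}$. Thus only the layer $\wt(w)=n/2$ is unknown.

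Now I would apply Lemma~\ref{lem-WalshProperties}(1). Since every block $B_i$ has size $n/2\ge 1$, $\bzero\notin\support(f_{\bD})$, so $f_{\bD}(\bzero)=0$ and $\sum_{w\in\gf(2)^n}\hat{f}_{\bD}(w)=0$. Splitting off the middle layer,
\[
\sum_{\wt(w)=n/2}\hat{f}_{\bD}(w)=-\sum_{h\neq n/2}\binom{n}{h}\cdot\frac{2P_{n/2}(h)}{n+2}.
\]
Pairing $h$ with $n-h$ using $\binom{n}{h}=\binom{n}{n-h}$ and $P_{n/2}(h)=P_{n/2}(n-h)$ folds the sum over $h\in\{0,\ldots,n\}\setminus\{n/2\}$ into twice the sum over $h\in\{0,\ldots,(n-2)/2\}$, yielding (\ref{eqn-1str}).

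For (\ref{eqn-2str}), I would repeat the same argument with Lemma~\ref{lem-WalshProperties}(2): $\sum_w\hat{f}_{\bD}(w)^2=2^n\wt(f_{\bD})=2^n b$. Subtracting the contribution of the known layers gives
\[
\sum_{\wt(w)=n/2}\hat{f}_{\bD}(w)^2=2^n b-\sum_{h\neq n/2}\binom{n}{h}\!\left(\frac{2P_{n/2}(h)}{n+2}\right)^{\!2},
\]
and the same $h\leftrightarrow n-h$ pairing collapses the right-hand sum to the form in (\ref{eqn-2str}). There is no serious obstacle here; the whole argument is an orchestration of already-proven facts, and the only subtle point is checking that $n\equiv 0\pmod 4$ is needed (via Theorem~\ref{thm-fourthfactor}) so that the parity factor $(-1)^{n/2}$ in Lemma~\ref{lem-complementaryWalsh} and in Theorem~\ref{thm-july161} becomes $+1$, which is precisely what makes the outer-layer and symmetric-pair identifications clean.
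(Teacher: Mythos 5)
Your proposal is correct and follows essentially the same route as the paper: determine the layers $\wt(w)\neq n/2$ from Theorem \ref{thm-maincharacterisationthm}, fold the upper layers onto the lower ones via the complementation symmetry of Lemma \ref{lem-complementaryWalsh} (using that $k=n/2$ is even), and extract the middle layer from the two moment identities of Lemma \ref{lem-WalshProperties}. The only cosmetic difference is that you make the upper-layer values explicit through the reflection identity $P_{n/2}(h)=P_{n/2}(n-h)$ and then pair $h$ with $n-h$, whereas the paper pairs $w$ with $\bar w$ directly and never needs that identity.
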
 

\begin{proof} 
By Theorem \ref{thm-fourthfactor}, the block size $k=n/2$ is even. 
We first prove (\ref{eqn-1str}). Note that $f_{\bD}(\bzero)=0$, as $k=n/2 \geq 2$. It follows from Lemmas \ref{lem-WalshProperties}, 
\ref{lem-complementaryWalsh},  and 
Theorem \ref{thm-maincharacterisationthm} that 
\begin{eqnarray*}
\sum_{\myatop{w \in \gf(2)^n}{\wt(w)=n/2}} \hat{f}_{\bD}(w)  
&=& 2^n f(\bzero) - 
\left((-1)^{k}+1\right) \sum_{\myatop{w \in \gf(2)^n}{0 \leq \wt(w) \leq \frac{n-2}{2}}} \hat{f}_{\bD}(w) \\
&=& -2 \left( \sum_{\myatop{w \in \gf(2)^n}{0 \leq \wt(w) \leq \frac{n-2}{2}}} \hat{f}_{\bD}(w) \right) \\ 
&=& - \frac{4}{n+2} \sum_{h=0}^{(n-2)/2} \binom{n}{h} P_{n/2}(h).  
\end{eqnarray*} 

We now prove (\ref{eqn-2str}). It follows from Lemmas \ref{lem-WalshProperties}, 
\ref{lem-complementaryWalsh},  and 
Theorem \ref{thm-maincharacterisationthm} that 
\begin{eqnarray*}
\sum_{\myatop{w \in \gf(2)^n}{\wt(w)=n/2}} \left(\hat{f}_{\bD}(w)\right)^2   
&=& 2^{n} b - 
 2\sum_{\myatop{w \in \gf(2)^n}{0 \leq \wt(w) \leq \frac{n-2}{2}}} \left(\hat{f}_{\bD}(w)\right)^2  \\
&=& 2^{n} b -  \frac{8}{(n+2)^2}   \sum_{h=0}^{(n-2)/2} \binom{n}{h} \left(P_{n/2}(h)\right)^2. 
\end{eqnarray*} 
\end{proof}

\begin{theorem}
Let $\bD=(\cP, \cB)$ be an $(n-2)/2$-$(n, n/2, 1)$ design, where $n \equiv 0 \pmod{4}$. If $w \in \gf(2)^n$ has odd weight $h$ with 
$1 \leq h \leq t$, then $\hat{f}_{\bD}(w)=0$.   

\end{theorem}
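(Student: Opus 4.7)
The plan is to observe that this statement is essentially a direct specialization of Theorem~\ref{thm-zerospectrum}, combined with Theorem~\ref{thm-maincharacterisationthm}. Since $n \equiv 0 \pmod{4}$ in particular implies $n$ is even, and since an $(n-2)/2$-$(n, n/2, 1)$ design is a $t$-$(n, n/2, \lambda)$ design with $t = (n-2)/2$ and $\lambda = 1$, Theorem~\ref{thm-zerospectrum} applies verbatim and yields the conclusion. So the entire proof amounts to invoking that result.

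If I wanted to write a self-contained argument rather than citing Theorem~\ref{thm-zerospectrum}, I would proceed in three short steps. First, apply Theorem~\ref{thm-maincharacterisationthm} with $k = n/2$, $t = (n-2)/2$, $\lambda = 1$, to get, for any $w$ of weight $h$ with $0 \leq h \leq t$,
$$\hat{f}_{\bD}(w) = \frac{P_{n/2}(h)}{\binom{n-t}{k-t}}.$$
Second, invoke the second bullet of Theorem~\ref{thm-ms21}, namely $P_k(i) = (-1)^i P_{n-k}(i)$, with $k = n/2$. Since $n - k = n/2$ as well, this reads $P_{n/2}(h) = (-1)^h P_{n/2}(h)$. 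Third, for odd $h$ this forces $2 P_{n/2}(h) = 0$, hence $P_{n/2}(h) = 0$, and therefore $\hat{f}_{\bD}(w) = 0$.

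There is no genuine obstacle: the Krawtchouk symmetry $P_k(i) = (-1)^i P_{n-k}(i)$ at the self-dual point $k = n/2$ kills all odd-weight spectral values in the range $[1,t]$, and the spectral characterisation bridges this vanishing directly to the Walsh transform of $f_{\bD}$. The only thing worth remarking on is that the hypothesis $n \equiv 0 \pmod{4}$ (coming from Theorem~\ref{thm-fourthfactor}) is not actually used in the vanishing argument itself; it is only needed to ensure the existence of such a design, so the statement is nontrivial.
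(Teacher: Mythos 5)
Your proposal is correct and is essentially identical to the paper's argument: the paper proves this theorem by citing Theorem \ref{thm-zerospectrum}, whose own proof uses exactly the Krawtchouk symmetry $P_k(i)=(-1)^iP_{n-k}(i)$ at $k=n/2$ together with Theorem \ref{thm-maincharacterisationthm}, just as in your unwound version. Your closing remark that $n\equiv 0\pmod 4$ is needed only for admissibility, not for the vanishing itself, is also accurate.
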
 

\begin{proof}
The desired conclusion follows from Theorem \ref{thm-zerospectrum}. 
\end{proof}

\begin{theorem}\label{thm-SteinerSpect1}
Let $w \in \gf(2)^n$ with $\support(w)=B_i$ being a block of an $(n-2)/2$-$(n, n/2, 1)$ design 
$\bD=(\cP, \cB)$. Then 
\begin{eqnarray}\label{eqn-yyyi}
\hat{f}_{\bD}(w) = 2^{n/2} - \frac{2\sum_{h=1}^{n/2} \binom{n/2}{h} \sum_{\ell=0}^{h-1} (-1)^\ell \binom{(n+2)/2}{\ell +1}}{n+2}.  
\end{eqnarray}
\end{theorem}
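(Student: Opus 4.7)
\bigskip
\noindent\emph{Proof plan for Theorem~\ref{thm-SteinerSpect1}.}

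The plan is to evaluate $\hat f_{\bD}(w)$ directly from its definition, by partitioning the blocks according to the size of their intersection with the fixed block $B_i=\support(w)$. Writing $X:=B_i$ and using $w\cdot\varphi^{-1}(B_j)\equiv |X\cap B_j|\pmod{2}$, we get
\begin{equation*}
\hat f_{\bD}(w)=\sum_{j=1}^{b}(-1)^{|X\cap B_j|}=\sum_{s=0}^{n/2}(-1)^{s}N_{s},
\qquad
N_{s}:=\bigl|\{1\le j\le b:\ |X\cap B_j|=s\}\bigr|.
\end{equation*}
Since $\lambda=1$ and $X$ is itself a block, the only block meeting $X$ in all $n/2$ points is $X$, so $N_{n/2}=1$. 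For $0\le s\le t=(n-2)/2$, each $s$-subset $Y\subseteq X$ contributes the same number $\mu_{s}$ of blocks $B_j$ with $B_j\cap X=Y$, thanks to Theorem~\ref{thmBJL101} (second case: $\lambda=1$ and $X$ a block). Thus $N_{s}=\binom{n/2}{s}\mu_{s}$.

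Next I would apply Theorem~\ref{thm-mymay22} with the identification $j'=n/2-s$ (so that the first index $s=t-(j'-1)$ is the size of~$Y$), and with $n-t=(n+2)/2$. This yields, for $1\le h\le n/2$ (letting $h:=n/2-s$),
\begin{equation*}
\mu_{n/2-h}=\frac{2(-1)^{h-1}}{n+2}\,S_{h}+(-1)^{h},
\qquad
S_{h}:=\sum_{\ell=0}^{h-1}(-1)^{\ell}\binom{(n+2)/2}{\ell+1}.
\end{equation*}

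Then I would reindex the Walsh sum by $h=n/2-s$. Theorem~\ref{thm-fourthfactor} guarantees $n\equiv0\pmod 4$, so $(-1)^{n/2}=1$ and $\binom{n/2}{n/2-h}=\binom{n/2}{h}$. Separating the $h=0$ term (which equals $1$ and comes from $N_{n/2}=1$), the sum becomes
\begin{equation*}
\hat f_{\bD}(w)=1+\sum_{h=1}^{n/2}(-1)^{h}\binom{n/2}{h}\mu_{n/2-h}
=\sum_{h=0}^{n/2}\binom{n/2}{h}-\frac{2}{n+2}\sum_{h=1}^{n/2}\binom{n/2}{h}S_{h},
\end{equation*}
where I used $(-1)^{h}\cdot(-1)^{h}=1$ to collapse the diagonal contribution into the full binomial sum. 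The binomial theorem finishes it off via $\sum_{h=0}^{n/2}\binom{n/2}{h}=2^{n/2}$, which is exactly~(\ref{eqn-yyyi}).

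The routine book-keeping is the change of variable $s\mapsto h=n/2-s$ and separating the singular term $s=n/2$ (where Theorem~\ref{thm-mymay22} does not apply and one must use $\lambda=1$ by hand). The one place requiring care is verifying that Theorem~\ref{thm-mymay22} can legitimately be invoked here: its hypothesis requires $X$ to be a block of a $t$-$(n,t+1,1)$ design, which holds since $k=n/2=t+1$ and $\lambda=1$. I expect that to be the main conceptual point; after that the derivation is purely algebraic.
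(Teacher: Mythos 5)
Your proposal is correct and follows essentially the same route as the paper's own proof: decompose $\hat f_{\bD}(w)$ by the intersection sizes $|X\cap B_j|$, isolate the term $B_j=X$, invoke the block intersection numbers $\lambda_{(n/2-h,\,h)}(X,Y)$ from Theorem~\ref{thm-mymay22} via the reindexing $h=n/2-s$, and use $n\equiv 0\pmod 4$ together with $\sum_{h}\binom{n/2}{h}=2^{n/2}$ to finish. Your added care in justifying why Theorem~\ref{thmBJL101} and Theorem~\ref{thm-mymay22} apply (namely $\lambda=1$, $X$ a block, and $k=t+1$) is exactly the implicit content of the paper's argument.
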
 

\begin{proof} 
Note that $n \equiv 0 \pmod{4}$. 
It follows from 
Theorem \ref{thm-mymay22} that 
$$ 
\lambda_{(\frac{n}{2}-j, j)}=(-1)^j + \frac{2 (-1)^{j-1} \sum_{\ell=0}^{j-1} (-1)^\ell \binom{(n+2)/2}{\ell +1}}{n+2}   
$$ 
for $1 \leq j \leq n/2$. 
We have then 
\begin{eqnarray*}
\hat{f}_{\bD}(w) 
&=& \sum_{j=1}^b (-1)^{|B_j \cap B_i|} \\
&=& 1+ \sum_{h=0}^{(n-2)/2} (-1)^h \binom{n/2}{h} \lambda_{(h, \frac{n}{2}-h)} \\
&=& 1+ \sum_{h=1}^{n/2} (-1)^h \binom{n/2}{h} \lambda_{(\frac{n}{2}-h, h)} \\
&=& 1+ \sum_{h=1}^{n/2} (-1)^h \binom{n/2}{h} \left[ (-1)^h + \frac{2(-1)^{h-1} \sum_{\ell=0}^{h-1} (-1)^\ell \binom{(n+2)/2}{\ell +1} }{n+2} \right] \\ 
&=& \sum_{h=0}^{n/2} \binom{n/2}{h} - \frac{2\sum_{h=1}^{n/2} \binom{n/2}{h} \sum_{\ell=0}^{h-1} (-1)^\ell \binom{(n+2)/2}{\ell +1}}{n+2}.
\end{eqnarray*} 
The proof is then completed. 
\end{proof}

The following theorem will complete the task of determining the spectra of the characteristic function 
$f_{\bD}$ for $(n-2)/2$-$(n, n/2, 1)$ Steiner systems. 

\begin{theorem}\label{thm-Steinerspect2}
Let $\bD=([1..n], \cB)$ be an $(n-2)/2$-$(n, n/2, 1)$ Steiner system. Let $w \in \gf(2)^n$ with 
$\wt(w)=n/2$. Let $B=\support(w)$. Then 
$$ 
\hat{f}_{\bD}(w)=\sum_{i=0}^{n/2} (-1)^i y_i, 
$$ 
where $y_0, y_1, \ldots, y_{n/2}$ are uniquely determined by the following system of equations: 
\begin{eqnarray}\label{eqn-final28}
\left\{ 
\begin{array}{l}
\sum_{i=r}^{n/2} \binom{i}{r} y_i = \binom{n/2}{r} \lambda_r, \ \ \ 0 \leq r \leq \frac{n-2}{2}, \\ 
y_0=y_{\frac{n}{2}} = 
    \left\{ 
    \begin{array}{ll}
    1 & \mbox{ if } B \in \cB, \\
    0 & \mbox{ if } B \not\in \cB. 
    \end{array}
\right.   
\end{array}
\right. 
\end{eqnarray}
\end{theorem}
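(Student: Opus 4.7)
The plan is to define the intersection counts $y_i := |\{j : |B_j \cap B| = i\}|$ for $0 \le i \le n/2$ and verify directly that they satisfy each equation in the displayed system; uniqueness will then follow from the system's triangular structure. The alternating-sum formula is immediate: because every block has size $n/2 = |B|$, we have $w \cdot \varphi^{-1}(B_j) \equiv |B_j \cap B| \pmod{2}$, so
\[
\hat{f}_\bD(w) = \sum_{j=1}^b (-1)^{|B_j \cap B|} = \sum_{i=0}^{n/2} (-1)^i y_i .
\]
The $n/2$ counting equations come from double-counting, for each fixed $r$ with $0 \le r \le (n-2)/2 = t$, the pairs $(S, B_j)$ in which $S$ is an $r$-subset of $B$ contained in the block $B_j$. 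Summing over blocks yields $\sum_j \binom{|B_j \cap B|}{r} = \sum_{i=r}^{n/2} \binom{i}{r} y_i$, while summing over $r$-subsets of $B$ and applying Lemma \ref{lem-Stinson} (available because $r \le t$) yields $\binom{n/2}{r} \lambda_r$.

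For the boundary conditions, $y_{n/2}$ equals $1$ if $B \in \cB$ and $0$ otherwise (since $|B_j \cap B| = n/2$ forces $B_j = B$), and analogously $y_0 = 1$ exactly when $\bar{B} \in \cB$. The main obstacle is proving that $\cB$ is closed under complementation, so that $y_0 = y_{n/2}$ in either case. To do this I would specialise Theorem \ref{thm-mymay22} to $j = t+1 = n/2$, so that for any $X \in \cB$ the quantity $\lambda_{(0, n/2)}(X, \emptyset)$ counts the blocks disjoint from $X$. Setting $m := n - t = (n+2)/2$, which is an odd prime by Theorem \ref{thm-fourthfactor}, the inner alternating sum in that formula simplifies via $\sum_{k=0}^m (-1)^k \binom{m}{k} = 0$ to $\sum_{k=1}^{m-1}(-1)^{k-1}\binom{m}{k} = 1 + (-1)^m = 0$, so $\lambda_{(0, n/2)}(X, \emptyset) = 1$. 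The unique block disjoint from $X$ must therefore be $\bar{X}$, giving $X \in \cB \Rightarrow \bar{X} \in \cB$; contraposition dispatches the case $B \notin \cB$ and forces $y_0 = y_{n/2}$.

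Uniqueness of the $y_i$ is then routine: with $y_{n/2}$ pinned down, the $r$-th counting equation rewrites as $y_r = \binom{n/2}{r}\lambda_r - \sum_{i=r+1}^{n/2}\binom{i}{r} y_i$, so back-substitution from $r = (n-2)/2$ down to $r = 0$ recovers each $y_i$ unambiguously. This confirms that $\hat{f}_\bD(w) = \sum_{i=0}^{n/2}(-1)^i y_i$ for the unique solution of the stated system. I expect the self-complementarity step to be the only non-bookkeeping part, since it is where the specific parameter structure (namely $n \equiv 0 \pmod 4$ with $(n+2)/2$ prime) enters essentially.
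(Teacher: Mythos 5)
Your proposal is correct, and it diverges from the paper's proof at exactly the step you identified as the crux. The paper also defines $y_i$ as the intersection counts and obtains the counting equations $\sum_{i=r}^{n/2}\binom{i}{r}y_i=\binom{n/2}{r}\lambda_r$ by citing Alltop; you reprove them by the standard double count, which is fine. Where the routes split is the boundary condition: the paper invokes a second identity from Alltop, namely $y_0-y_{n/2}=\sum_{r=0}^{(n-2)/2}(-1)^r\binom{n/2}{r}\lambda_r$, and then asserts (with the phrase ``one can prove that'') that this alternating sum vanishes, giving $y_0=y_{n/2}$ without exhibiting the computation. You instead specialise Theorem \ref{thm-mymay22} to $j=t+1$ to get $\lambda_{(0,n/2)}(X,\emptyset)=(-1)^{t+1}=1$ (your simplification of the inner sum using $m=(n+2)/2$ odd is correct, and only the oddness of $m$ --- i.e.\ $n\equiv 0\pmod 4$ --- is needed, not its primality), conclude that the unique block disjoint from $X$ is $\overline{X}$, and hence that $\cB$ is closed under complementation. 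This buys you two things: your argument is fully self-contained where the paper leaves a sum unverified, and it establishes the self-complementarity of Corollary \ref{cor-Steinerselfcompl} as an intermediate step rather than deducing it afterwards from $y_0=y_{n/2}$. Your uniqueness argument by back-substitution from $r=(n-2)/2$ downward, anchored at $y_{n/2}$, is also correct and is more explicit than the paper's implicit appeal to the triangular structure.
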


\begin{proof}
Define 
$$ 
y_i=|\{1 \leq j \leq b: |B \cap B_j|=i\}|
$$ 
for $0 \leq i \leq n/2$. It then follows from \cite[p. 179]{Alltop} that 
\begin{eqnarray*}
\sum_{i=r}^{n/2} \binom{i}{r} y_i = \binom{n/2}{r} \lambda_r, \ \ \ 0 \leq r \leq \frac{n-2}{2}   
\end{eqnarray*} 
and 
\begin{eqnarray*}
y_0 - y_{n/2}=\sum_{r=0}^{\frac{n-2}{2}} (-1)^r \binom{n/2}{r} \lambda_r. 
\end{eqnarray*}
One can prove that 
$$ 
\sum_{r=0}^{\frac{n-2}{2}} (-1)^r \binom{n/2}{r} \lambda_r=0. 
$$ 
The desired conclusion then follows from 
$$
\hat{f}_{\bD}(w)=\sum_{j=1}^b (-1)^{|B \cap B_j|} =\sum_{i=0}^{n/2} (-1)^i y_i. 
$$
\end{proof}

We remark that the values $y_0, y_1, \ldots, y_{n/2}$ in Theorem \ref{thm-Steinerspect2} can be 
derived easily from (\ref{eqn-yyyi}), though their expressions may look a little complex. As a consequence of Theorem \ref{thm-Steinerspect2}, we have the following, 

\begin{corollary}\label{cor-Steinerselfcompl} 
Every $(n-2)/2$-$(n, n/2, 1)$ design $\bD$ is self-complementary, i.e., the complement of a block is also 
a block of the design, i.e., $\overline{\bD}=\bD$. 
\end{corollary}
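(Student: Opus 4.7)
The plan is to derive the self-complementarity directly from the identity $y_0=y_{n/2}$ established inside the proof of Theorem~\ref{thm-Steinerspect2}. Fix any block $B \in \cB$ and let $w \in \gf(2)^n$ be its characteristic vector, so $\wt(w)=n/2$. Recall that the theorem defines
$$y_i = |\{1 \leq j \leq b : |B \cap B_j| = i\}|$$
for $0 \leq i \leq n/2$, and asserts $y_0=y_{n/2}$, with common value $1$ when $B\in\cB$ and $0$ otherwise.

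First I would interpret $y_{n/2}$ and $y_0$ combinatorially, using the fact that every block has size $n/2$. The condition $|B\cap B_j|=n/2$ forces $B_j \subseteq B$ and hence $B_j=B$ by size, so since $\bD$ is simple and $B\in\cB$ we have $y_{n/2}=1$. Symmetrically, the condition $|B\cap B_j|=0$ means $B_j \subseteq [1..n]\setminus B$; but $[1..n]\setminus B$ has cardinality $n/2$, which equals $|B_j|$, forcing $B_j=\overline{B}$. So $y_0$ literally counts the number of times $\overline{B}$ appears as a block.

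Next, from the equality $y_0=y_{n/2}=1$ I conclude that $\overline{B}$ occurs exactly once in $\cB$; in particular $\overline{B}\in\cB$. Since this holds for every $B\in\cB$, the complementation map $B\mapsto\overline{B}$ sends $\cB$ into $\overline{\cB}\cap \cB$, giving $\overline{\cB}\subseteq\cB$. Because this map is an involution on the set of $n/2$-subsets of $[1..n]$, we have $|\overline{\cB}|=|\cB|$, and therefore $\overline{\cB}=\cB$, i.e.\ $\overline{\bD}=\bD$.

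There is essentially no hard step here: the corollary is a direct reading of Theorem~\ref{thm-Steinerspect2}. The only point requiring mild care is the size argument that, among $n/2$-subsets of $[1..n]$, being disjoint from a fixed $n/2$-subset $B$ uniquely identifies the subset as $\overline{B}$, which is what converts the numerical equality $y_0=y_{n/2}$ into the set-theoretic statement $\overline{B}\in\cB$.
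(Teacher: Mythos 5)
Your proof is correct and follows the same route as the paper, which derives the corollary directly from the identity $y_0=y_{n/2}$ in the proof of Theorem~\ref{thm-Steinerspect2}; you have simply made explicit the combinatorial reading that $y_{n/2}=1$ identifies $B$ itself and $y_0=1$ identifies $\overline{B}$ as the unique block disjoint from $B$. No gaps.
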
  

\begin{proof}
The desired conclusion follows from the fact that $y_0=y_{n/2}$ in the proof of Theorem \ref{thm-Steinerspect2}. 
\end{proof}

Theorem \ref{thm-Steinerspect2} and Corollary \ref{cor-Steinerselfcompl} tell us that $\hat{f}_{\bD}(w)$ 
takes on two different values depending on whether $\support(w) \in \cB$ or  $\support(w) \in \binom{[1..n]}{n/2} \setminus \cB$ for all $w \in \gf(2)^n$ with $\wt(w)=n/2$.  

\begin{table}[hb]
\center 
\caption{Spectra of $f_{\bD}$}\label{tab-spectraStein}
{
\begin{tabular}{lr}
\hline
Weight of $w$    & Multiset $\{ \hat{f}_{\bD}(w) \}$  \\ \hline
$0, 12$          & $\{ 132 \}$ \\
$1,11$          & $\{ 0^{12} \}$ \\ 
$2,10$          & $\{ -12^{66} \}$ \\  
$3,9$           & $\{ 0^{220} \}$ \\  
$4,8$           & $\{ 4^{495} \}$ \\  
$5,7$           & $\{ 0^{792} \}$ \\  
$6$             & $\{ -12^{792}, 52^{132}\}$ \\  
\hline 
\end{tabular}
}
\end{table}

\begin{example}\label{exam-SteinerSys} 
Consider the $5$-$(12, 6, 1)$ Steiner system from the extended ternary Golay code of length $12$. The spectrum $\hat{f}_{\bD}(w)$ 
is given in Table \ref{tab-spectraStein}. 
\end{example} 

Theorem \ref{thm-Steinerspect2} does not give an explicit expression of $\hat{f}_{\bD}(w)$ when 
$|\support(w)|=n/2$ and $\support(w)$ is not a block of $\bD$. We would now give an explicit 
expression of $\hat{f}_{\bD}(w)$ for this case. 

As before, let $\bD=([1..n], \cB)$ be an $(n-2)/2$-$(n, n/2, 1)$ design. Define  
$\tilde{\cB}=\binom{[1..n]}{n/2} \setminus \cB$.  Theorems \ref{thm-SteinerSpect1} 
and \ref{thm-Steinerspect2} show that 
\begin{eqnarray}\label{eqn-july161}
\hat{f}_{\bD}(w) = 
\left\{ 
\begin{array}{ll}
a & \mbox{ if } w \in \varphi^{-1}(\cB), \\ 
\tilde{a} & \mbox{ if } w \in \varphi^{-1}(\tilde{\cB}), 
\end{array}
\right. 
\end{eqnarray} 
where $a$ is the number of the right-hand side of (\ref{eqn-yyyi}), and $\tilde{a}$ is implied in 
Theorem \ref{thm-Steinerspect2}. We now determine $\tilde{a}$ specifically.  

Define $\Delta=2/(n+2)$. 
By Theorem \ref{thm-july161}, 
\begin{eqnarray}\label{eqn-july162}
P_{n/2}(h) = (-1)^{n/2} P_{n/2}(n-h) = P_{n/2}(n-h),  
\end{eqnarray} 
where $0 \leq h \leq n$. 
By Lemma \ref{lem-complementaryWalsh}
\begin{eqnarray}\label{eqn-july163}
\hat{f}_{\bD}(\bar{w})=(-1)^{n/2} \hat{f}_{\bD}(w)= \hat{f}_{\bD}(w), 
\end{eqnarray}
where $w \in \gf(2)^n$ and $\bar{w}=\bone + w$ which is the complement of $w$. 

By (\ref{eqn-101010}), we deduce that 
\begin{eqnarray}\label{eqn-july165}
2^nf_{\bD}(x) 
&=& \sum_{\wt(w) \neq n/2} \hat{f}_{\bD} (w) (-1)^{w\cdot x} + 
   \sum_{\wt(w) = n/2} \hat{f}_{\bD} (w) (-1)^{w\cdot x} \nonumber \\
&=& \Delta \sum_{h=0}^n P_{n/2}(h) \sum_{\wt(w)=h} (-1)^{w\cdot x} \mbox{ \hfill (by (\ref{eqn-july162}) and (\ref{eqn-july163})) } \nonumber \\
& & - \Delta P_{n/2}(n/2) \sum_{\wt(w)=n/2} (-1)^{w\cdot x} 
    + \sum_{\wt(w) = n/2} \hat{f}_{\bD} (w) (-1)^{w\cdot x} \nonumber \\  
&=& \Delta \sum_{h=0}^n P_{n/2}(h) P_{h}(\wt(x)) \mbox{ \hfill (by Theorem \ref{thm-july163}) } \nonumber \\
& & - \Delta P_{n/2}(n/2) P_{n/2}(\wt(x))  
    + \sum_{\wt(w) = n/2} \hat{f}_{\bD} (w) (-1)^{w\cdot x} \nonumber \\  
&=& 2^n \Delta \delta_{n/2, \wt(x)} \mbox{ \hfill (by Theorem \ref{thm-july164}) } \nonumber \\
& & - \Delta P_{n/2}(n/2) P_{n/2}(\wt(x))  
    + \sum_{\wt(w) = n/2} \hat{f}_{\bD} (w) (-1)^{w\cdot x}.   
\end{eqnarray} 

It follows from (\ref{eqn-july161}) that 
\begin{eqnarray}\label{eqn-july166}
\sum_{\wt(w) = n/2} \hat{f}_{\bD} (w) (-1)^{w\cdot x} 
&=& \sum_{w \in \varphi^{-1}(\cB)} a (-1)^{w\cdot x} + \sum_{w \in \varphi^{-1}(\tilde{\cB})} \tilde{a} (-1)^{w\cdot x} \nonumber \\ 
&=& \sum_{w \in \varphi^{-1}(\cB)} (a-\tilde{a}) (-1)^{w\cdot x} + \sum_{\wt(w)=n/2} \tilde{a} (-1)^{w\cdot x} \nonumber \\ 
&=&  (a-\tilde{a})\hat{f}_{\bD}(x) +  \tilde{a} P_{n/2}(\wt(x)).  
\end{eqnarray} 

Combining (\ref{eqn-july165}) and (\ref{eqn-july166}) yields 
\begin{eqnarray}\label{eqn-july167}
2^n f_{\bD}(x)=2^n \Delta \delta_{n/2, \wt(x)} + \left(\tilde{a}-\Delta P_{n/2}(n/2)\right) P_{n/2}(\wt(x)) 
+ (a-\tilde{a})\hat{f}_{\bD}(x).    
\end{eqnarray}  
Consequently, when $x=\bzero$, 
\begin{eqnarray}\label{eqn-july168}
\left(\tilde{a}-\Delta P_{n/2}(n/2)\right) P_{n/2}(0) 
+ (a-\tilde{a})\hat{f}_{\bD}(\bzero)=0.    
\end{eqnarray}  
By Theorem \ref{thm-ms21}, $P_{n/2}(0)=\binom{n}{n/2}$. By definition, 
$$ 
\hat{f}_{\bD}(\bzero)=\lambda_0=b=\frac{\binom{n}{(n-2)/2}}{\binom{n/2}{1}}. 
$$ 
Solving this equation gives 
$$ 
\tilde{a}=-\frac{2(a - P_{n/2}(n/2))}{n}. 
$$
This proves the following theorem. 

\begin{theorem}\label{thm-SteinerSpect3}
Let $w \in \gf(2)^n$ be such that $|\support(w)|=n/2$ and $\support(w)$ is not a block of an $(n-2)/2$-$(n, n/2, 1)$ design 
$\bD=(\cP, \cB)$. Then 
\begin{eqnarray}\label{eqn-yyyiii} 
\hat{f}_{\bD}(w) = -\frac{2(a - P_{n/2}(n/2))}{n}, 
\end{eqnarray}
where 
\begin{eqnarray*}
a = 2^{n/2} - \frac{2\sum_{h=1}^{n/2} \binom{n/2}{h} \sum_{\ell=0}^{h-1} (-1)^\ell \binom{(n+2)/2}{\ell +1}}{n+2}.  
\end{eqnarray*}
\end{theorem}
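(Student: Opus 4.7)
The plan is to establish that $\hat{f}_{\bD}(w)$ takes only two values on vectors of weight $n/2$ — namely $a$ from Theorem~\ref{thm-SteinerSpect1} when $\support(w)\in\cB$, and a second constant $\tilde a$ when $\support(w)\notin\cB$ — and then to determine $\tilde a$ by applying the inverse Walsh transform and specialising at the origin. The two-value dichotomy follows from Theorem~\ref{thm-Steinerspect2}: the linear system (\ref{eqn-final28}) that determines the intersection profile $(y_0,\ldots,y_{n/2})$, and hence $\hat{f}_{\bD}(w)$, depends on $w$ only through the indicator of $\support(w)\in\cB$ (together with Corollary~\ref{cor-Steinerselfcompl} which makes the two boundary cases $y_0=y_{n/2}$ symmetric). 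This is exactly the situation encoded in (\ref{eqn-july161}).

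Next I would apply the inverse Walsh transform identity (\ref{eqn-101010}) to $f_{\bD}(x)$, splitting the sum over $w$ into the strata $\wt(w)\neq n/2$ and $\wt(w)=n/2$. For $\wt(w)\in\{0,1,\ldots,(n-2)/2\}$, Theorem~\ref{thm-maincharacterisationthm} gives $\hat{f}_{\bD}(w)=\Delta\, P_{n/2}(\wt(w))$ with $\Delta=2/(n+2)$, since $\binom{n-t}{k-t}=(n+2)/2$. For the complementary weights $\wt(w)\in\{(n+2)/2,\ldots,n\}$, I would use Lemma~\ref{lem-complementaryWalsh} together with $P_{n/2}(h)=(-1)^{n/2}P_{n/2}(n-h)$ (Theorem~\ref{thm-july161}), recalling that $n\equiv 0\pmod 4$ by Theorem~\ref{thm-fourthfactor}, so $k=n/2$ is even and both sign flips are trivial. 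This unifies the off-stratum contribution into $\Delta\sum_{h=0}^n P_{n/2}(h)\bigl(\sum_{\wt(w)=h}(-1)^{w\cdot x}\bigr)$ minus a correction for the duplicated $h=n/2$ term; Theorem~\ref{thm-july163} replaces the inner sum by $P_h(\wt(x))$, and then Theorem~\ref{thm-july164} collapses the $h$-sum to $2^n\delta_{n/2,\wt(x)}$. On the weight-$n/2$ stratum I would substitute the decomposition (\ref{eqn-july161}), extracting a term $(a-\tilde a)\hat{f}_{\bD}(x)$ plus $\tilde a\,P_{n/2}(\wt(x))$, arriving at the identity (\ref{eqn-july167}).

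Finally I would set $x=\bzero$. Then $f_{\bD}(\bzero)=0$ (since $k=n/2\ge 2$), $\delta_{n/2,0}=0$, $P_{n/2}(0)=\binom{n}{n/2}$ by Theorem~\ref{thm-ms21}, and $\hat{f}_{\bD}(\bzero)=b=\lambda_0$ is the block count from Lemma~\ref{lem-designbasicp}. The identity reduces to a single linear equation (\ref{eqn-july168}) in the unknown $\tilde a$, which rearranges to $\tilde a=-2(a-P_{n/2}(n/2))/n$ after using $b/\binom{n}{n/2}=2/n$ for a Steiner system with these parameters. The main obstacle is the careful bookkeeping of the $h=n/2$ term: it is double-counted when the outer Krawtchouk sum is extended to $0\le h\le n$, and its removal must be tracked consistently through both the symmetrisation step (Theorem~\ref{thm-july161}) and the collapse via Theorem~\ref{thm-july164}. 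Once that correction is clean the algebra is routine and yields the claimed formula.
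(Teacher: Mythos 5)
Your proposal follows essentially the same route as the paper's own derivation: the two-value dichotomy (\ref{eqn-july161}), the inverse Walsh transform split by weight strata, the symmetrisation via Lemma \ref{lem-complementaryWalsh} and Theorem \ref{thm-july161}, the collapse via Theorems \ref{thm-july163} and \ref{thm-july164}, and the specialisation at $x=\bzero$ leading to (\ref{eqn-july168}). One small correction in your final step: for these parameters $b=\binom{n}{(n-2)/2}/\binom{n/2}{1}=\frac{2}{n+2}\binom{n}{n/2}=\Delta\, P_{n/2}(0)$, so $b/\binom{n}{n/2}=2/(n+2)$, not $2/n$; the factor $2/n$ in the answer arises as $\Delta/(1-\Delta)$ when you solve $\tilde a(1-\Delta)=\Delta\bigl(P_{n/2}(n/2)-a\bigr)$, and with that fixed the algebra does yield the claimed formula.
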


\subsection{The existence of Steiner systems $(n-2)/2$-$(n, n/2, 1)$}

We are concerned with the existence of $(n-2)/2$-$(n, n/2, 1)$ designs 
for even $n$. The integers $n$ in the range $8 \leq n \leq 150$ that satisfies the  
conditions in (\ref{eqn-tdesignnecessty}), (\ref{eqn-tdesignnecessty2}) and 
(\ref{eqn-tdesignnecesstySpecial}) are given in the set 
\begin{eqnarray}
\{8, 12, 20, 24, 32, 36, 44, 56, 60, 72, 80, 84, 92, 104, 116, 120, 132, 140,
144\}. 
\end{eqnarray} 
The parameters $(n-2)/2$-$(n, n/2, 1)$ for all the $n$ in the set above also satisfy the conditions in  
(\ref{eqn-filter2}), (\ref{eqn-filter3}), and (\ref{eqn-filter4}). 
So, they are admissible parameters of $(n-2)/2$-$(n, n/2, 1)$ designs  
according to these known necessary conditions.  

Experimental data indicates that there are infinitely many admissible parameters $(n-2)/2$-$(n, n/2, 1)$. 
Steiner systems with parameters $3$-$(8,4,1)$ and $5$-$(12,6,1)$ do exist. 

\subsection{The construction of Steiner systems $(n-2)/2$-$(n, n/2, 1)$} 

The correspondence from a Boolean function $f(x)$ to its spectra is not one-to-one. For the 
characteristic function $f_{\bD}(x)$ of an $(n-2)/2$-$(n, n/2, 1)$ Steiner system, $\hat{f}_{\bD}(w)$ 
is a constant for all $w \in \gf(2)^n$ with fixed weight $h$ except $h=n/2$. Since 
$\hat{f}_{\bD}(w)$ takes on two distinct values for all $w \in \gf(2)^n$ with $\wt(w)=n/2$, 
the spectra of an $(n-2)/2$-$(n, n/2, 1)$ Steiner system does not give enough information for 
constructing the characteristic function of such Steiner system with the inverse Walsh transform 
approach.

\section{Binary linear codes from the characteristic functions of $t$-designs} 

The incidence matrix of a $t$-$(n, k, \lambda)$ design $\bD$ can be viewed as a matrix over any field 
$\gf(q)$ and its rows span a linear code of length $n$ over $\gf(q)$. This is the classical construction 
of linear codes from $t$-designs and has been intensively studied \cite{AK92}. 

Any $t$-$(n, k, \lambda)$ design $\bD$ can also be employed to construct a binary linear code of length 
$2^n-1$ and dimension $n+1$. This is done via the characteristic Boolean function of the design. It is  
likely that the weight distribution of the code could be determined. Below we demonstrate this approach 
with $(n-2)/2$-$(n, n/2, 1)$ Steiner systems. 

Let $f(x)$ be a Boolean function with $n$ variables such that $f(\bzero)=0$ but $f(x)=1$ for at least 
one $x \in \gf(2)^n$. We now define a linear code by 
\begin{eqnarray}\label{eqn-mycode}
\C_f=\{(uf(x)+v \cdot x)_{x \in \gf(2)^n \setminus \{\bzero\}}: u \in \gf(2), \ v \in \gf(2)^n\}. 
\end{eqnarray}    
This construction goes back to \cite{CH17, CD07, Japanese}. 

The following theorem should be well known. However, for completeness we will sketch a proof for it. 

\begin{theorem}\label{thm-codeparam}
The binary code $\C_f$ in (\ref{eqn-mycode}) has length $2^n-1$ and dimension $n+1$. In addition, 
the weight distribution of $\C_f$ is given by the following multiset union: 
\begin{eqnarray*}
\{ 2^{n-1}+ \hat{f}(w): w \in \gf(2)^n \setminus \{\bzero\} \} \cup \{\hat{f}(0)\} 
\cup \{2^{n-1}: w \in \gf(2)^n \setminus \{\bzero\} \} \cup \{0\}. 
\end{eqnarray*}
\end{theorem}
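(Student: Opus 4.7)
The plan is to exploit the explicit parametrisation of codewords by pairs $(u,v) \in \gf(2) \times \gf(2)^n$ and compute the weight of each codeword directly using the Walsh transform. The length is immediate from the definition: the coordinates are indexed by $x \in \gf(2)^n \setminus \{\bzero\}$, giving length $2^n - 1$. Since the map $(u,v) \mapsto (uf(x) + v\cdot x)_{x \ne \bzero}$ is $\gf(2)$-linear, the dimension is at most $n+1$.

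For the weight distribution I would split into the two cases determined by $u$. When $u = 0$, the codeword is $(v\cdot x)_{x \ne \bzero}$; for $v = \bzero$ this is the zero codeword, while for $v \ne \bzero$ the linear form $x \mapsto v\cdot x$ is balanced on $\gf(2)^n$ and vanishes at $\bzero$, so removing the coordinate $x = \bzero$ leaves a codeword of weight exactly $2^{n-1}$. When $u = 1$, the key point is that $f(\bzero) = 0$, so $f(\bzero) + v\cdot\bzero = 0$ and the Hamming weight on $\gf(2)^n \setminus \{\bzero\}$ coincides with the number of $x \in \gf(2)^n$ satisfying $f(x) + v\cdot x = 1$. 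Using the identity $(-1)^{f(x)} = 1 - 2f(x)$, this number equals
\[
\frac{1}{2}\Bigl(2^n - \sum_{x \in \gf(2)^n} (-1)^{f(x) + v\cdot x}\Bigr)
= \frac{1}{2}\Bigl(2^n - \sum_{x \in \gf(2)^n} (-1)^{v\cdot x} + 2\,\hat{f}(v)\Bigr).
\]
If $v = \bzero$, the inner sum equals $2^n$ and the weight collapses to $\hat{f}(\bzero)$; if $v \ne \bzero$, the inner sum vanishes and the weight equals $2^{n-1} + \hat{f}(v)$. Collecting all four subcases yields exactly the multiset in the statement.

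For the dimension, I would verify injectivity of the parametrisation. If $(u_1,v_1)$ and $(u_2,v_2)$ produce the same codeword, then $(u_1+u_2) f(x) + (v_1+v_2)\cdot x = 0$ for every $x \ne \bzero$. If $u_1 = u_2$, a nonzero linear form cannot vanish on $\gf(2)^n \setminus \{\bzero\}$, so $v_1 = v_2$. If $u_1 \ne u_2$, then $f(x) = (v_1+v_2)\cdot x$ on $\gf(2)^n \setminus \{\bzero\}$, and combined with $f(\bzero) = 0$ this forces $f$ to be a (possibly zero) linear Boolean function. In the intended applications — in particular whenever $f = f_{\bD}$ for a $t$-design with block size $k \ge 2$, so that the algebraic normal form of Theorem \ref{thm-anmprop} has degree at least $k$ — this situation is excluded, so the map is injective and $\dim \C_f = n+1$.

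The proof is essentially a bookkeeping exercise rather than a genuine obstacle; the only point that warrants care is the passage from sums over $\gf(2)^n$ to sums over $\gf(2)^n \setminus \{\bzero\}$, which is clean because $f(\bzero)=0$ makes the excluded coordinate contribute $0$ in every relevant case. The non-degeneracy hypothesis needed to secure dimension $n+1$ (namely that $f$ is not affine linear) is harmless for the characteristic-function applications that motivate the construction.
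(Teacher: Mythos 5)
Your proof is correct and follows essentially the same route as the paper: both compute $\sum_{x}(-1)^{f(x)+v\cdot x}=\sum_x(1-2f(x))(-1)^{v\cdot x}$ to convert codeword weights into Walsh values, using $f(\bzero)=0$ to pass from $\gf(2)^n$ to $\gf(2)^n\setminus\{\bzero\}$. Your injectivity discussion for the dimension is in fact more careful than the paper's one-line claim (``since $f$ is not the zero function, the dimension must be $n+1$''), which silently overlooks the degenerate case where $f$ coincides with a nonzero linear form on $\gf(2)^n\setminus\{\bzero\}$ --- a case you correctly identify and correctly dismiss for the characteristic functions of designs with $k\geq 2$.
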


\begin{proof}
It is easily seen that 
\begin{eqnarray*}
\sum_{x \in \gf(2)^n} (-1)^{f(x)+w \cdot x} 
= \left\{ 
\begin{array}{ll}
2^n-2\hat{f}(\bzero) & \mbox{ if } w = \bzero, \\ 
-2\hat{f}(w)         & \mbox{ if } w \neq \bzero. 
\end{array}
\right. 
\end{eqnarray*} 
On the other hand, 
$$ 
\sum_{x \in \gf(2)^n} (-1)^{f(x)+w \cdot x} = 
2^n - 2 |\{x \in \gf(2)^n \setminus \{\bzero\}: f(x)+w\cdot x =1 \}|. 
$$ 
Combining the two equations above yields the desired conclusion on the weight distribution. Since 
$f$ is not the zero function, the dimension of the code $\C_f$ must be $n+1$. 
\end{proof}

For any $(n-2)/2$-$(n, n/2, 1)$ design $\bD$, the spectra of the characteristic function $f_{\bD}$ 
were completely determined in Section \ref{sec-SpectraSteinerAll}. Hence, one can write out the weight 
distribution of the binary linear code $\C_{f_{\bD}}$ with the help of Theorem \ref{thm-codeparam}.

\begin{example} 
Let $f_{\bD}$ be the characteristic function of the Steiner system $S(5, 6, 12)$ in Example 
\ref{exam-SteinerSys}. Then the binary code $\C_{f_{\bD}}$ has parameters $[2^{12}-1, 13, 132]$ 
and the weight distribution 
in Table \ref{tab-mycode2}. 
\end{example}    

\begin{table}[ht]
\center 
\caption{Weight distribution}\label{tab-mycode2}
{
\begin{tabular}{lr}
\hline
Weight $w$    & No. of codewords $A_w$  \\ \hline
$0$          & $1$ \\ 
$132$        & $1$ \\
$2^{11}-12$  & $924$ \\
$2^{11}$      & $6143$ \\
$2^{11}+4$  & $990$ \\
$2^{11}+52$  & $132$ \\
$2^{11}+132$  & $1$ \\ 
\hline 
\end{tabular}
}
\end{table}

Another construction of binary linear codes with Boolean functions was treated in \cite{Ding163}. 
After plugging the characteristic function $f_{\bD}$ of any $t$-$(n, k, \lambda)$ design into this 
construction, one 
obtains a binary linear code of length $\lambda\binom{n}{t}/\binom{k}{t}$ and dimension $n$ with 
at most $n+1$ weights.

\section{Conclusions and remarks}

The main contribution of this paper is the spectral characterisation of $t$-designs documented in 
Theorem \ref{thm-maincharacterisationthm}. It is open how to use this characterisation to construct 
or show the existence of $t$-designs with certain parameters. It might be possible to show the 
nonexistence of certain $t$-designs with this characterisation. The second contribution  
is the new necessary condition for the existence of $t$-$(n, k, \lambda)$ designs given in Corollary 
\ref{cor-divisibility2}. The third contribution is the results of the algebraic normal form of the 
characteristic function $f_{\bD}(x)$ of $t$-designs summarised in Theorem \ref{thm-anmprop}. 
Another contribution is the self-complementary property of $(n-2)/2$-$(n, n/2, 1)$ Steiner 
systems introduced in Corollary \ref{cor-Steinerselfcompl}.  
The last contribution is the properties of the spectra $\hat{f}_{\bD}(w)$ for Steiner systems 
with parameters $(n-2)/2$-$(n, n/2, 1)$, which was described in Section \ref{sec-may27}. 
The determination of the spectra $\hat{f}_{\bD}(w)$ for a Steiner system  
with parameters $(n-2)/2$-$(n, n/2, 1)$ allows the determination of the weight distributions of 
two binary linear codes constructed from the Steiner system. Hence, we demonstrated at least 
three ways of constructing a linear code with a $t$-design in this paper.     
  
It was conjectured that the divisibility conditions in (\ref{eqn-tdesignnecessty}) are also efficient 
for the existence of $t$-$(n, k, \lambda)$ Steiner systems except a finite number of exceptional $n$ given 
fixed $t$, $k$ and $\lambda$. Earlier progresses on this conjecture were made in \cite{Wilson72a,Wilson72b,Wilson75}. It is open if 
the characterisation in Theorem \ref{thm-maincharacterisationthm} could be employed to attack this 
problem in a different way. 

As justified in \ref{sec-Dels1} and \ref{sec-Dels2}, the spectral characterisation 
in Theorem \ref{thm-maincharacterisationthm} is different from and much simpler than 
the spectral characterisations of Theorem \ref{thm-delstdesign} and Corollary \ref{cor-May231}.   
As made clear in \ref{sec-Dels3}, 
it is impossible for Theorem \ref{thm-maincharacterisationthm} to be a special case of 
Delarte's Assmus-Mattson Theorem (i.e. Theorem \ref{thm-Delsarte'sAM}). In summary, 
there are three spectral characterisations of combinatorial $t$-designs. The characterisation 
of Theorem \ref{thm-maincharacterisationthm} developed in this paper is the simplest 
and does not depend on the theory of association schemes. In addition, this characterisation 
leads to two applications in coding theory.    

\section*{Acknowledgements} 

The authors thank Dr. Shuxing Li for reading an earlier version of this paper and 
providing helpful comments, and Dr. Yan Zhu for providing the reference for Theorem 
\ref{thm-may183} and helpful comments on $T$-designs and relative $t$-designs.  
The research of Cunsheng Ding was supported by the Hong Kong Research Grants Council, 
under Project No. 16300415.

\appendix 
\section{Spectral extensions of Delsarte's characterisations} 

Delsarte gave a characterisation of combinatorial $t$-designs with $T$-designs in the Johnson scheme and a characterisation of orthogonal arrays with $T$-designs in the Hamming scheme \cite{Dels73}. Delsarte and Seidel gave a characterisation of combinatorial $t$-designs 
with relative $t$-designs \cite{DelsSeid}.  
The objective of this appendix is to extend the three characterisations into spectral ones. The ultimate purpose of this appendix is to show that the spectral characterisation of this paper documented in Theorem \ref{thm-maincharacterisationthm} is different from those of Delsarte and Seidel.      

\subsection{Association schemes} 

An \textit{association scheme with $n$ classes} consists of a finite set $X$ with $v$ points 
together with $n+1$ relations $R_0, R_1, \ldots, R_n$ defined on $X$ which satisfy the following: 
\begin{itemize}
\item[(i)] Each $R_i \subseteq X \times X$ is symmetric, i.e., $(x, y) \in R_i$ implies that 
           $(y, x) \in R_i$. 
\item[(ii)] $R_0, R_1, \ldots, R_n$ form a partition of $X \times X$. 
\item[(iii)] $R_0=\{(x,x): x \in X\}$ is the identity relation.   
\item[(iv)] If $(x, y) \in R_k$, the number of $z \in X$ such that $(x, z) \in R_i$ and 
            $(y, z) \in R_j$ is a constant $c_{ijk}$ depending on $i,j,k$ but not on the 
            particular choice of $x$ and $y$.                   
\end{itemize} 
Let $R=\{R_0, R_1, \ldots, R_n\}$. We call $(X, R)$ an association scheme.  

Let $(X, R)$ be an association scheme, where $R=\{R_0, R_1, \ldots, R_n\}$. 
The \emph{adjacency matrix} $D_i$ of $R_i$ is the $v \times v$ matrix with rows and columns 
labelled by the points of $X$, which is defined by 
\begin{eqnarray*}
(D_i)_{x,y}=\left\{ 
\begin{array}{ll}
1 & \mbox{if $(x, y) \in R_i$,} \\
0 & \mbox{otherwise.} 
\end{array}
\right. 
\end{eqnarray*}   
The Bose-Mesner algebra $\A$ consists of all matrices $\sum_{i=0}^n a_i D_i$, where all $a_i$ 
are real numbers. The Bose-Mesner algebra $\A$ has the following properties \cite[p. 653]{MS77}: 
\begin{itemize}
\item All the matrices in $\A$ are symmetric. 
\item $D_0, D_1, \ldots, D_n$ are linearly independent, and the dimension of $\A$ is $n+1$. 
\item $\A$ has a unique basis of primitive idempotents $J_0, J_1, \ldots, J_n$, which satisfy 
      \begin{eqnarray*}
      J_i^2 &=& J_i, \ i =0, 1, \ldots, n, \\
      J_iJ_k &=& 0, \ i \neq k, \\
      \sum_{i=0}^n J_i &=& I, 
      \end{eqnarray*}       
      where $I$ is the identity matrix.   
\end{itemize}  
Any of the two bases above of the Bose-Mesner algebra $\A$ can be expressed in terms of the other. 
Let 
$$ 
D_k = \sum_{i=0}^n p_k(i) J_i, \ k=0, 1, \ldots, n
$$ 
and 
$$ 
J_k=\frac{1}{v} \sum_{i=0}^n q_k(i) D_i, \ k =0, 1, \ldots, n. 
$$ 
These $p_k(i)$ are the eigenvalues of $D_k$. Let $\rank(J_i)$ be the multiplicity of the 
eigenvalue $p_k(i)$. The matrices $P=[p_i(j)]$ and $Q=[q_i(j)]$ are called the first and  
second eigenmatrices of the scheme. 

An association scheme is called a $P$-polynomial scheme if there exist nonnegative real numbers $z_0=0$, $z_1, \ldots, z_n$ and real polynomials $\Phi_0(z), \Phi_1(z), \ldots, \Phi_k(z)$, 
where $\deg(\Phi_k(z))=k$, such that 
$$ 
p_k(i)=\Phi_k(z_i), \ \ i, k = 0, 1, \ldots, n. 
$$ 
$Q$-polynomial schemes are defined similarly. 

We consider a nonempty subset $C$ of an arbitrary association scheme $(X, R)$ with relations
$R_0,R_1,\ldots,R_n$. The \emph{inner distribution} $B_i$ of $C$ is defined by
\[
B_i=\frac{1}{|C|}|R_i\cap C^2|, \ i=0,1,\ldots,n.
\]
The \emph{outer distribution} $B'_k$ of $C$ is defined by 
\[
B'_k:=\frac{1}{|C|}\sum_{i=0}^n q_k(i)B_i 
\]
for $k=0,1,\ldots,n$, where $q_k(i)$ are entries of the second eigenmatrix $Q$ of $(X, R)$. Delsarte proved that $B'_k \geq 0$ for 
all $k$ with $0 \leq k \leq n$ \cite{Dels73}.

Two useful association schemes are the Hamming scheme and Johnson scheme. Let $X=\gf(2)^n$. Define 
$$ 
R_i=\{(x,y) \in X \times X: \dist(x, y)=i\}
$$ 
for all $i$ with $0 \leq i \leq n$, where $\dist(x, y)$ denotes the Hamming distance between  
$x$ and $y$. It is well known that $(X, R)$ is an association scheme, 
and is called the Hamming scheme (see \cite[p. 665]{MS77} and \cite{Dels73}). The Hamming 
scheme is both a $P$-polynomial and $Q$-polynomial scheme. 

Let $X=\binom{[1..v]}{k}$, which is the set of all $k$-subsets of the set $[1..v]$, 
and where $k \leq v/2$. Define 
$$ 
R_i=\{(x, y) \in X \times X: |x \cap y|=k-i \} 
$$ 
for all $i$ with $0 \leq i \leq k$. It is well known that $(X, \{R_0, \ldots, R_k\})$ 
is an association scheme, and is called the Johnson scheme (see \cite[p. 656]{MS77} and \cite[Section 4.2]{Dels73}).  

Let $(X, R)$ be an association scheme with $n$ classes and let $T$ be any subset of 
$[1..n]$. Let $Y$ be any nonempty subset of $X$. Define 
\begin{eqnarray}\label{eqn-may191}
(BQ)_i:=\sum_{j=0}^n B_j q_i(j), 
\end{eqnarray}
where $B=(B_0, B_1, \ldots, B_n)$ is the inner distribution of $Y$, and $Q=[q_i(j)]$ 
is the second eigenmatrix of the scheme. 
The subset $Y$ of $X$ is called a \textit{$T$-design with respect to $R$} if 
$(BQ)_i=0$ for all $i \in T$ \cite[p. 32]{Dels73}. 

Delsarte gave a characterisation 
of $T$-designs in association schemes \cite[Theorem 3.10]{Dels73}. 
Due to space limitation, we will not document it here. 
In the cases of the Hamming scheme and Johnson scheme, Delsarte's characterisation of 
$T$-designs in association schemes becomes a characterisation of orthogonal arrays 
and combinatorial $t$-designs, respectively. We will introduce and extend them in 
the next two subsections.

\subsection{A spectral extension of Delsarte's characterisation of orthogonal arrays}\label{sec-Dels1} 

An $M \times n$ matrix $A$ with entries from a set of $q$ elements is called an \textit{orthogonal array} of size $M$, $n$ constraints, $q$ levels, strength $k$, and index $\lambda$ if any set 
of $k$ columns of $A$ contains all $q^k$ possible row vectors exactly $\lambda$ times. 
Such an array is denoted by $(M,n, q, k)$. Clearly $M = \lambda q^k$. In this section, 
we consider only the case that $q=2$. 

Now we introduce Delsarte's characterisation of orthogonal arrays for $q=2$, which 
is a special case of his characterisation of $T$-designs in general association schemes 
\cite[Theorem 3.10]{Dels73}. We will follow the refined presentation given in 
\cite[Chapter 21]{MS77}. 

Let $C$ be a subset of $\operatorname{GF}(2)^n$. 
Let $1_C$ denote the characteristic function of $C$, 
which can be viewed as a Boolean function on $\operatorname{GF}(2)^n$.

We denote by $B_i$ the distance distribution (i.e., the inner distribution) of $C$, namely,
\[
B_i =\frac{1}{|C|}|\{(u,v ) \in C \times C : \dist(u, v) = i\}|, 
\]
where $ \dist(u, v)$ denotes the Hamming distance between $u$ and $v$. 
We define the dual distance distribution (i.e., the outer distribution) $B^{'}_i$ by 
\[
B'_k=\frac{1}{|C|}\sum_{i=0}^n P_k(i)B_i \text{ for } k\in[0..n].
\]

The following is Delsarte's characterisation of orthogonal arrays \cite[Theorem 3.10]{Dels73} 
and is a refined version given in \cite[Chapter 21, Theorem 16]{MS77}. 

\begin{theorem}\label{thm-Dels1}
The set $C$ of vectors of $\gf(2)^n$ (viewed as a matrix) is an orthogonal array of size $|C|$, $n$ constraints, $2$ levels, 
strength $t$ and index $|C|/2^t$ if and only if $B'_1=B'_2=\cdots=B'_t=0$.
\end{theorem}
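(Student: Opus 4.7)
The plan is to reduce both sides of the equivalence to the common spectral condition that $\hat{1}_C(w) = 0$ for every $w \in \gf(2)^n$ with $1 \leq \wt(w) \leq t$, where $1_C$ is the characteristic Boolean function of $C$.

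First, I would derive the identity
$$ |C|^2 B'_k \;=\; \sum_{w \in \gf(2)^n,\, \wt(w) = k} \hat{1}_C(w)^2. $$
To do this, expand
$$ |C|\, B'_k \;=\; \sum_{i=0}^n P_k(i) B_i \;=\; \frac{1}{|C|} \sum_{(u,v) \in C \times C} P_k(\dist(u,v)), $$
and substitute $P_k(\dist(u,v)) = P_k(\wt(u+v)) = \sum_{\wt(w)=k} (-1)^{w \cdot (u+v)}$ via Theorem \ref{thm-july163}. Swapping the order of summation and factoring $(-1)^{w\cdot u}(-1)^{w\cdot v}$ yields the claimed sum-of-squares identity. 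Since every term on the right is non-negative, $B'_k = 0$ is equivalent to $\hat{1}_C(w) = 0$ for every $w$ of weight $k$. Therefore $B'_1 = B'_2 = \cdots = B'_t = 0$ is equivalent to $\hat{1}_C(w) = 0$ for all $w$ with $1 \leq \wt(w) \leq t$.

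Second, I would translate the orthogonal-array condition into the same spectral statement. By definition, $C$ (viewed as a matrix) is an orthogonal array of strength $t$ and index $|C|/2^t$ precisely when, for every $t$-subset $S \subseteq [1..n]$, the restriction $C|_S$ contains each tuple in $\gf(2)^t$ exactly $|C|/2^t$ times. By character orthogonality on the group $\gf(2)^t$, such uniformity on $S$ is equivalent to
$$ \sum_{u \in C} (-1)^{w' \cdot u|_S} \;=\; 0 $$
for every non-zero $w' \in \gf(2)^S$. Extending $w'$ to $w \in \gf(2)^n$ by placing zeros outside $S$ gives $w \cdot u = w' \cdot u|_S$ and $\wt(w) = \wt(w') \leq t$; letting $S$ range over all $t$-subsets of $[1..n]$, the vectors $w$ thus obtained are exactly those with $1 \leq \wt(w) \leq t$, so the OA condition is equivalent to $\hat{1}_C(w) = 0$ for every such $w$.

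Combining the two reductions yields the theorem. The main technical step is the initial identity, which requires pushing the dual-distance sum through Theorem \ref{thm-july163} to expose a sum of squares of Walsh values; once this is done, both sides of the desired equivalence visibly coincide with the same spectral condition on $\hat{1}_C$, and the remaining verification is routine character theory together with the observation that every nonzero $w \in \gf(2)^n$ of weight at most $t$ is supported in some $t$-subset of $[1..n]$.
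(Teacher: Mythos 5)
Your proposal is correct, but it is worth noting that the paper does not actually prove Theorem \ref{thm-Dels1}: it is quoted as a known result of Delsarte \cite[Theorem 3.10]{Dels73}, in the refined form of \cite[Chapter 21, Theorem 16]{MS77}, and the paper's own work begins only afterwards with Lemma \ref{lem-kor1} and Theorem \ref{thm-joint1}. Your argument therefore supplies a self-contained proof of the cited statement, and it does so by essentially fusing the paper's Lemma \ref{lem-kor1} with the missing half of Delsarte's argument. Your first reduction, the identity $|C|^2 B'_k = \sum_{\wt(w)=k}\hat{1}_C(w)^2$, is exactly the second assertion of Lemma \ref{lem-kor1}; you reach it a little more directly than the paper does (you substitute Theorem \ref{thm-july163} straight into the pair sum $\sum_{(u,v)\in C\times C}P_k(\dist(u,v))$ and factor the square, whereas the paper routes through the Walsh transform of the indicator function of the Hamming sphere $\binom{[1..n]}{i}$ and then invokes the orthogonality relation of Theorem \ref{thm-july164}). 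The nonnegativity of the summands, which you use to pass from $B'_k=0$ to the vanishing of each $\hat{1}_C(w)$, is the right observation and is also what makes the paper's Theorem \ref{thm-joint1} follow from Lemma \ref{lem-kor1}. Your second reduction, translating the orthogonal-array condition into the vanishing of $\hat{1}_C(w)$ for all $w$ with $1\le\wt(w)\le t$ via character orthogonality on $\gf(2)^t$ restricted to each $t$-subset of coordinates, is the piece the paper delegates entirely to the references; it is standard and correct (note only that the set, rather than multiset, interpretation of $C$ is implicitly used, consistent with the paper). In effect you prove Theorem \ref{thm-Dels1} and Theorem \ref{thm-joint1} simultaneously, which is a perfectly legitimate and arguably cleaner organization of this portion of the appendix.
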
 

This characterisation is not a spectral characterisation. Below we extend it into a spectral 
characterisation. To this end, we need the next lemma which should be a known result in the literature. But we provide a different proof below. 

\begin{lemma}\label{lem-kor1}
Let $C$ be a nonempty subset of $\operatorname{GF}(2)^n$. Then
\begin{eqnarray*}
B_k &=& \frac{1}{2^n|C|}\sum_{w\in\operatorname{GF}(2)^n}\hat{1}_C(w)^2P_k(\wt(w)),\\
B'_k &=& \frac{1}{|C|^2}\sum_{w\in{[1..n] \choose k}}\hat{1}_C(w)^2.
\end{eqnarray*}
\end{lemma}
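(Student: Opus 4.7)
\noindent
The plan is to exploit the standard expansion of $\hat{1}_C(w)^2$ as a double character sum over $C\times C$ and then apply Theorem \ref{thm-july163} to evaluate character sums restricted to vectors of a fixed Hamming weight. Starting from the definition,
\[
\hat{1}_C(w)^2=\left(\sum_{x\in C}(-1)^{w\cdot x}\right)^2=\sum_{(x,y)\in C\times C}(-1)^{w\cdot(x+y)}.
\]
Summing this identity over all $w\in\gf(2)^n$ of Hamming weight $k$ and swapping the order of summation, the inner sum becomes $P_k(\wt(x+y))$ by Theorem \ref{thm-july163}. Grouping the resulting terms by the value $i=\wt(x+y)=\dist(x,y)$ and recognising that $|C|B_i$ is exactly the number of ordered pairs $(x,y)\in C\times C$ at distance $i$, I obtain
\[
\sum_{\wt(w)=k}\hat{1}_C(w)^2 = \sum_{i=0}^n P_k(i)\,|C|B_i = |C|^2 B'_k,
\]
where the second equality uses the definition $B'_k=\frac{1}{|C|}\sum_{i=0}^n P_k(i)B_i$. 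After dividing by $|C|^2$, this yields the second identity (interpreting the index set ${[1..n]\choose k}$ as the set of weight-$k$ vectors of $\gf(2)^n$ via the bijection $\varphi$).

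For the first identity, I would then invert the relationship just obtained using the orthogonality of Krawtchouk polynomials stated in Theorem \ref{thm-july164}. Specifically, stratify the sum over all $w\in\gf(2)^n$ by the weight $h=\wt(w)$ and substitute the formula for $\sum_{\wt(w)=h}\hat{1}_C(w)^2$ from the previous step:
\[
\sum_{w\in\gf(2)^n}\hat{1}_C(w)^2 P_k(\wt(w)) = \sum_{h=0}^n P_k(h)\sum_{\wt(w)=h}\hat{1}_C(w)^2 = |C|\sum_{i=0}^n B_i\sum_{h=0}^n P_k(h)P_h(i).
\]
Applying $\sum_{h=0}^n P_k(h)P_h(i)=2^n\delta_{k,i}$ collapses the right side to $2^n|C|B_k$, and dividing by $2^n|C|$ gives the first identity.

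Both halves of the lemma are essentially a mechanical unpacking; the only step that requires care is the double swap of summation orders and the correct identification of $|C|B_i$ with a count of ordered pairs in $C$. The main potential obstacle is merely notational: making sure that the index set ${[1..n]\choose k}$ in the statement is read as weight-$k$ vectors in $\gf(2)^n$ (via the correspondence $\varphi$ introduced earlier), and that the Krawtchouk orthogonality is applied in the correct direction. No deeper input beyond Theorems \ref{thm-july163} and \ref{thm-july164} is needed.
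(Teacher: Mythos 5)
Your proposal is correct, and it rests on exactly the same two ingredients as the paper's own proof, namely Theorems \ref{thm-july163} and \ref{thm-july164}. The only difference is the order of derivation: you establish the $B'_k$ identity first by expanding $\hat{1}_C(w)^2$ as a double sum over $C\times C$ and then recover the $B_k$ identity via Krawtchouk orthogonality, whereas the paper computes $B_i$ first (by expanding the indicator of the weight-$i$ sphere through its Walsh transform) and then obtains $B'_k$ from it by the same orthogonality relation; the two routes are dual and equally valid.
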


\begin{proof}
We first compute the distance distribution $B_i$ of $C$, and have 
\begin{eqnarray*}
B_i &=& \frac{1}{|C|}\sum_{u,v\in \operatorname{GF}(2)^n}1_C(u)1_C(v)1_{{[1..n]\choose i}}(u+v)\\
&=&\frac{1}{2^n|C|}\sum_{u,v\in \operatorname{GF}(2)^n}1_C(u)1_C(v)
\sum_{w\in\operatorname{GF}(2)^n}\hat{1}_{{[1..n]\choose i}}(w)(-1)^{(u+v)\cdot w}\\
&=&\frac{1}{2^n|C|}\sum_{w\in\operatorname{GF}(2)^n}\hat{1}_{{[1..n]\choose i}}(w)
\sum_{u,v\in \operatorname{GF}(2)^n}1_C(u)(-1)^{u\cdot w}1_C(v)(-1)^{v\cdot w}\\
&=&\frac{1}{2^n|C|}\sum_{w\in\operatorname{GF}(2)^n}\hat{1}_C(w)^2\hat{1}_{{[1..n]\choose i}}(w) \\ 
&=& \frac{1}{2^n|C|}\sum_{w\in\operatorname{GF}(2)^n}\hat{1}_C(w)^2P_i(\wt(w)), 
\end{eqnarray*} 
where the last identity follows from Theorem \ref{thm-july163}. 
It follows from Theorem \ref{thm-july164} that
\begin{eqnarray*}
B'_k &=& \frac{1}{|C|}\sum_{i=0}^n P_k(i)B_i \\ 
&=& \frac{1}{2^n|C|^2}\sum_{w\in\operatorname{GF}(2)^n}\hat{1}_C(w)^2
\sum_{i=0}^n P_k(i)P_i(\wt(w)) \\
&=& \frac{1}{|C|^2}\sum_{w\in\operatorname{GF}(2)^n}\hat{1}_C(w)^2\delta_{k,\wt(w)} \\
&=& \frac{1}{|C|^2}\sum_{w\in{[1..n]\choose k}}\hat{1}_C(w)^2,
\end{eqnarray*}
where $\delta$ is the Kronecker delta function.
\end{proof}

The following follows from Lemma \ref{lem-kor1} and Theorem \ref{thm-Dels1} directly. 

\begin{theorem}\label{thm-joint1}
The set $C \subseteq \gf(2)^n$ (viewed as a matrix) is an orthogonal array of size $|C|$, $n$ constraints, $2$ levels, 
strength $t$ and index $|C|/2^t$ if and only if 
$\hat{1}_{C}(w)=0$ for all $w\in \cup_{j=1}^t \binom{[1..n]}{j}$.
\end{theorem}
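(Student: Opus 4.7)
The plan is to derive Theorem \ref{thm-joint1} as an immediate corollary of Theorem \ref{thm-Dels1} together with the Walsh-transform formula for the dual distance distribution provided by Lemma \ref{lem-kor1}. The strategy is to rewrite the vanishing conditions $B'_1 = B'_2 = \cdots = B'_t = 0$ appearing in Theorem \ref{thm-Dels1} as pointwise vanishing conditions on $\hat{1}_C$, which is legitimate because the formula for $B'_k$ is a sum of squared real numbers and hence can equal zero only when every summand does.

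More concretely, I would proceed as follows. First, I would apply Theorem \ref{thm-Dels1} to reduce the orthogonal array property to the system of equations $B'_k = 0$ for $1 \leq k \leq t$. Second, I would invoke the second identity of Lemma \ref{lem-kor1}, namely
\begin{eqnarray*}
B'_k = \frac{1}{|C|^2} \sum_{w \in \binom{[1..n]}{k}} \hat{1}_C(w)^2,
\end{eqnarray*}
so that (under the identification $\varphi : \gf(2)^n \to 2^{[1..n]}$ of a vector with its support) the sum runs over all $w \in \gf(2)^n$ with $\wt(w) = k$. Third, I would observe that $\hat{1}_C(w)$ is a real number for every $w$, so each summand $\hat{1}_C(w)^2$ is nonnegative and the equation $B'_k = 0$ is equivalent to $\hat{1}_C(w) = 0$ for all $w$ with $\wt(w) = k$. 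Finally, taking the conjunction over $k = 1, 2, \ldots, t$ yields exactly the statement that $\hat{1}_C(w) = 0$ for every $w \in \bigcup_{j=1}^t \binom{[1..n]}{j}$, and the equivalence runs in both directions.

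There is no real obstacle here, since both inputs are already established: the combinatorial content sits in Theorem \ref{thm-Dels1}, and the spectral content sits in the identity of Lemma \ref{lem-kor1}. The only point that warrants a line of comment is the nonnegativity-of-squares argument, since the equivalence of a vanishing sum with the vanishing of each term fails over $\bC$ but holds here because the Walsh transform of a $\{0,1\}$-valued function on $\gf(2)^n$ is integer-valued (in particular real). The proof will therefore be short, essentially a two-step substitution followed by this nonnegativity remark.
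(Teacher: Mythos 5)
Your proposal is correct and matches the paper's own argument, which simply states that the theorem ``follows from Lemma \ref{lem-kor1} and Theorem \ref{thm-Dels1} directly''; the only step you make explicit that the paper leaves implicit is the observation that a sum of squares of real numbers vanishes if and only if each term does. That remark is worth including, and otherwise there is nothing to change.
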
 

Theorem \ref{thm-joint1} is a spectral characterisation of orthogonal arrays of two levels 
and is a slightly extended version of a special case (the Hamming scheme case) of Delsarte's 
characterisation of $T$-designs in association schemes in general \cite[Theorem 3.10]{Dels73}. 
Since orthogonal arrays and combinatorial $t$-designs are different, Theorem \ref{thm-joint1} 
is clearly different from Theorem \ref{thm-maincharacterisationthm}, which is the main contribution of this paper. Notice that Theorem \ref{thm-joint1} is not meant to be a new 
result.     

\subsection{A spectral extension of Delsarte's characterisation of $t$-designs}\label{sec-Dels2}

Let $C$ be a subset of $\binom{[1..n]}{k}$. The inner distribution $B_i$ of $C$ is defined by 
\[
B_i =\frac{1}{|C|}|\{(u,v ) \in C \times C : |u \cap v| = k-i\}|.
\] 
The outer distribution $B^{'}_l$ of $C$ is defined by  
\[
B'_l=\frac{1}{|C|}\sum_{j=0}^n\frac{\mu_l}{v_j} Q_j(l)B_j \text{ for } l\in[0..n],
\]
where $Q_l(x)$, called the Eberlein polynomial, is
\[
Q_l(x)=\sum_{j=0}^l(-1)^j{x\choose j}{k-x\choose l-j}{n-k-x\choose l-j},
\]
where $v_l={k\choose l}{n-k\choose l}$ and $\mu_l=\frac{n-2l+1}{n-l+1}{n\choose l}$
for $l\in[0..k]$.

The following is Delsarte's characterisation of combinatorial $t$-designs \cite[Theorem 3.10]{Dels73} 
and is a refined version given in \cite[Chapter 21, Theorem 15]{MS77}. 

\begin{theorem}\label{thm-Dels2}
Let $C$ be a subset of $\binom{[1..n]}{k}$. 
The incidence structure $([1..n], C)$ is a $t$-design 
if and only if $B'_1=B'_2=\cdots=B'_t=0$.
\end{theorem}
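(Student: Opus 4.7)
The plan is to recast both sides of the equivalence as a single orthogonality condition in $\bR^{\binom{[1..n]}{k}}$ using the eigenspace decomposition of the Johnson scheme $J(n,k)$. Write $\bR^{\binom{[1..n]}{k}} = V_0 \oplus V_1 \oplus \cdots \oplus V_k$ for this orthogonal decomposition (with $\dim V_l = \mu_l$, and $V_0$ the one-dimensional space of constant functions), and let $\chi_C \in \bR^{\binom{[1..n]}{k}}$ denote the characteristic vector of $C$.

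The first step is to re-express $B'_l$ spectrally. Unpacking the definition
$B'_l = \frac{1}{|C|}\sum_{j=0}^n \frac{\mu_l}{v_j} Q_j(l) B_j$
and substituting the primitive-idempotent identity
$J_l = \frac{1}{\binom{n}{k}} \sum_j q_l(j) D_j$
together with $|C|\, B_j = \chi_C^\top D_j \chi_C$, I expect to obtain
$B'_l = c_l \,\|\pi_l(\chi_C)\|^2$
for a positive constant $c_l$, where $\pi_l$ denotes orthogonal projection onto $V_l$. Consequently $B'_l = 0$ iff $\chi_C \perp V_l$, so the simultaneous condition $B'_1 = B'_2 = \cdots = B'_t = 0$ is equivalent to $\chi_C \perp V_1 \oplus V_2 \oplus \cdots \oplus V_t$.

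The second step is a combinatorial identification of this subspace. I would prove the classical \emph{lift lemma}: for every $t \leq \min(k, n-k)$, the span
$U_t := \mathrm{span}\{\mathbf{1}_{T \subseteq \cdot} : T \in \binom{[1..n]}{t}\}$
of the inclusion indicators coincides with $V_0 \oplus V_1 \oplus \cdots \oplus V_t$. Since $V_0$ consists of constants, this implies $V_1 \oplus \cdots \oplus V_t = U_t \cap V_0^\perp$, and therefore $\chi_C \perp V_1 \oplus \cdots \oplus V_t$ iff the inner product $\langle \chi_C, \mathbf{1}_{T \subseteq \cdot}\rangle$ is the same for all $T \in \binom{[1..n]}{t}$. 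But this inner product equals $\lambda(T) := |\{B \in C : T \subseteq B\}|$, so the condition is precisely that $\lambda(T)$ is constant on $\binom{[1..n]}{t}$, i.e., that $([1..n], C)$ is a $t$-$(n, k, \lambda_t)$ design. Combined with Lemma \ref{lem-designbasicp}, which guarantees the resulting constancy of $\lambda_s$ for all $s \leq t$, both directions follow.

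The main obstacle is the lift lemma, which is a rank assertion: the inclusion matrix $W_{t,k}$ from $\binom{[1..n]}{t}$ to $\binom{[1..n]}{k}$ has full row rank $\binom{n}{t}$ whenever $t \leq \min(k, n-k)$. I would prove it either by exhibiting an explicit one-sided inverse via Möbius inversion over the Boolean lattice, or by induction on $t$ using the raising operator of the Johnson scheme to show that $U_t = U_{t-1} + V_t$ and that this sum is direct. The spectral computation in Step~1 is more routine but requires careful bookkeeping of the normalizations $\mu_l$, $v_j$, and $q_l(j)$ to confirm $c_l > 0$; the edge case $k > n/2$ can be handled by passing to the complementary design $\overline{\bD}$ via Lemma \ref{lem-complementdesign}.
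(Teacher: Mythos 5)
Your proposal is essentially correct, but there is nothing in the paper to compare it against: Theorem~\ref{thm-Dels2} is stated as a quoted result of Delsarte (\cite[Theorem 3.10]{Dels73}, refined in \cite[Ch.~21, Thm.~15]{MS77}) and the authors give no proof of it. What you have written is a sound reconstruction of the classical argument. Your Step~1 works out: since the primitive idempotents $J_l$ are symmetric, idempotent and pairwise annihilating, $J_l$ is the orthogonal projector onto $V_l$, and the duality relation $\mu_l\,Q_j(l)/v_j=q_l(j)$ between the two eigenmatrices turns the paper's definition of $B'_l$ into $B'_l=\binom{n}{k}|C|^{-2}\,\chi_C^{\top}J_l\chi_C=\binom{n}{k}|C|^{-2}\|\pi_l(\chi_C)\|^2$, so $c_l>0$ as you claim. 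You also correctly isolate where the real content lies: the forward implication (design $\Rightarrow$ $B'_l=0$) genuinely needs the full-rank statement for the inclusion matrix $W_{t,k}$ (equivalently $U_t=V_0\oplus\cdots\oplus V_t$, not merely $U_t\subseteq V_0\oplus\cdots\oplus V_t$), whereas the converse needs only the easy inclusion; and the constancy of the $V_0$-component of $\mathbf{1}_{T\subseteq\cdot}$ over $T$ is what converts orthogonality to $U_t\cap V_0^{\perp}$ into constancy of $\lambda(T)$. It is worth contrasting your route with what the paper does around this theorem: in Lemma~\ref{lem-kor2} and Theorem~\ref{thm-delstdesign} the authors re-express $B_i$ and $B'_i$ extrinsically, by embedding $\binom{[1..n]}{k}$ into $\gf(2)^n$ and using Walsh transforms and Krawtchouk orthogonality, rather than working intrinsically with the Johnson-scheme eigenspaces as you do; and their own characterisation (Theorem~\ref{thm-maincharacterisationthm}) of the same objects is proved by elementary block-intersection counting plus induction on $h$, avoiding association-scheme machinery entirely. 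Your approach buys the conceptually cleanest statement (orthogonality of $\chi_C$ to $V_1\oplus\cdots\oplus V_t$), at the price of having to establish Gottlieb's rank theorem for $W_{t,k}$; if you write this up, that lemma and the eigenvalue duality are the two places where the ``routine bookkeeping'' must actually be done.
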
 

This characterisation is not a spectral characterisation. Below we extend it into a spectral 
characterisation. To this end, we need the next lemma. 

\begin{lemma}\label{lem-kor2}
Let $C$ be a nonempty subset of $\binom{[1..n]}{k}$. Then
\begin{eqnarray*}
B_i &=& \frac{1}{2^n|C|}\sum_{w\in\operatorname{GF}(2)^n}
\hat{1}_{C}(w)^2P_{2i}(\wt(w)),\\
B'_i &=& \frac{1}{2^n|C|^2}\sum_{w\in\operatorname{GF}(2)^n}\hat{1}_{C}(w)^2
\sum_{j=0}^n\frac{\mu_i}{v_j} Q_j(i)P_{2j}(\wt(w)).
\end{eqnarray*}
\end{lemma}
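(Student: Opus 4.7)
The plan is to mirror the Fourier argument used for Lemma \ref{lem-kor1}, adapted to the Johnson scheme. The main bridge between the two schemes is the following bookkeeping: identifying each $k$-subset of $[1..n]$ with its characteristic vector in $\gf(2)^n$ via $\varphi^{-1}$, two $k$-subsets $u, v$ satisfy $|u \cap v| = k - i$ if and only if $|u \triangle v| = 2k - 2(k-i) = 2i$, i.e.\ if and only if $\dist(\varphi^{-1}(u), \varphi^{-1}(v)) = 2i$. Therefore
\[
B_i \;=\; \frac{1}{|C|}\,\bigl|\{(u,v) \in C \times C : \dist(u,v) = 2i\}\bigr|,
\]
and we are reduced to a counting problem that lives inside $\gf(2)^n$ rather than inside $\binom{[1..n]}{k}$ alone.

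With that reformulation in hand, I would replay verbatim the Walsh-inversion computation from the proof of Lemma \ref{lem-kor1}, but with the characteristic function of $\binom{[1..n]}{2i}$ replacing the characteristic function of $\binom{[1..n]}{i}$. Writing $1_C(u) 1_C(v) 1_{\binom{[1..n]}{2i}}(u+v)$ and expanding the last factor by Walsh inversion lets the sum over $(u,v)$ factor as the square of a single Walsh transform. Invoking Theorem \ref{thm-july163} to identify
\[
\hat{1}_{\binom{[1..n]}{2i}}(w) \;=\; P_{2i}(\wt(w))
\]
then delivers the first identity.

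For the second identity I would simply insert the formula for $B_j$ just obtained into the defining expression
\[
B'_i \;=\; \frac{1}{|C|}\sum_{j=0}^{n}\frac{\mu_i}{v_j}\,Q_j(i)\,B_j,
\]
and interchange the two summations. Since $\mu_i/v_j$ and $Q_j(i)$ are independent of $w$, the $w$-sum passes through untouched, and the inner $j$-sum is precisely the expression $\sum_{j=0}^n \frac{\mu_i}{v_j} Q_j(i) P_{2j}(\wt(w))$ appearing in the claim.

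The substantive part of the argument is thus the first identity; the second is a mechanical rearrangement. The main point where care is required is the factor of $2$: one must track that the Johnson index $i$ (indexing the relation $|u \cap v| = k - i$) translates to Hamming distance $2i$, not $i$, which is exactly why $P_{2i}$ — and, in the second formula, $P_{2j}$ — appear here whereas $P_k$ appeared in Lemma \ref{lem-kor1}. No new Krawtchouk or Eberlein identities are needed beyond those already recorded in Section 2.
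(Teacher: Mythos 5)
Your proposal is correct and is essentially the paper's own proof: the paper likewise converts the Johnson relation $|u\cap v|=k-i$ into Hamming distance $2i$, runs the same Walsh-inversion computation with $1_{\binom{[1..n]}{2i}}$ in place of $1_{\binom{[1..n]}{i}}$, applies Theorem \ref{thm-july163} to get $P_{2i}(\wt(w))$, and obtains the second identity by substituting into the definition of $B'_i$ and interchanging sums. Your explicit remark about the factor of $2$ is exactly the one nontrivial point, which the paper passes over silently in its first line.
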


\begin{proof}
We have that
\begin{eqnarray*}
B_i &=& \frac{1}{|C|}\sum_{u,v\in \operatorname{GF}(2)^n}1_C(u)1_C(v)1_{{[1..n]\choose 2i}}(u+v)\\
&=&\frac{1}{2^n|C|}\sum_{u,v\in \operatorname{GF}(2)^n}1_C(u)1_C(v)
\sum_{w\in\operatorname{GF}(2)^n}\hat{1}_{{[1..n] \choose 2i}}(w)(-1)^{(u+v)\cdot w}\\
&=&\frac{1}{2^n|C|}\sum_{w\in\operatorname{GF}(2)^n}\hat{1}_{{[1..n]\choose 2i}}(w)
\sum_{u,v\in \operatorname{GF}(2)^n}1_C(u)(-1)^{u\cdot w}1_C(v)(-1)^{v\cdot w}\\
&=&\frac{1}{2^n|C|}\sum_{w\in\operatorname{GF}(2)^n}\hat{1}_{C}(w)^2\hat{1}_{{[1..n]\choose 2i}}(w)\\
&=&\frac{1}{2^n|C|}\sum_{w\in\operatorname{GF}(2)^n}\hat{1}_{C}(w)^2P_{2i}(\wt(w)), 
\end{eqnarray*} 
where the last identity follows from Theorem \ref{thm-july163}. 
It follows that
\begin{align*}
B'_i =\frac{1}{|C|}\sum_{j=0}^n \frac{\mu_i}{v_j}Q_j(i)B_j
=\frac{1}{2^n|C|^2}\sum_{w\in\operatorname{GF}(2)^n}\hat{1}_{C}(w)^2
\sum_{j=0}^n\frac{\mu_i}{v_j} Q_j(i)P_{2j}(\wt(w)).
\end{align*} 
This completes the proof. 
\end{proof} 

The following follows from Lemma \ref{lem-kor2} and Theorem \ref{thm-Dels2} directly. 

\begin{theorem}\label{thm-delstdesign}
The incidence structure $\bD=([1..n], C)$ is a $t$-design 
if and only if 
\begin{eqnarray}\label{eqn-delsartech}
\sum_{w\in\operatorname{GF}(2)^n}\hat{f}_{\mathbb{D}}(w)^2
\sum_{j=0}^n\frac{\mu_i}{v_j} Q_j(i)P_{2j}(\wt(w))=0 
\end{eqnarray}
for all $i\in[1..t]$.
\end{theorem}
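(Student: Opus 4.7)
The plan is to derive Theorem \ref{thm-delstdesign} as a direct consequence of Lemma \ref{lem-kor2} combined with Delsarte's characterisation in Theorem \ref{thm-Dels2}, since all the analytical work has already been done in those two results. No new combinatorial or analytic machinery is needed; the argument is essentially a substitution once the identification between $f_{\bD}$ and $1_C$ is made explicit.

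First, I would observe that the characteristic function $f_{\bD}$ of the incidence structure $\bD = ([1..n], C)$ has support $\{\varphi^{-1}(B) : B \in C\}$ by definition (see (\ref{eqn-supportIncidance})). Under the bijection $\varphi$, this means $f_{\bD}$ and $1_C$ are the same Boolean function on $\gf(2)^n$ once we view $C \subseteq \binom{[1..n]}{k}$ as a subset of $\gf(2)^n$ via characteristic vectors. In particular, $\hat{f}_{\bD}(w) = \hat{1}_C(w)$ for every $w \in \gf(2)^n$, so the two Walsh spectra coincide and the double sum in (\ref{eqn-delsartech}) is literally the one appearing in the second identity of Lemma \ref{lem-kor2}, up to the nonzero prefactor $1/(2^n|C|^2)$.

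Next, I would invoke Lemma \ref{lem-kor2} to rewrite the outer distribution coefficient as
\begin{equation*}
B'_i \;=\; \frac{1}{2^n|C|^2}\sum_{w\in\gf(2)^n}\hat{f}_{\bD}(w)^2 \sum_{j=0}^n \frac{\mu_i}{v_j}\,Q_j(i)\,P_{2j}(\wt(w))
\end{equation*}
for every $i \in [1..n]$. Since $1/(2^n |C|^2)$ is a positive constant, the equation $B'_i = 0$ is equivalent to the vanishing of the double sum on the right-hand side, that is, to the condition (\ref{eqn-delsartech}) for that value of $i$. Applying this equivalence for $i = 1, 2, \ldots, t$ translates the system $B'_1 = B'_2 = \cdots = B'_t = 0$ into the system of spectral identities in (\ref{eqn-delsartech}).

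Finally, by Delsarte's Theorem \ref{thm-Dels2}, the incidence structure $\bD$ is a $t$-design if and only if $B'_1 = B'_2 = \cdots = B'_t = 0$. Chaining these equivalences yields the theorem. There is no genuine obstacle in this step; the only care needed is to make the identification $\hat{f}_{\bD} = \hat{1}_C$ clean and to record that the prefactor in Lemma \ref{lem-kor2} is nonzero (which is automatic, since $C$ is assumed nonempty). The substantive content of Theorem \ref{thm-delstdesign}, namely the replacement of the classical outer distribution $B'_i$ by a Walsh-spectrum expression, is entirely supplied by Lemma \ref{lem-kor2}, whose proof in turn rests on Theorem \ref{thm-july163} to evaluate $\hat{1}_{\binom{[1..n]}{2i}}(w) = P_{2i}(\wt(w))$.
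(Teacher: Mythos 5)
Your proposal is correct and follows exactly the route the paper takes: the paper states that Theorem \ref{thm-delstdesign} ``follows from Lemma \ref{lem-kor2} and Theorem \ref{thm-Dels2} directly,'' which is precisely your chain of equivalences (identify $\hat{f}_{\bD}$ with $\hat{1}_C$, rewrite $B'_i$ via Lemma \ref{lem-kor2}, note the nonzero prefactor, and apply Delsarte's criterion $B'_1=\cdots=B'_t=0$). Your write-up simply makes explicit the identifications the paper leaves implicit.
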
 

Theorem \ref{thm-delstdesign} is another spectral characterisation of combinatorial 
$t$-designs, and is a slightly extended version of Delsarte's characterisation given in 
Theorem \ref{thm-Dels2}. Since the $t$ equations in \eqref{eqn-delsartech} look very complex, 
the characterisation of Theorem \ref{thm-delstdesign} is complex. In contrast, the spectral characterisation in Theorem \ref{thm-maincharacterisationthm} is 
much simpler. Another difference between the two characterisations is that the spectral characterisation of Theorem \ref{thm-delstdesign} does not involve the parameter $k$ 
and $\lambda$ of a $t$-$(n, k, \lambda)$ directly, while the characterisation of Theorem \ref{thm-maincharacterisationthm} does.

\subsection{Relative $t$-designs and Delsarte's Assmus-Mattson Theorem}\label{sec-Dels3}  

In this subsection, we follow the notation of \cite[Chapter 21]{MS77}. 
Let $(X, R):=(\gf(2)^n,R)$ be the Hamming scheme. Then 
\begin{itemize}
\item $P$ is the first eigenmatrix and $(P)_{ij}=P_j(i)$ ($P_j(x)$ is the Krawtchouk polynomial) \cite[p. 657]{MS77}, 
\item $Q$ is the second eigenmatrix and $Q=P$ \cite[p. 657]{MS77},
\item $J_i$'s are primitive idempotent and $(J_i)_{x,y}=\frac{1}{2^n}P_i(\wt(x+y))$ 
      \cite[p. 657]{MS77},
\item $\rank(J_i)={n\choose i}$ \cite[p. 654]{MS77}. 
\end{itemize} 
In this subsection, we identify a vector $x \in \gf(2)^n$ with its support $\support(x)$.  

Let $T=\{1,2,\ldots,t\}$, where $t \leq n$.
A subset $D$ of $\operatorname{GF}(2)^n$ in the Hamming scheme $(X, R)$ is called a \emph{relative $T$-design with respect to $x\in\operatorname{GF}(2)^n$} 
provided that
\begin{eqnarray}\label{eqn-may181}
|D|(BQ)_i\rank(J_i)= \left((2^nJ_i\chi_D)_x\right)^2 \text{ for all } i\in T,
\end{eqnarray}
where $\chi_D$ is the characteristic vector of $D$, i.e., $\chi_D(x)=1$ when $x\in D$,
and $\chi_D(x)=0$ otherwise,  
$B=(B_0,B_1,\ldots,B_n)$ is the distance (i.e., inner) distribution of $D$, $(BQ)_i$ was defined 
in (\ref{eqn-may191}) and will be given in a more specific form below, and the RHS of 
(\ref{eqn-may181}) will be defined below.   

The forgoing definition is derived from Lemma $2.5.1$-$(iii)$ of \cite{BCN}, 
and \eqref{eqn-may181} is reformulated as follows:
$$(BQ)_i=(BP)_i=\sum_{j=0}^nB_j(P)_{ji}=\sum_{j=0}^nP_i(j)B_j=|D|B'_i,$$ 
where $B'_i$ is the outer distribution of $D$, and is given by 
$$ 
B'_i=\frac{1}{|D|^2}\sum_{w\in {[1..n]\choose i}}\hat{1}_D(w)^2, 
$$  
so that the LHS of \eqref{eqn-may181} is 
$$
|D|(BQ)_i\rank(J_i)={n \choose i}\sum_{w\in {[1..n]\choose i}}\hat{1}_D(w)^2.
$$
The RHS of \eqref{eqn-may181} is 
$$
\left((2^nJ_i\chi_D)_x\right)^2
=\left(2^n\sum_{y\in\operatorname{GF}(2)^n}(J_i)_{xy}\chi_D(y)\right)^2
=\left(\sum_{y\in D}P_i(\wt(x+y))\right)^2.
$$ 
Therefore, \eqref{eqn-may181}  is equivalent to 
\begin{eqnarray}\label{eqn-may182}
{n \choose i}\sum_{w\in {[1..n]\choose i}}\hat{1}_D(w)^2=\left(\sum_{y\in D}P_i(\wt(x+y))\right)^2, \ i \in \{1,2, \ldots, t\}.
\end{eqnarray} 

The following theorem is known in the literature and may be derived from 
\cite[Theorem 6.2]{DelsSeid}. Below we provide a direct proof of it using 
Theorem \ref{thm-maincharacterisationthm}. Recall that we identify a vector 
in $\gf(2)^n$ with its support which is a subset of $[1..n]$ throughout 
this section.  

\begin{theorem}\label{thm-may183} 
Let $D$ be a subset of ${[1..n]\choose k}$. Then $D$ is a t-design in the Johnson scheme  
if and only if $D$ is a relative $T$-design in the Hamming scheme with respect to $0$, where $T=\{1,2,\ldots,t\}$. 
\end{theorem}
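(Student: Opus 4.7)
The plan is to translate the relative $T$-design condition \eqref{eqn-may182} at $x=0$ into a statement about the Walsh coefficients $\hat{1}_D(w)$ on each weight shell, and then match it directly against the spectral characterisation in Theorem \ref{thm-maincharacterisationthm}.

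First I would simplify the right-hand side of \eqref{eqn-may182} at $x=0$. Since every $y\in D$ has $\wt(y)=k$, we get $\sum_{y\in D} P_i(\wt(y)) = |D|P_i(k)$, so being a relative $T$-design with respect to $0$ is equivalent to
$$\binom{n}{i}\sum_{w\in \binom{[1..n]}{i}} \hat{1}_D(w)^2 \;=\; |D|^2 P_i(k)^2, \qquad i=1,2,\ldots,t.$$
Next I would compute the first moment of $\hat{1}_D$ on each weight shell. Expanding the definition of $\hat{1}_D$ and interchanging the order of summation,
$$\sum_{w\in \binom{[1..n]}{i}} \hat{1}_D(w) \;=\; \sum_{y\in D}\sum_{w:\wt(w)=i}(-1)^{w\cdot y} \;=\; \sum_{y\in D} P_i(\wt(y)) \;=\; |D| P_i(k),$$
where the second equality is Theorem \ref{thm-july163}. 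Cauchy--Schwarz then yields
$$\binom{n}{i}\sum_{w\in \binom{[1..n]}{i}} \hat{1}_D(w)^2 \;\ge\; \Big(\sum_{w\in \binom{[1..n]}{i}}\hat{1}_D(w)\Big)^2 \;=\; |D|^2 P_i(k)^2,$$
with equality if and only if $\hat{1}_D(w)$ is constant on $\binom{[1..n]}{i}$.

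For the forward direction I would assume $D$ is a $t$-$(n,k,\lambda)$ design. Theorem \ref{thm-maincharacterisationthm} gives that $\hat{1}_D(w)$ is constant on each shell $\binom{[1..n]}{i}$ for $1\le i\le t$, so Cauchy--Schwarz is tight and the displayed identity above is exactly the relative $T$-design condition at $x=0$. The numerical match is then routine using the reciprocity $\binom{n}{k}P_i(k)=\binom{n}{i}P_k(i)$ together with \eqref{eqn-May19b}. For the converse, assume $D$ is a relative $T$-design with respect to $0$: then Cauchy--Schwarz is tight on each shell $\binom{[1..n]}{i}$ with $1\le i\le t$, so $\hat{1}_D(w)$ is constant on that shell, and the first-moment computation forces this constant to be $|D|P_i(k)/\binom{n}{i} = |D|P_k(i)/\binom{n}{k}$. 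Setting $\lambda := |D|\binom{k}{t}/\binom{n}{t}$, this is precisely the value $\lambda P_k(i)/\binom{n-t}{k-t}$ required in \eqref{eqn-necsuff} (again via \eqref{eqn-May19b}), so Theorem \ref{thm-maincharacterisationthm} concludes that $D$ is a $t$-$(n,k,\lambda)$ design.

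The only nontrivial step is the observation that the relative $T$-design condition at $x=0$ is exactly the Cauchy--Schwarz equality case for $\hat{1}_D$ on each weight shell, after the first moment has been computed via Theorem \ref{thm-july163}. Once this reformulation is made, the equivalence with Theorem \ref{thm-maincharacterisationthm} follows immediately, and the only bookkeeping left is the Krawtchouk reciprocity to align the value of $\lambda$ with $|D|$.
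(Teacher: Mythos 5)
Your proof is correct and follows the same skeleton as the paper's: reduce the relative $T$-design condition at $x=0$ to the identity $\binom{n}{i}\sum_{\wt(w)=i}\hat{1}_D(w)^2=(|D|P_i(k))^2$, compute the first moment $\sum_{\wt(w)=i}\hat{1}_D(w)=|D|P_i(k)$ via Theorem \ref{thm-july163}, and observe that the condition is exactly the equality case of Cauchy--Schwarz, i.e.\ constancy of $\hat{1}_D$ on each weight shell. Where you diverge is the last step of the converse: the paper, having obtained constancy on shells $1,\ldots,t$, re-derives the design property by hand, expanding $\hat{1}_D(e_{j_1}+\cdots+e_{j_i})$ in terms of the coordinate sums $\sum_{y\in D}y_{j_1}\cdots y_{j_i}$ and inducting on $i$ to show $D$ is an $i$-design for each $i$. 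You instead pin down the constant value on each shell from the first moment, convert it via the reciprocity $\binom{n}{i}P_k(i)=\binom{n}{k}P_i(k)$ and the identity \eqref{eqn-May19b} into the exact form $\lambda P_k(i)/\binom{n-t}{k-t}$ demanded by \eqref{eqn-necsuff} (including the trivial check at $h=0$), and then invoke the sufficiency half of Theorem \ref{thm-maincharacterisationthm} as a black box. Your route is shorter and avoids duplicating work already done in the proof of Theorem \ref{thm-maincharacterisationthm}; the paper's explicit induction has the minor advantage of being self-contained at that point and of not requiring the precise normalisation of the constant. Both are valid.
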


\begin{proof}
Let $D$ be a $t$-design in the Johnson scheme.   
By the $``$if part" of Theorem \ref{thm-maincharacterisationthm},  
the LHS of \eqref{eqn-may182} is ${n \choose i}^2\hat{1}_D(w)^2$.
Putting $x=0$, the RHS of \eqref{eqn-may182} is 
$$\left(\sum_{y\in D}P_i(\wt(y))\right)^2=\left(|D|\frac{{n\choose i}}{{n\choose k}}P_k(i)\right)^2.$$ 
Thus $D$ is a relative $T$-design in the Hamming scheme with respect to $0$ if and only if  
$\hat{1}_D(w)^2=(\frac{\lambda}{{n-t\choose k-t}}P_k(i))^2$, where $\wt(w)=i$, $(i=1,2,\ldots,t)$.
By the $``$only part" of Theorem \ref{thm-maincharacterisationthm}, this is indeed true. 
Consequently, $D$ is a relative $T$-design in the Hamming scheme with respect to $0$. 

We now prove the conclusion in the other direction. Let $D$ be a relative $T$-design 
in the Hamming scheme with respect to $0$. 
We have then 
\[
{n \choose i}\sum_{w\in {[1..n]\choose i}}\hat{1}_D(w)^2=\left(|D|P_i(k)\right)^2 
\]
for all $i\in[1..t]$. 
By the Cauchy-Schwartz inequality, we have
\begin{eqnarray*}
{n \choose i}\sum_{w\in {[1..n]\choose i}}\hat{1}_D(w)^2
&=& \sum_{w\in {[1..n]\choose i}}1^2\sum_{w\in {[1..n]\choose i}}\hat{1}_D(w)^2\\
&\geq& \left(\sum_{w\in {[1..n]\choose i}}\hat{1}_D(w)\right)^2=\left(|D|P_i(k)\right)^2.
\end{eqnarray*}
By assumption, the equality holds, and so $\hat{1}_D(w)$ is a constant for all $w$ of weight $i$ for $i\in[1..t]$. 

We denote by $e_j$, $j=1,2,\ldots,n$ the standard basis for $\operatorname{GF}(2)^n$.
For $i=1$, we have that
\[
\hat{1}_D(e_j)=\sum_{y\in D}(-1)^{y_j}=\sum_{y\in D}(1-2y_j)=|D|-2\sum_{y\in D}y_j, 
\] 
which is a constant for all $w$ of weight one. This means that
$D$ is a $1$-design. 

For $i=2$, we have that for $j_1\neq j_2$,
\begin{eqnarray*}
\hat{1}_D(e_{j_1}+e_{j_2}) 
&=& \sum_{y\in D}(-1)^{y_{j_1}+y_{j_2}} \\ 
&=& \sum_{y\in D}(1-2y_{j_1})(1-2y_{j_2})\\
&=& |D|-2\sum_{y\in D}(y_{j_1}+y_{j_2})+4\sum_{y\in D}y_{j_1}y_{j_2}, 
\end{eqnarray*} 
which is a constant for all $w$ of weight two. This means that
$D$ is a $2$-design. In this way, we can prove that $D$ is a $t$-design by induction on $i$. 
\end{proof} 

The following is then a corollary of Theorem \ref{thm-may183}. 

\begin{corollary}\label{cor-May231} 
Let $D$ be a subset of ${[1..n]\choose k}$. Then $([1..n], D)$ is a $t$-$(n, k, \lambda)$ 
design if and only if 
\begin{eqnarray}\label{eqn-may231}
{n \choose i}\sum_{w\in {[1..n]\choose i}}\hat{1}_D(w)^2=\left(\sum_{y\in D}P_i(\wt(y))\right)^2, \ i \in \{1,2, \ldots, t\}.
\end{eqnarray}
\end{corollary}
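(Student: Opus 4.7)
The plan is to derive Corollary~\ref{cor-May231} as a direct consequence of Theorem~\ref{thm-may183} together with the explicit form of the relative $T$-design condition stated in equation~(\ref{eqn-may182}). The observation that drives everything is that the relative $T$-design condition with respect to an arbitrary point $x\in\gf(2)^n$ simplifies dramatically when one specialises to $x=\bzero$, because then $\wt(x+y)=\wt(y)$ for every $y\in D$.

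First I would recall that, by Theorem~\ref{thm-may183}, the incidence structure $([1..n],D)$ is a $t$-design in the Johnson scheme if and only if $D$ is a relative $T$-design in the Hamming scheme with respect to $\bzero$, where $T=\{1,2,\ldots,t\}$. Next I would unpack what the latter condition means: by definition (see equation~(\ref{eqn-may182})), $D$ is a relative $T$-design with respect to a fixed $x\in\gf(2)^n$ exactly when
$$
\binom{n}{i}\sum_{w\in\binom{[1..n]}{i}}\hat{1}_D(w)^2=\left(\sum_{y\in D}P_i(\wt(x+y))\right)^2
$$
for all $i\in\{1,2,\ldots,t\}$. Setting $x=\bzero$ turns the right-hand side into $\bigl(\sum_{y\in D}P_i(\wt(y))\bigr)^2$, which is precisely the right-hand side of~(\ref{eqn-may231}). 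Thus~(\ref{eqn-may231}) is equivalent to the condition that $D$ be a relative $T$-design in the Hamming scheme with respect to $\bzero$, and Theorem~\ref{thm-may183} then identifies this with the property that $([1..n],D)$ be a $t$-$(n,k,\lambda)$ design (the parameter $\lambda$ is automatic once $t$-design status is known, since every block has the common size $k$).

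Since the proof is purely a specialisation of previously established identities, I do not anticipate any genuine obstacle; the only point to be careful with is the book-keeping that the relative $T$-design identity is stated for a single reference point $x$, and that choosing $x=\bzero$ is the unique choice which removes the dependence on $x$ from the Krawtchouk argument. No new identity about Krawtchouk polynomials or Walsh spectra needs to be proved: the inner and outer distribution formulas of Lemma~\ref{lem-kor1} already underlie~(\ref{eqn-may182}) via the discussion immediately preceding Theorem~\ref{thm-may183}.
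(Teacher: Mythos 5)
Your proof is correct and follows exactly the route the paper intends: the corollary is obtained by combining Theorem~\ref{thm-may183} with the explicit relative $T$-design condition~(\ref{eqn-may182}) specialised to $x=\bzero$, under which $\wt(x+y)=\wt(y)$. No further comment is needed.
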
  

Corollary \ref{cor-May231} gives the third spectral characterisation of combinatorial 
$t$-$(n, k, \lambda)$ designs, which is different from Theorem \ref{thm-maincharacterisationthm}. It does not involve the parameters $k$ and $\lambda$ directly. 
Clearly, the spectral characterisation of Theorem \ref{thm-maincharacterisationthm} 
is much simpler.

A relative $t$-designs in a $Q$-polynomial association scheme is a relative $T$-design 
in the scheme, where $T=\{1,2 \ldots, t\}$   
\cite[Chapter 2]{BCN}. 
The following is called Delsarte's Assmus-Mattson Theorem (see \cite[Theorem 8.4]{Dels77}  
and \cite[Theorem $2.8.1$ ]{BCN} for information). 

\begin{theorem}[Delsarte's Assmus-Mattson Theorem]\label{thm-Delsarte'sAM} 
Let $Y$ be a $t$-design in a $Q$-polynomial association scheme $(X, R)$,
and assume that $Y_i:= \{y\in Y : (x,y)\in R_i\}$ is nonempty for $s$ nonzero
values of $i$. Then each $Y_i$ is a relative $(t + 1 - s )$-design with respect to $x$.
\end{theorem}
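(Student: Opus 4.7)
The plan is to work in the $Q$-polynomial scheme $(X,R)$ with primitive idempotents $J_0,\ldots,J_n$, using throughout the identity $(J_k)_{uv}=q_k(j)/|X|$ for $(u,v)\in R_j$ and the fact that $J_k$ is the orthogonal projection onto a subspace $V_k$ of dimension $m_k=\rank(J_k)$. Fix the base point $x$ and set $S=\{i\ne 0:Y_i\ne\emptyset\}$, so $|S|=s$. First I would reinterpret the relative-design condition via the reproducing-kernel Cauchy--Schwarz inequality in $V_k$: for any $D\subseteq X$, one has $|X|((J_k\chi_D)_x)^2\le m_k\|J_k\chi_D\|^2$, and $D$ is a relative $\{k\}$-design with respect to $x$ precisely when this becomes an equality, which in turn forces $J_k\chi_D$ to be proportional to the zonal function $J_k e_x$ inside $V_k$.

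Second, I would translate the hypothesis into algebra. Because $Y_i\subseteq R_i(x)$, the evaluation $(J_k\chi_{Y_i})_x=|Y_i|\,q_k(i)/|X|$ is explicit; the $t$-design hypothesis on $Y$ gives $J_k\chi_Y=0$ (equivalently $(BQ)_k=0$) for $k=1,\ldots,t$, so $\sum_{i\in S}J_k\chi_{Y_i}=0$ for these $k$. This yields $t$ linear relations among the $s$ vectors $J_k\chi_{Y_i}\in V_k$, while their known $x$-values are degree-$k$ polynomial evaluations $v_k(z_i^*)$ in the $Q$-polynomial parameter, with $z_i^*$ distinct for $i\in S$.

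The heart of the argument -- and the main obstacle -- is to show that for $k\le t+1-s$ the Cauchy--Schwarz inequality above saturates for each individual slice $Y_i$. My attempt would introduce the nonnegative defect $\delta_{k,i}=\|J_k\chi_{Y_i}\|^2 - |X|((J_k\chi_{Y_i})_x)^2/m_k\ge 0$ and then pair it against polynomial test functions supported on $S$: the $t$ moment vanishings coming from the $t$-design condition, combined with the $s$-element support of the nonzero $|Y_j|$'s, should permit a Vandermonde/interpolation extraction showing $\sum_{i\in S}\delta_{k,i}=0$ for each $k\le t+1-s$, after which nonnegativity forces each $\delta_{k,i}=0$ and hence the desired relative $(t+1-s)$-design condition. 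The delicate bookkeeping of polynomial degrees, multiplicities $m_k$, and normalisations (so that the threshold $t+1-s$ emerges exactly, rather than $t-s$ or $t-s+2$) is where this plan is most likely to require extra care, and is the step I regard as the technical core of the proof.
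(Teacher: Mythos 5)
First, a point of reference: the paper does not prove this theorem. It is quoted from Delsarte (1977, Theorem 8.4) and Brouwer--Cohen--Neumaier (Theorem 2.8.1), so there is no in-paper proof to compare against; your proposal has to stand on its own. Its first two steps do. The Cauchy--Schwarz reformulation is exactly right: $D$ is a relative $\{k\}$-design with respect to $x$ if and only if $J_k\chi_D$ is proportional to $J_ke_x$, equivalently the nonnegative defect $\delta_k(D)=\|J_k\chi_D\|^2-|X|\bigl((J_k\chi_D)_x\bigr)^2/m_k$ vanishes, where $m_k=\rank(J_k)$. The translation of the $t$-design hypothesis into $J_k\chi_Y=0$ for $1\le k\le t$, hence $\sum_{i\in S}J_k\chi_{Y_i}=-J_k\chi_{Y_0}$, is also correct.

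The gap is in your third step, which is the entire content of the theorem. For a fixed $k$ the design hypothesis gives you exactly one linear relation among the $s$ vectors $J_k\chi_{Y_i}$, $i\in S$: writing $u_{k,i}$ for the component of $J_k\chi_{Y_i}$ orthogonal to $J_ke_x$, you obtain $\sum_{i\in S}u_{k,i}=0$. But $\sum_{i}\delta_{k,i}=\sum_i\|u_{k,i}\|^2$ is not $\|\sum_iu_{k,i}\|^2$ --- the cross terms $\langle u_{k,i},u_{k,j}\rangle$ are not controlled by the moment conditions --- so the claimed identity $\sum_{i\in S}\delta_{k,i}=0$ does not follow, and nonnegativity cannot be invoked. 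A Vandermonde extraction in the index $i$ cannot rescue this for fixed $k$, because the relations you possess are indexed by $k$, not by $i$; to get $s$ independent relations isolating each $Y_j$ you must couple different values of $k$, and that is exactly where the $Q$-polynomial hypothesis must enter (your outline never actually uses it). The standard proof supplies two ingredients absent from your plan: (i) the Lagrange polynomial $F(z)=\prod_{i\in S\setminus\{j\}}(z-z_i^*)$ of degree $s-1$, applied entrywise via $y\mapsto F(z^*_{\partial(x,y)})$ to isolate $\chi_{Y_j}$ (up to $\chi_{Y_0}$, already a multiple of $e_x$) from $\chi_Y$ --- this function lies in the span of the rows $(J_0)_{x,\cdot},\dots,(J_{s-1})_{x,\cdot}$ precisely because the scheme is $Q$-polynomial; and (ii) the design-orthogonality identity $J_k\Delta_YJ_l=\frac{|Y|}{|X|}\,\delta_{kl}J_k$ for $k+l\le t$, where $\Delta_Y$ is the diagonal matrix of $\chi_Y$. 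Combining them with $l\le s-1$ forces $k\le t-(s-1)=t+1-s$ and yields $F(z_j^*)\,J_k\chi_{Y_j}+F(z_0^*)\,J_k\chi_{Y_0}=|Y|\,c_k\,J_ke_x$, so each $J_k\chi_{Y_j}$ is individually proportional to $J_ke_x$; there is no intermediate stage where only the sum of the defects is known to vanish. Without (i) and (ii) the argument does not close.
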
 

Note that Theorem \ref{thm-Delsarte'sAM} gives only a sufficient condition for  
relative $t$-designs, while Theorem \ref{thm-maincharacterisationthm} of this 
paper presents 
a necessary and sufficient condition for combinatorial $t$-designs. Thus, it is   
impossible to derive Theorem \ref{thm-maincharacterisationthm} from Delsarte's 
Assmus-Mattson Theorem (i.e., Theorem \ref{thm-Delsarte'sAM}). In particular,  
it is impossible for Theorem \ref{thm-maincharacterisationthm} to be a special 
case of  Delsarte's Assmus-Mattson Theorem.


\begin{thebibliography}{99} 

\bibitem{Alltop} W. O. Alltop, Extending $t$-designs, J. Comb. Theory A 18 (1975) 177--186. 

\bibitem{AK92} E. F. Assmus Jr., J. D. Key, Designs and Their Codes, Cambridge University 
Press, Cambridge, 1992.   

\bibitem{BCN} A. E. Brouwer, A. M. Cohen and A. Neumaier,
Distance-Regular Graphs,
Springer-Verlag, Berlin Heidelberg, 1989. 

\bibitem{BJL} T. Beth, D. Jungnickel, H. Lenz, Design Theory, Cambridge University 
Press, Cambridge, 1999.    


\bibitem{CD07} 
C. Carlet and C. Ding, Nonlinearities of S-boxes, Finite Fields and Their Applications 13 (2007) 121--135. 

\bibitem{CH17}
S. Chang and J. Y. Hyun, ``Linear codes from simplicial complexes,'' \textit{Des. Codes Cryptogr.,} https://doi.org/10.1007/s10623-017-0442-5.

\bibitem{Ding163} 
C. Ding, A construction of binary linear codes from Boolean functions, Discrete Mathematics 339 (2016) 2288--2303.

\bibitem{Dels73} 
P. Delsarte, An algebraic approach to the association schemes of coding theory, Philips Res. Rep. Suppl. 10 (1973) 1--97.  

\bibitem{Dels77} 
P. Delsarte, Pairs of vectors in the space of an association scheme, Philips Res. Rep. 32 (1977) 373--411.     

\bibitem{DelsSeid} 
P. Delsarte, J. J. Seidel, Fisher type inequalities for Euclidean $t$-designs, 
Linear Algebra and Its Applications 114/115 (1989) 213--230.    

\bibitem{HBComb} 
C. J. Colbourn, J. H. Dinitz, Handbook of Combinatorial Designs, 2nd Edition, CRC Press, 
New York, 2007.   

\bibitem{HP03} W. C. Huffman, V. Pless, Fundamentals of Error-Correcting Codes,   
Cambridge University Press, Cambridge, 2003. 


\bibitem{MS77} F. J. MacWilliams, N. J. A. Sloane, The Theory of Error-Correcting Codes,  
North-Holland, Amsterdam,  1977.   



\bibitem{Teirlinck} 
L. Teirlinck, Nontrivial $t$-designs without repeated blocks exist for all $t$, 
Discrete Math. 65(3) (1987) 301--311. 

\bibitem{Tonchev}  V. D. Tonchev, Codes and designs, in: V. S. Pless, W. C. Huffman, (Eds.),   
Handbook of Coding Theory, Vol. II, Elsevier, Amsterdam, 1998, pp. 1229--1268.   

\bibitem{Tonchevhb} V. D. Tonchev, Codes, in: C. J. Colbourn, J. H. Dinitz, (Eds.),   
Handbook of Combinatorial Designs, 2nd Edition, CRC Press, New York, 2007, pp. 677--701.    

\bibitem{Japanese} 
T. Wadayama, T. Hada, K. Wakasugi, M. Kasahara, Upper and lower bounds on maximum
nonlinearity of $n$-input m-output Boolean function, Designs, Codes Cryptography 23 (2001)
23--33. 

\bibitem{Wilson72a} 
R. M. Wilson, An existence theory for pairwise balanced designs I. Composition theorems and morphisms, J. Combinatorial Theory Ser. A 13 (1972) 220--245. 

\bibitem{Wilson72b} 
R. M. Wilson, An existence theory for pairwise balanced designs II. The structure of PBD-closed sets and the existence conjectures, J. Combinatorial Theory Ser. A 13 (1972) 246--273. 

\bibitem{Wilson75} 
R. M. Wilson, An existence theory for pairwise balanced designs III. Proof of the existence conjectures, J. Combinatorial Theory Ser. A 18 (1975) 71--79.


\end{thebibliography}
\end{document}